\def\ps@pprintTitle{%
 \let\@oddhead\@empty
 \let\@evenhead\@empty
 \def\@oddfoot{\centerline{\thepage}}%
 \let\@evenfoot\@oddfoot}
\patchcmd{\thebibliography}{\clubpenalty4000}{\clubpenalty10000}{}{}
\patchcmd{\thebibliography}{\widowpenalty4000}{\clubpenalty10000}{}{}
\newtheorem{theorem}{Theorem}[section]
\newtheorem{proposition}[theorem]{Proposition}
\newtheorem{lemma}[theorem]{Lemma}
\newtheorem{corollary}[theorem]{Corollary}
\newtheorem{definition}[theorem]{Definition}
\newtheorem{remark}[theorem]{Remark}
\newtheorem*{notation*}{Notation}
\newcommand{\N}{\mathbb{N}}
\newcommand{\R}{\mathbb{R}}
\newcommand{\PP}{\mathbb{P}}
\newcommand{\EE}{\mathbb{E}}
\newcommand{\VV}{\mathbb{V}\mathrm{ar}}
\newcommand{\Xn}{\mathbf{X}_n}
\newcommand{\nvert}[0]{\, \vert \, }
\newcommand{\bb}[1]{\boldsymbol{#1}}
\newcommand{\ind}[1]{\mathds{#1}}
\newcommand{\rd}{{\rm d}}
\newcommand{\leqdef}{\vcentcolon=}
\newcommand{\reqdef}{=\vcentcolon}
\newcommand{\e}{\varepsilon}
\newcommand{\cone}{c_{1\hspace{-0.3mm},\hspace{-0.3mm}\lambda}}
\newcommand{\aone}{\alpha_{\hspace{-0.3mm}1\hspace{-0.3mm},\hspace{-0.3mm}\lambda}}
\newcommand{\ctwo}{c_{2\hspace{-0.3mm},\hspace{-0.3mm}\lambda}}
\newcommand{\atwo}{\alpha_{\hspace{-0.3mm}2\hspace{-0.3mm},\hspace{-0.3mm}\lambda}}
\newcommand{\cthree}{c_{3\hspace{-0.3mm},\hspace{-0.3mm}\lambda}}
\newcommand{\athree}{\alpha_{\hspace{-0.3mm}3\hspace{-0.3mm},\hspace{-0.3mm}\lambda}}
\newcommand{\cfour}{c_{4\hspace{-0.3mm},\hspace{-0.3mm}\lambda}}
\newcommand{\afour}{\alpha_{\hspace{-0.3mm}4\hspace{-0.3mm},\hspace{-0.3mm}\lambda}}
\newcommand{\Vone}{V_{1\hspace{-0.2mm},\hspace{-0.2mm}\lambda}}
\newcommand{\Vtwo}{V_{2\hspace{-0.2mm},\hspace{-0.2mm}\lambda}}
\begin{document}

\vspace{-5cm}
    \noindent
    \fbox{
    \begin{minipage}{41em}
        \small
        This manuscript was accepted for publication in Statistics (Taylor \& Francis).
        This version {\it may differ} from the published version (\href{https://doi.org/10.1080/02331888.2022.2144859}{doi:10.1080/02331888.2022.2144859}) in typographic details.
    \end{minipage}
    }

    \vspace{7mm}

\begin{frontmatter}

\title{Goodness-of-fit tests for Laplace, Gaussian and exponential power distributions based on $\lambda$-th power skewness and kurtosis}

\author[a1]{Alain Desgagn\'e\corref{cor1}}
\cortext[cor1]{Corresponding author.}
\ead{desgagne.alain@uqam.ca}
\address[a1]{D\'epartement de Math\'ematiques, Universit\'e du Qu\'ebec \`a Montr\'eal, Montr\'eal, Canada.}

\author[a2,a3,a4,a5]{Pierre Lafaye de Micheaux}
\address[a2]{AMIS, Universit\'{e} Paul-\!Val\'{e}ry Montpellier 3, Montpellier, France.}
\address[a3]{PreMeDICaL - Precision Medicine by Data Integration and Causal Learning, Inria Sophia Antipolis, France.}
\address[a4]{Desbrest Institute of Epidemiology and Public Health, Universit\'e de Montpellier, Montpellier, France.}
\address[a5]{School of Mathematics and Statistics, UNSW Sydney, NSW, Australia.}

\author[a6,a7,a8]{Fr\'ed\'eric Ouimet}
\address[a6]{Division of Physics, Mathematics and Astronomy, California Institute of Technology, Pasadena, USA.}
\address[a7]{Department of Mathematics and Statistics, McGill University, Montreal, Canada.}
\address[a8]{Centre de recherches math\'ematiques, Universit\'e de Montr\'eal, Montr\'eal, Canada.}

\begin{abstract}
    Temperature data, like many other measurements in quantitative fields, are usually modeled using a normal distribution. However, some distributions can offer a better fit while avoiding underestimation of tail event probabilities. To this point, we extend Pearson's notions of skewness and kurtosis to build a powerful family of goodness-of-fit tests based on Rao's score for the exponential power distribution $\mathrm{EPD}_{\lambda}(\mu,\sigma)$, including tests for normality and Laplacity when $\lambda$ is set to 1 or 2. We find the asymptotic distribution of our test statistic, which is the sum of the squares of two $Z$-scores,
     %$X^{\mathrm{APD}}_{\lambda}\leqdef Z^2(S_{\lambda}) + Z^2(K^{\mathrm{net}}_{\lambda})$
     under the null and under local alternatives. We also develop an innovative regression strategy to obtain $Z$-scores
     %, $Z(S_{\lambda})$ and $Z(K_{\lambda}^{\text{net}})$,
     that are nearly independent and distributed as standard Gaussians, resulting in a $\chi_2^2$ distribution valid for any sample size (up to very high precision for $n\geq 20$).
     %These two components can even be used as directional tests against asymmetric and symmetric alternatives, respectively.
     The case $\lambda=1$ leads to a powerful test of fit for the Laplace($\mu,\sigma$) distribution, whose empirical power is superior to all $39$ competitors in the literature, over a wide range of $400$ alternatives. Theoretical proofs in this case are particularly challenging and substantial. We applied our tests to three temperature datasets. The new tests are implemented in the \texttt{R} package \texttt{PoweR}.
\end{abstract}

\begin{keyword}
Asymmetric power distribution \sep Lagrange multiplier test \sep Local alternatives \sep Power analysis  \sep Rao's score test \sep Temperature data  \MSC[2020]{Primary: 62F03; Secondary: 62E20 \sep 62F12 \sep 60F05}
\end{keyword}

\end{frontmatter}

\section{Introduction}\label{sec:intro}

In many fields of application, the Gaussian distribution is the primary choice to model a symmetric dataset. However, further analysis using for example a QQ-plot often reveals more (or sometimes fewer) extreme values than expected in a Gaussian model. This is problematic because it can lead to a poor estimation of the probability of extreme events. The exponential power distribution $\mathrm{EPD}_{\lambda}(\mu,\sigma)$, whose density function is proportional to $\exp(-|(x-\mu)/\sigma|^\lambda/\lambda)$, is then an interesting alternative thanks to its wide range of tails' behaviour. In particular, the Laplace distribution ($\lambda=1$), with its heavier tails, is a popular alternative to the Gaussian distribution ($\lambda=2$) for modelling observations that show a higher rate of extreme values. Goodness-of-fit tests for normality, Laplacity or in general for the $\mathrm{EPD}_{\lambda}(\mu,\sigma)$ with a fixed value of $\lambda$, are then necessary to assess the validity of the chosen model.

Our contribution in this paper is to propose a unified testing approach with a family of goodness-of-fit tests for the exponential power distribution $\mathrm{EPD}_{\lambda}(\mu,\sigma)$ with $\lambda\geq 1$ and unknown location and scale parameters $\mu$ and $\sigma$, including tests for the Laplace and Gaussian distributions if $\lambda$ is set to 1 or 2, respectively. Specifically, we use Rao's score test \citep{Rao1948} -- also known as Lagrange multiplier test -- on the asymmetric power distribution (APD) introduced by \cite{Komunjer2007}. Asymptotically, we obtain tests that are equivalent to the Wald and likelihood ratio tests, which are the most powerful for small deviations. Since the APD family combines the wide range of exponential tail behaviours provided by the EPD family with different levels of asymmetry, the resulting tests are based on two asymptotically independent measures that we named `$\lambda$-th-power skewness' and `$\lambda$-th-power kurtosis', similar to the normality test of \cite{Jarque_Bera_1987} or the moment-based tests of \cite{Natarajan2004} for the inverse Gaussian.

We go further by adapting our asymptotic tests for all sample sizes and $\lambda\geq 1$, using simulation and regression techniques. It is well known that skewed distributions are often associated with heavy tails for small samples. We therefore designed the `$\lambda$-th-power net kurtosis' to replace the $\lambda$-th-power kurtosis, the former being virtually independent of the $\lambda$-th-power skewness for all sample sizes. This is a crucial step in building powerful tests. These two measures are then transformed into $Z$-scores, which are very close to being normally distributed under the null hypothesis and could be used as specific test statistics against symmetric and asymmetric alternatives, respectively.  We obtain omnibus test statistics by adding the square of the two $Z$-scores, which is very close to being $\chi_2^2$ distributed under the null hypothesis. Theoretical results are also provided, showing that the asymptotic distribution of the omnibus test statistics is a chi-square with two degrees of freedom ($\chi_2^2$) under the null hypothesis and a noncentral $\chi_2^2$ under local alternatives. The proofs, given in the Supplementary Material B, are particularly challenging and substantial.

Our family of tests has characteristics that stand out in several respects. First, since our tests are built from two nearly independent components for all sample sizes, the amount of information in the test statistics is maximized. This results in powerful omnibus tests. In particular, in \cite{Laplace_tests_compilation}, we performed a comprehensive empirical power comparison of 40 goodness-of-fit tests for the univariate Laplace distribution against 400 alternatives and our Laplace omnibus test performed best. Second, the distribution of our test statistics under the null hypothesis can be approximated very closely, for all sample sizes (up to very high precision for $n\geq 20$), either by a $\chi_2^2$ for our omnibus tests or by the $\mathcal{N}(0,1)$ for the $Z$-scores. As a result, we obtain turnkey goodness-of-fit tests that allow accurate and easy calculation of critical values and \textit{p}-values without the need to rely on simulated quantiles or tables, which can certainly facilitate their acceptance and implementation. Note that this is a rare feature in the Laplace test literature. Third, thanks to the $\lambda$-th-power skewness and net kurtosis -- which extend Pearson's notions of skewness and kurtosis --, the rejection of the null hypothesis is accompanied with a direct interpretation: the distribution can be skewed to the right or to the left, exhibit heavy or light tails. Fourth, when we know that the distribution of the random variable is symmetric, we can take advantage of this information to increase power. To achieve this, we propose to use the $Z$-score based on the $\lambda$-th-power net kurtosis as a test statistic against symmetric alternatives.

In this paper, we focus on applications to temperature data. The Gaussian distribution is used for example by \cite{Allen_1996} for human body temperatures, by \cite{Chamberlain_et_al_1995} for ear temperatures measured using an infrared emission detection thermometer, by \cite{Issautier_et_al_1998} for core electron temperatures, and by \cite{Pardo_et_al_2017} for sea surface temperatures, while the Gaussian model is questioned by \cite{DeWitt_Friedman_1979} in the context of body temperatures of ectotherms and by \cite{Schoenau_Kehrig_1990} for heating or cooling degree days in buildings. In Section~\ref{sec:example}, we analyse a temperature dataset containing errors defined as the difference between observed ocean surface temperatures and forecasted values given by a geostatistical model \citep{Gel_et_al_2007}. We show that the Laplace distribution, and more so the $\mathrm{EPD}_{1.5}(\mu,\sigma)$, is a better model than the Gaussian distribution. We also study the Gaussian and Laplace modelling of a large dataset ($n=2,037$) of domestic refrigerator temperatures collected as part of a study to help consumers reduce bacterial growth and ensure the quality and safety of food products stored at home for U.S.\ households \citep{Kosa_et_al_2007}. We finally investigate a London temperature time series using a moving-average model with Gaussian and Laplace innovations \citep{Piggott_1980,Shea_1987}.

The remainder of the paper is structured as follows. In Section~\ref{sec:preliminaries}, we define reparametrized versions of the EPD and APD families, the null and alternative hypotheses of our tests, $\lambda$-th-power skewness, kurtosis and net kurtosis. In Section~\ref{sec:asymp.dist.H0}, we present our family of goodness-of-fit tests for the EPD. We establish the asymptotic test statistics and then adapt them for all sample sizes using simulation and regression techniques. Their asymptotic distributions under the null hypothesis are also given. In Section~\ref{sec:asymp.dist.H1}, we present the asymptotic distributions of the test statistics under local alternatives, which allows us to obtain asymptotic power curves. Specific cases of goodness-of-fit tests for Laplace and Gaussian distributions are presented in Section~\ref{sec:Laplace.Gaussian}. In Section~\ref{sec:example}, as described above, we investigate Laplace, Gaussian and EPD modelling of three temperature datasets using our family of tests. Section~\ref{sec:empirical.power.analysis} gathers the results of an extensive empirical power comparison of goodness-of-fit tests for the Laplace distributions. The conclusion follows in Section~\ref{sec:conclusion}. All the detailed programming codes using the \texttt{R} software and all the proofs are given in the Supplementary Material.

\section{Background and preliminaries}\label{sec:preliminaries}

\subsection{Family of asymmetric power distributions}\label{sec:APD}

The asymmetric power distribution (APD) introduced by \cite{Komunjer2007} is a generalization of the exponential power distribution (EPD) \cite[p.~271]{Kotz_2001}, the latter also being known as the generalized error distribution or the generalized normal distribution \citep{MR2119411}. The APD family is broader in that it combines the wide range of exponential tail behaviours provided by the symmetric EPD with different levels of asymmetry. Below we propose a modified version of the original APD density function $f(u)$ defined in \citet[Section~2]{Komunjer2007} by modifying its original scaling through the change of variable $u = \lambda^{-1/\theta_2} x$ and by adding location and scale parameters. In our context, $\lambda$ is a fixed parameter chosen by the user.

\begin{definition}\label{def:APD}
A random variable $X$ is said to be $\mathrm{APD}_{\lambda}(\theta_1,\theta_2,\mu,\sigma)$ distributed if its density function is given by
\begin{equation}\label{eq:APD.density}
    f_{\lambda}(x \nvert \theta_1,\theta_2,\mu,\sigma) \leqdef
    \frac{(\delta_{\hspace{-0.2mm}\theta_1\hspace{-0.5mm},\theta_2}/\lambda)^{1/\theta_2}}{\sigma\Gamma(1 + 1/\theta_2)}
    \exp\left(-\frac{1}{\lambda}\frac{\delta_{\hspace{-0.2mm}\theta_1\hspace{-0.5mm},\theta_2}}
    {A_{\hspace{-0.1mm}\theta_1\hspace{-0.5mm},\hspace{-0.2mm}\theta_2}\hspace{-0.6mm}(y)}|y|^{\theta_2}\right),\quad x\in\R,
\end{equation}
where $y\leqdef \sigma^{-1}(x-\mu)\in \R$, $\theta_1\in(0,1)$ is the asymmetry parameter, $\theta_2\in(0,\infty)$ is the tail decay parameter, $\mu\in\R$ is the location parameter, $\sigma>0$ is the scale parameter, $\lambda\in(0,\infty)$,
\begin{equation}
    \delta_{\hspace{-0.2mm}\theta_1\hspace{-0.5mm},\theta_2} \leqdef \frac{2 \theta_1^{\theta_2}
    (1 - \theta_1)^{\theta_2}}{\theta_1^{\theta_2} + (1 - \theta_1)^{\theta_2}}\in(0,1)\quad \text{and}
    \quad A_{\hspace{-0.1mm}\theta_1\hspace{-0.5mm},\hspace{-0.2mm}\theta_2}\hspace{-0.6mm}(y) \leqdef \big[1/2 + \mathrm{sign}(y) (1/2 - \theta_1)\big]^{\theta_2}=
    \begin{cases}
        \theta_1^{\theta_2},& \mbox{if } y<0, \\
        (1-\theta_1)^{\theta_2},& \mbox{if } y>0.
    \end{cases}
\end{equation}
\end{definition}

By integrating \eqref{eq:APD.density}, one can easily verify that the cumulative distribution function (c.d.f.) is given by
\begin{equation}\label{eq:cdf.APD}
    F_{\lambda}(x \nvert \theta_1,\theta_2,\mu,\sigma) \leqdef \theta_1
    \left[1 - F_W\bigg(\frac{\delta_{\hspace{-0.2mm}\theta_1\hspace{-0.5mm},\theta_2}}{\lambda}
    \Big(\frac{\max(-y,0)}{\theta_1}\Big)^{\theta_2}\bigg)\right] + (1 - \theta_1) \,
    F_W\bigg(\frac{\delta_{\hspace{-0.2mm}\theta_1\hspace{-0.5mm},\theta_2}}{\lambda}
    \Big(\frac{\max(y,0)}{1 - \theta_1}\Big)^{\theta_2}\bigg),
\end{equation}
%\begin{equation}\label{eq:cdf.APD}
%    F_{\lambda}(x \nvert \theta_1,\theta_2,\mu,\sigma) \leqdef \theta_1
%    \left[1 - F_W\bigg(\frac{\delta_{\hspace{-0.2mm}\theta_1\hspace{-0.5mm},\theta_2}}{\lambda}
%    \Big(\frac{- (y \wedge 0)}{\theta_1}\Big)^{\theta_2}\bigg)\right] + (1 - \theta_1) \,
%    F_W\bigg(\frac{\delta_{\hspace{-0.2mm}\theta_1\hspace{-0.5mm},\theta_2}}{\lambda}
%    \Big(\frac{y \vee 0}{1 - \theta_1}\Big)^{\theta_2}\bigg),
%\end{equation}
which makes it possible to generate observations $X\sim \mathrm{APD}_{\lambda}(\theta_1,\theta_2,\mu,\sigma)$ (see the proof in the Supplementary Material, Section~B.1) by taking
%\begin{equation}\label{eq:how.to.simul.APD}
%    X = \mu + \sigma \bigg\{- \theta_1 \Big(\frac{\lambda W}{\delta_{\hspace{-0.2mm}\theta_1\hspace{-0.5mm},\theta_2}}\Big)^{1/\theta_2} \ind{1}_{\{U \leq \theta_1\}}
%    + (1 - \theta_1) \Big(\frac{\lambda W}{\delta_{\hspace{-0.2mm}\theta_1\hspace{-0.5mm},\theta_2}}\Big)^{1/\theta_2} \ind{1}_{\{U > \theta_1\}}\bigg\},
%\end{equation}
\begin{equation}\label{eq:how.to.simul.APD}
    X = \mu + \sigma \big(\delta_{\hspace{-0.2mm}\theta_1\hspace{-0.5mm},\theta_2}^{-1}\lambda W\big)^{1/\theta_2}
    \big((1 - \theta_1)(1-V) - \theta_1  V\big),
\end{equation}
where $W\sim \text{Gamma}\hspace{0.2mm}(1/\theta_2,1)$ and $V\sim\text{Bernoulli}\hspace{0.2mm}(\theta_1)$ in \eqref{eq:cdf.APD} and \eqref{eq:how.to.simul.APD} are independent.
\begin{figure}[H]
\begin{center}
\begin{tabular}{c}
    \includegraphics[width=7cm]{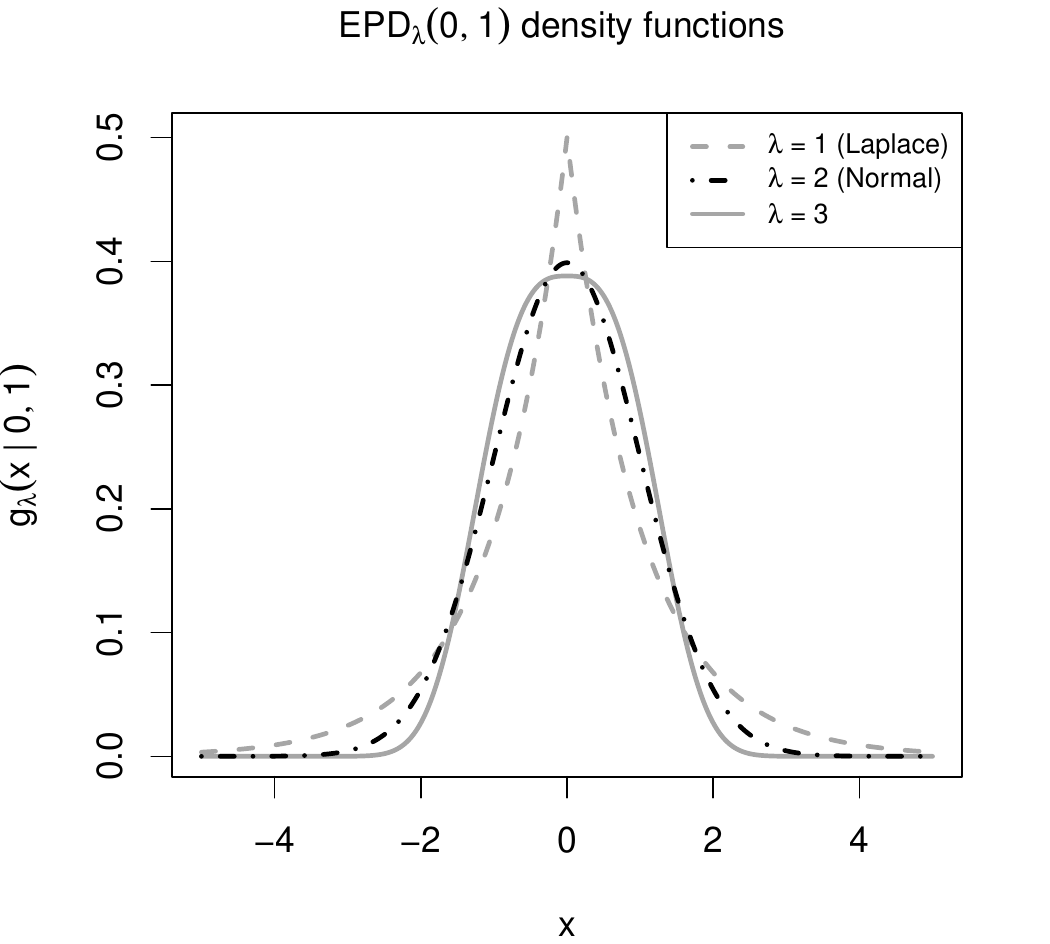}
\end{tabular}
\end{center}
\captionsetup{width=0.8\linewidth}
\vspace{-6mm}
\caption{The $\mathrm{EPD}_{\lambda}(\mu=0,\sigma=1)$ density functions for values of $\lambda\in\{1, 2, 3\}$.}\label{fig:EPD}
\end{figure}

\subsection{The null and alternative hypotheses}\label{sec:hypotheses}

We now consider particular cases of the $\mathrm{APD}_{\lambda}(\theta_1,\theta_2,\mu,\sigma)$ which will form the null and alternative hypotheses of the tests. First, we define the distribution to be tested in the null hypothesis.
\begin{definition}\label{def:EPD}
A random variable $X$ is said to be $\mathrm{EPD}_{\lambda}(\mu,\sigma)\leqdef \mathrm{APD}_{\lambda}(1/2,\lambda,\mu,\sigma)$ distributed if its density function is given by
\begin{equation}\label{eq:EPD.density}
    g_{\lambda}(x \nvert \mu,\sigma)\leqdef f_{\lambda}(x \nvert 1/2,\lambda,\mu,\sigma)= \frac{1}{2\sigma  \lambda^{1/\lambda} \Gamma(1 + 1/\lambda)}
    \exp\left(-\frac{1}{\lambda} |y|^{\lambda}\right),\quad x\in\R,
\end{equation}
where $y\leqdef \sigma^{-1}(x-\mu)\in \R$, $\lambda\in(0,\infty)$ is the tail decay parameter, $\mu\in\R$ is the location parameter and $\sigma>0$ is the scale parameter.
\end{definition}

In particular, we obtain the $\mathrm{EPD}_{1}(\mu,\sigma)=\text{Laplace}(\mu,\sigma)$ distribution if $\lambda=1$ and the $\mathrm{EPD}_{2}(\mu,\sigma)=\mathcal{N}(\mu,\sigma^2)$ distribution if $\lambda=2$, as illustrated in Figure~\ref{fig:EPD}.

We want to construct goodness-of-fit tests for specific $\mathrm{EPD}_{\lambda}(\mu,\sigma)$, with a fixed value of $\lambda\ge 1$. In particular, the values of $\lambda=1$ and $\lambda=2$ lead to tests for Laplace and Gaussian distributions. If the random sample is denoted by
\begin{equation}\label{not:sample}
    \Xn \leqdef X_1,X_2,\ldots,X_n,
\end{equation}
then, for a fixed value of $\lambda\ge 1$, the composite null hypothesis for all our tests is given by
\begin{equation}\label{eq:H0}
    H_0 : \Xn \sim\mathrm{EPD}_{\lambda}(\mu,\sigma), \text{ where } \mu \text{ and } \sigma \text{ are unknown.}
\end{equation}

The alternative hypothesis will depend on the type of test. Consider an omnibus test for the $\mathrm{EPD}_{\lambda}(\mu,\sigma)$ designed to detect all types of alternatives, whether asymmetrical or symmetrical with heavy or light tails. The alternative hypothesis is then given by
\begin{equation}\label{eq:H1.omnibus}
    H_1 : \Xn \not\sim \mathrm{EPD}_{\lambda}(\mu,\sigma),\text{ where } \mu \text{ and } \sigma \text{ are unknown.}
\end{equation}
However, our strategy is to approximate as many existing distributions as possible by the large APD family. Therefore, we use this alternative hypothesis instead:
\begin{equation}\label{eq:H1.quasi.omnibus}
    H_1 : \Xn\sim\mathrm{APD}_{\lambda}(\theta_1,\theta_2,\mu,\sigma),\text{ where }(\theta_1,\theta_2)\neq (1/2,\lambda), \text{ and }
    \mu \text{ and } \sigma \text{ are unknown.}
\end{equation}

When specific information about the distribution of the random variable is known or assumed, we can take advantage of it to construct a more powerful directional test by narrowing the universe of the alternative hypothesis. Consider a directional test for the $\mathrm{EPD}_{\lambda}(\mu,\sigma)$ designed to detect asymmetric alternatives. For a fixed value of $\lambda\ge 1$, the alternative hypothesis is then given by
\begin{equation}\label{eq:H1.asym}
    H_1 : \Xn \sim \mathrm{APD}_{\lambda}(\theta_1,\lambda,\mu,\sigma),\text{ where }\theta_1\neq 1/2, \text{ and } \mu \text{ and } \sigma \text{ are unknown.}
\end{equation}
%where the density of the $\mathrm{APD}_{\lambda}(\theta_1,\lambda,\mu,\sigma)$ is given by
%\begin{equation}\label{eq:APD.asym.density}
%    f_{\lambda}(x \nvert \theta_1,\lambda,\mu,\sigma) \leqdef \frac{(\delta_{\hspace{-0.2mm}\theta_1\hspace{-0.5mm},\hspace{-0.2mm}\lambda}/\lambda)^{1/\lambda}}{\sigma\Gamma(1 + 1/\lambda)}
%    \exp\left(-\frac{1}{\lambda}\frac{\delta_{\hspace{-0.2mm}\theta_1\hspace{-0.5mm},\hspace{-0.2mm}\lambda}}{A_{\hspace{-0.1mm}\theta_1\hspace{-0.5mm},\hspace{-0.2mm}\lambda}\hspace{-0.6mm}(y)}|y|^{\lambda}\right),\quad y\leqdef \sigma^{-1}(x-\mu)\in \R.
%\end{equation}
Consider finally a directional test for the $\mathrm{EPD}_{\lambda}(\mu,\sigma)$ designed to detect symmetric alternatives. For a fixed value of $\lambda\ge 1$, the alternative hypothesis is then given by
\begin{equation}\label{eq:H1.sym}
    H_1 : \Xn \sim\mathrm{APD}_{\lambda}(1/2,\theta_2,\mu,\sigma),\text{ where }\theta_2\neq \lambda, \text{ and } \mu \text{ and } \sigma \text{ are unknown,}
\end{equation}
and the density function of the $\mathrm{APD}_{\lambda}(1/2,\theta_2,\mu,\sigma)$ is given by
\begin{equation}\label{eq:APD.sym.density}
    f_{\lambda}(x \nvert 1/2,\theta_2,\mu,\sigma)= \frac{1}{2\sigma  \lambda^{1/\theta_2} \Gamma(1 + 1/\theta_2)} \exp\left(-\frac{1}{\lambda}
    |y|^{\theta_2}\right), \quad y\leqdef \sigma^{-1}(x-\mu)\in \R,
\end{equation}
since $\delta_{\hspace{-0.1mm}1/2\hspace{-0.3mm},\hspace{-0.2mm}\theta_2} =A_{\hspace{-0.1mm}1/2\hspace{-0.3mm},\hspace{-0.2mm}\theta_2}\hspace{-0.6mm}(y)=2^{-\theta_2}$ (see Definition~\ref{def:APD}).
%Note that the $\mathrm{APD}_{\lambda}(1/2,\theta_2,\mu,\sigma)$ represents a family of exponential power distributions through the tail decay parameter $\theta_2\in(0,\infty)$, which includes the specific $\mathrm{EPD}_{\lambda}(\mu,\sigma)\leqdef \mathrm{APD}_{\lambda}(1/2,\lambda,\mu,\sigma)$  to be tested, where $\lambda$ is always fixed by the user.

\subsection{Other definitions and notation}\label{sec:other}

\begin{proposition}\label{prop:MLE.estimators}
For an i.i.d.\ sample $\Xn$ and for a fixed value of $\lambda\ge 1$, the maximum likelihood estimators of $\mu$ and $\sigma$ for the $\mathrm{EPD}_{\lambda}(\mu,\sigma)$ (see Definition~\ref{def:EPD}) are given by
\begin{equation}
    \hat{\sigma}_{\lambda} = \bigg(\frac{1}{n} \sum_{i=1}^n |X_i - \hat{\mu}_{\lambda}|^{\lambda}\bigg)^{1/\lambda}
\end{equation}
and
\begin{equation}
    \hat{\mu}_{\lambda} =
        \begin{cases}
                \mathrm{median}(\Xn), ~&\mbox{if } \lambda = 1, \\[2mm]
                \bar{X}\leqdef \frac{1}{n} \sum_{i=1}^n X_i, ~&\mbox{if } \lambda = 2,\\[2mm]
                %\mathrm{argmin}_{m\in \R} \sum_{i=1}^n |X_i - m|^{\lambda}, &\\[0mm]
                \text{the unique numerical solution to } & \\[0mm]
                \sum_{i=1}^n |X_i - \hat{\mu}_{\lambda}|^{\lambda - 1} \mathrm{sign}(X_i - \hat{\mu}_{\lambda}) = 0, ~&\mbox{if } \lambda > 1.
        \end{cases}
\end{equation}
\end{proposition}

\begin{remark}\label{rem:median}
Following the usual convention, we define the median of an ordered sample as the central value if $n$ is odd and as the arithmetic mean of the two central values if $n$ is even. Also, when $\lambda \not\in \{1,2\}$, $\hat{\mu}_{\lambda}$ has no explicit expression, however, the numerical calculation is simple.
\end{remark}
\begin{remark}\label{rem:strong.consistency}
We prove in the Supplementary Material Section~B.2 that $\hat{\mu}_{\lambda}$ and $\hat{\sigma}_{\lambda}$ are strongly consistent, both under $H_0$ and $H_1$. %\ref{sec:preliminaries.proofs}
\end{remark}

\begin{definition}\label{def:S.K.lambda}
For an i.i.d.\ sample $\Xn$ and for a fixed value of $\lambda\ge 1$, the `$\lambda$-th-power skewness' and the `$\lambda$-th-power kurtosis' are, respectively, given by
\begin{equation}\label{eq:S.K.lambda}
    S_{\lambda}(\Xn) \leqdef \frac{1}{n} \sum_{i=1}^n |Y_i|^{\lambda} \mathrm{sign}(Y_i) \quad \text{and} \quad
    K_{\lambda}(\Xn) \leqdef \frac{1}{n} \sum_{i=1}^n  |Y_i|^{\lambda} \log |Y_i|,
\end{equation}
where $Y_i \leqdef \hat{\sigma}_{\lambda}^{-1} (X_i - \hat{\mu}_{\lambda})$, $\hat{\mu}_{\lambda}$ and $\hat{\sigma}_{\lambda}$ are the maximum likelihood estimators of $\mu$ and $\sigma$ for the $\mathrm{EPD}_{\lambda}(\mu,\sigma)$ as given in Proposition~\ref{prop:MLE.estimators}, and we define $(|y|^{\lambda} \log|y|)|_{y=0}\leqdef 0$.
\end{definition}
\begin{remark}\label{prop:Kgeq0}
    For any sample $\Xn$, note that $K_{\lambda}(\Xn)\geq 0$ as a direct consequence of Jensen's inequality and the convexity of $\tau(x) \leqdef x \log x$:
    \begin{equation}
        \lambda K_{\lambda}(\Xn) = \frac{1}{n} \sum_{i=1}^n  |Y_i|^{\lambda} \log |Y_i|^{\lambda}=\sum_{i=1}^n \frac{1}{n}\tau\big(|Y_i|^{\lambda}\big) \geq \tau\Bigg(\frac{1}{n} \sum_{i=1}^n |Y_i|^{\lambda}\Bigg) = \tau(1) = 0.
    \end{equation}
\end{remark}
As their names suggest, the $\lambda$-th-power skewness and kurtosis are new measures of asymmetry and tail thickness for the data distribution. Our test statistics will be based on these two quantities.  A positive (resp.\ negative) value of $S_{\lambda}(\Xn)$ suggests that the distribution is right-skewed (resp.\ left-skewed), while a value close to 0 suggests that the distribution is symmetric. A large (resp.\ small) value of $K_{\lambda}(\Xn)$ corresponds to a heavy-tailed (resp.\ light-tailed) distribution.

\begin{definition}\label{def:net.S.K.lambda}
For an i.i.d.\ sample $\Xn$ and for a fixed value of $\lambda\ge 1$, the `$\lambda$-th-power net kurtosis' is given by
\begin{equation}\label{eq:knet}
    %K^{\mathrm{net}}_{\lambda}(\Xn) = \max\left\{0, K_{\lambda}(\Xn)-(\lambda/2)(S_{\lambda}(\Xn))^2\right\},
    % K^{\mathrm{net}}_{\lambda}(\Xn)\leqdef \Big(\max\big(0,K_{\lambda}(\Xn)-(\lambda/2)S^2_{\lambda}(\Xn)\big)\Big)^{1/4},
     K^{\mathrm{net}}_{\lambda}(\Xn)\leqdef \max\big(0,K_{\lambda}(\Xn)-(\lambda/2)S^2_{\lambda}(\Xn)\big),
\end{equation}
where $S_{\lambda}(\Xn)$ and $K_{\lambda}(\Xn)$ are given in Definition~\ref{def:S.K.lambda}.
\end{definition}
The $\lambda$-th-power net kurtosis will be used instead of the $\lambda$-th-power kurtosis in our test statistics to correct for dependence with the $\lambda$-th-power skewness for small to moderate sample sizes. A large (resp.\ small) value of $K^{\mathrm{net}}_{\lambda}(\Xn)\ge 0$ corresponds to a heavy-tailed (resp.\ light-tailed) distribution, given the level of skewness. Note that in practice the only cases where we found the maximum function needed to bound a (slightly) negative value of $K_{\lambda}(\Xn)-(\lambda/2)S^2_{\lambda}(\Xn)$ in \eqref{eq:knet} were for nearly symmetric samples with extremely light tails ($K_{\lambda}(\Xn)$ and $S^2_{\lambda}(\Xn)$ close to 0), and thus far from the null hypothesis.

\begin{remark}
Analogously, for a random variable $X$ with an arbitrary distribution function $F$, the $\lambda$-th-power skewness, $\lambda$-th-power kurtosis and $\lambda$-th-power net kurtosis can be defined, respectively, as
\begin{equation*}
  S_{\lambda}(F)\leqdef\EE(|Y|^{\lambda} \mathrm{sign}(Y)),\quad K_{\lambda}(F)\leqdef \EE(|Y|^{\lambda} \log|Y|),
\end{equation*}
and
\begin{equation*}
      K^{\mathrm{net}}_{\lambda}(F)\leqdef \max\big(0,K_{\lambda}(F)-(\lambda/2)S^2_{\lambda}(F)\big),
\end{equation*}
where $Y=\sigma_{\lambda}^{-1}(X-\mu_{\lambda})$, with $\sigma_{\lambda}=(\EE|X-\mu_{\lambda}|^{\lambda})^{1/\lambda}$, $\mu_{1}=\mathrm{median}(X)$, $\mu_2=\EE(X)$ and in general for $\lambda>1$, $\mu_{\lambda}$ is the numerical solution to $\EE(|X - \mu_{\lambda}|^{\lambda - 1} \mathrm{sign}(X - \mu_{\lambda})) = 0$, which is unique for most known distributions.  These measures, which represent the asymptotic version of their sample counterparts, can be useful in comparing the asymmetry and tail thickness of a distribution relative to the $\mathrm{EPD}_{\lambda}(\mu,\sigma)$ tested in~$H_0$. For symmetric distributions $F$, we have $S_1(F)=0$ and $K_1(F)=K^{\mathrm{net}}_{1}(F)$. For example, $K_1(\mathrm{U[a,b]})=0.193$, $K_1(\mathcal{N}(\mu,\sigma^2))=0.284$, $K_1(t_{10})=0.314$, $K_1(t_4)=0.386$, $K_1(\mathrm{Laplace}(\mu,\sigma))=0.423$, $K_1(t_3)=0.452$ and $K_1(t_2)=0.693$. %If we consider the (asymmetric) gamma distribution with a shape parameter $a$, then $S_1(F)=0.443, K_1(F)= 0.450, K^{\mathrm{net}}_{1}(F)  = 0.352$ if $a=1$ (exponential distribution), $S_1(F)=0.213,  K_1(F)= 0.320, K^{\mathrm{net}}_{1}(F) = 0.298$ if $a=4$ and  $S_1(F)=0.076,  K_1(F)= 0.288, K^{\mathrm{net}}_{1}(F) = 0.285$ if $a=30$.

\end{remark}

\vspace{3mm}
\noindent\textbf{Notation.} For $z>0$, the gamma, digamma and trigamma functions are denoted respectively by
\begin{equation}\label{eq:gamma.functions}
    \Gamma(z) \leqdef \int_0^{\infty} t^{z-1} e^{-t} \rd t, \quad \psi(z) \leqdef \frac{\rd}{\rd z} \log \Gamma(z)\quad
    \text{ and } \quad \psi_1(z)\leqdef \frac{\rd}{\rd z}\psi(z).
\end{equation}
The chi-square distribution with 2 degrees of freedom is denoted by $\chi_2^2$ and its counterpart with the non-centrality parameter $\xi>0$ by $\chi_2^2(\xi)$. Convergence in distribution, in probability, and almost-surely, are denoted respectively by $\stackrel{\mathcal{D}}{\longrightarrow}$, $\stackrel{\mathcal{P}}{\longrightarrow}$, and $\stackrel{\mathrm{a.s.}}{\longrightarrow}$, as $n\to\infty$. Furthermore, we note `\,$\stackrel{\mathrm{app}}{\sim}$' for `approximately distributed as'.

\section{Goodness-of-fit tests for the \texorpdfstring{$\bb{\mathrm{EPD}_{\lambda}(\mu,\sigma)}$}{EPD lambda (mu, sigma)}}\label{sec:tests.asymp}

In Section~\ref{sec:asymp.dist.H0}, we present our family of goodness-of-fit tests for the EPD and their asymptotic distributions under the null hypothesis. The asymptotic distributions under local alternatives are given in Section~\ref{sec:asymp.dist.H1}. Finally, the Laplace and Gaussian goodness-of-fit tests are presented in Section~\ref{sec:Laplace.Gaussian}.

\subsection{The test statistics and their distributions under the null hypothesis}\label{sec:asymp.dist.H0}

We first use Rao's score test on the APD family specified in $H_1$. For the omnibus test, this consists in taking the $2$-dimensional asymptotic test statistic
\begin{equation}\label{eq:score.rao}
    \bigg(\frac{1}{n}\sum_{i=1}^n \frac{\partial}{\partial \theta_i} \log f_{\lambda}
    (X_i \nvert \theta_1,\theta_2,\mu,\sigma)\big|_{(\theta_1,\theta_2)= (1/2,\lambda)}\bigg)_{i\in\{1,2\}},
\end{equation}
and replacing $\mu$ and $\sigma$ with their maximum likelihood estimators $\hat{\mu}_{\lambda}$ and $\hat{\sigma}_{\lambda}$ under $H_0$. Only the first (resp.\ second) component of the vector in \eqref{eq:score.rao} is needed for the test against asymmetric (resp.\ symmetric) alternatives. The resulting test statistics are given in Definition~\ref{def:asymp.stats}. Their construction is detailed in the Supplementary Material B. These tests are only valid for very large sample sizes, but will be adapted below for all sample sizes. The case $\lambda < 1$ is not studied in this paper because the numerical solution to the equation $\sum_{i=1}^n |X_i - \hat{\mu}_{\lambda}|^{\lambda - 1} \mathrm{sign}(X_i - \hat{\mu}_{\lambda}) = 0$ in Proposition~\ref{prop:MLE.estimators} is not unique, which makes the calculation of the maximum likelihood estimate of $\mu$ unstable due to multiple modes. Moreover, some proofs break down for $\lambda < 1$ (such as the proof of Proposition~B.5, and thus the proof of Theorem~\ref{thm:asymp.conv.H0}) or would have to be significantly extended. Therefore, dealing with the case $\lambda < 1$ is not obvious and would require extensive additional research.

\begin{definition}\label{def:asymp.stats}
Consider the tests of fit for the $\mathrm{EPD}_{\lambda}(\mu,\sigma)$ with a fixed value of $\lambda\ge 1$. The statistic for the asymptotic directional test of fit against asymmetric alternatives is given by
%\begin{equation}\label{eq:ZS.asymp}
%    Z^{*\!}(S_{\lambda})=\frac{n^{1/2} S_{\lambda}(\Xn)}{\left(1 + \lambda - \lambda^2[\Gamma(2-1/\lambda)
%    \Gamma(1/\lambda)]^{-1}\right)^{1/2}}.
%\end{equation}
% \begin{equation}\label{eq:ZS.asymp}
%     Z^{*\!}(S_{\lambda})\leqdef\frac{n^{1/2} S_{\lambda}(\Xn)}{\Big(1 + \lambda - \frac{\lambda^2}{\Gamma(2-1/\lambda)\Gamma(1/\lambda)}\Big)^{1/2}}.
% \end{equation}
\begin{equation}\label{eq:ZS.asymp}
    Z^{*\!}(S_{\lambda})\leqdef\sqrt{n}\bigg(1 + \lambda - \frac{\lambda^2}{\Gamma(2-1/\lambda)\Gamma(1/\lambda)}\bigg)^{-1/2} S_{\lambda}(\Xn),
\end{equation}
while the statistic for the asymptotic directional test of fit against symmetric alternatives is given by
%\begin{equation}\label{def:ZKstar}
%    Z^{*\!}(K_{\lambda})=\frac{n^{1/2} \left(K_{\lambda}(\Xn)-(1/\lambda)\big[\lambda+\log \lambda +
%    \psi(1/\lambda)\big]\right)}{\left[(1/\lambda)(1+1/\lambda) \psi_1(1+1/\lambda) - 1/\lambda\right]^{1/2}}.
%\end{equation}
% \begin{equation}\label{eq:ZK.asymp}
%     Z^{*\!}(K_{\lambda})\leqdef\frac{n^{1/2} \Big(K_{\lambda}(\Xn)-\frac{\lambda+\log \lambda + \psi(1/\lambda)}{\lambda}\Big)}
%     {\Big(\frac{(1+1/\lambda) \psi_1(1+1/\lambda)-1}{\lambda}\Big)^{1/2}}.
% \end{equation}
\begin{equation}\label{eq:ZK.asymp}
    Z^{*\!}(K_{\lambda})\leqdef\sqrt{n} \bigg(\frac{(1+1/\lambda) \psi_1(1+1/\lambda)-1}{\lambda}\bigg)^{-1/2}\bigg(K_{\lambda}(\Xn)-\frac{\lambda+\log \lambda + \psi(1/\lambda)}{\lambda}\bigg),
\end{equation}
where $S_{\lambda}(\Xn)$ and $K_{\lambda}(\Xn)$ are respectively the sample $\lambda$-th-power skewness and $\lambda$-th-power kurtosis given in Definition~\ref{def:S.K.lambda}, and $\Gamma(z),\psi(z),\psi_1(z)$ are defined in \eqref{eq:gamma.functions}.
The statistic for the asymptotic omnibus test is given by
\begin{equation}
    \big(Z^{*\!}(S_{\lambda})\big)^2+\big(Z^{*\!}(K_{\lambda})\big)^2.
\end{equation}
\end{definition}

\begin{remark}
    The superscript $*$ symbol in Definition~\ref{def:asymp.stats} indicates the asymptotic nature of the test statistics.
\end{remark}

\begin{theorem}\label{thm:asymp.conv.H0}
Under the null hypothesis, we have, as $n\to \infty$,
\begin{equation}\label{eq:asymp.conv.Zstar}
    \begin{pmatrix}
        Z^{*\!}(S_{\lambda})\\[1mm]
        Z^{*\!}(K_{\lambda})
    \end{pmatrix}
    \stackrel{\mathcal{D}}{\longrightarrow} \mathcal{N}_2
    \left(
    \begin{pmatrix}
        0 \\[1mm]
        0
    \end{pmatrix},
    \begin{pmatrix}
        1 & 0 \\[1mm]
        0 & 1
    \end{pmatrix}
    \right)
\quad \text{ and } \quad \big(Z^{*\!}(S_{\lambda})\big)^2+\big(Z^{*\!}(K_{\lambda})\big)^2\stackrel{\mathcal{D}}{\longrightarrow} \chi_2^2.
\end{equation}
\end{theorem}

%\begin{equation}\label{eq:asymp.conv.Zstar}
%    Z^{*\!}(S_{\lambda})\stackrel{\mathcal{D}}{\longrightarrow}\mathcal{N}(0,1),\quad
%    Z^{*\!}(K_{\lambda})\stackrel{\mathcal{D}}{\longrightarrow}\mathcal{N}(0,1),\quad \text{ as } n\to\infty,
%\end{equation}
%and given that $Z^{*\!}(S_{\lambda})$ is asymptotically independent of $Z^{*\!}(K_{\lambda})$, we have

\begin{remark}
Theorem~\ref{thm:asymp.conv.H0} contains our main theoretical result. The proof is presented in the Supplementary Material Section~B.3. When $\mu$ and $\sigma$ are known and fixed, this result follows by a standard application of the central limit theorem. Here the proof is rendered more difficult by the fact that $\mu$ and $\sigma$ are replaced by their maximum likelihood estimators, $\hat{\mu}_{\lambda}$ and $\hat{\sigma}_{\lambda}$, which requires a control of the first-order derivatives of the score function when we expand the score function around $\mu$ and $\sigma$ under $H_0$. Nevertheless, controlling the derivatives is straightforward using ideas that go back to Lucien Le Cam on the uniform law of large numbers, see, e.g., Chapter~16 in \cite{MR1699953}. However, there is one exception, namely $\lambda = 1$. In that case, one of the components of the first-order derivative matrix has misbehaving logarithmic summands. In its essential form, for $\mu = 0$, the problem reduces to showing that
\begin{equation}\label{eq:remark.3.4}
    \bigg|\int_0^1 \frac{1}{n} \sum_{i=1}^n \log|X_i -  v \hat{\mu}_1| \rd v - \frac{1}{n} \sum_{i=1}^n \log|X_i| \bigg| \stackrel{\mathcal{P}}{\longrightarrow} 0, \quad \text{as } n\to \infty.
\end{equation}
Notice that $\log|\cdot|$ blows up when $X_i - v \hat{\mu}_1$ is close to zero for $v\in [0,1]$.
In more technical terms, the envelope of the class of functions $\{x\mapsto \log|x - t \, |\}_{t:|t| < \delta}$ is infinite in any small neighbourhood of $x = 0$, which means that standard uniform laws of large numbers cannot be applied, see, e.g., \cite[Section~2.4]{MR1385671}. The saving grace is that while $\log|\cdot|$ blows up at $0$, it is still integrable locally at $0$, and $\log|\cdot|$ is mild enough at infinity to be dominated by the exponential tail of the Laplace distribution (i.e., the $\mathrm{EPD}_1$ distribution). Under such conditions, it is to be expected that some form of uniform law of large numbers still holds, so that \eqref{eq:remark.3.4} is true (numerical simulations also confirm that intuition) and the aforementioned control on the first-order derivatives remains valid.
In \cite{MR3842623}, a new uniform $L^1$ law of large numbers for summands that blow up was developed for this specific purpose.
Using the main result in that paper, it can be shown that
\begin{equation}
    \sup_{v\in [0,1]} \EE\bigg|\frac{1}{n} \sum_{i=1}^n \log|X_i - v \hat{\mu}_1| -  \EE\big[\log|X_1|\big]\bigg| \longrightarrow 0, \quad \text{as } n\to \infty,
\end{equation}
so that the convergence in \eqref{eq:remark.3.4} holds not only in probability, but in fact in $L^1$.
\end{remark}

Theorem~\ref{thm:asymp.conv.H0} gives us two independent normally distributed $Z$-scores representing asymptotic test statistics designed to detect asymmetric and symmetric alternatives, which can be combined into an omnibus test statistic converging to a $\chi_2^2$ distribution. As is often the case for small to moderate sample sizes, we observed that the distributions of Rao's score test statistics $Z^{*\!}(S_{\lambda})$ and $Z^{*\!}(K_{\lambda})$ are not well approximated, under the null hypothesis, by their asymptotic $\mathcal{N}(0,1)$ distribution. Moreover, the independence of these two measures does not hold. To solve this problem, we generalize the approach adopted in \cite{Desgagne_Lafaye_2018}. We address here the specific cases $\lambda\in\{1,1.5,2,2.5,3\}$, but the approach would be the same for all $\lambda\ge 1$. To increase precision, we optimized our approximations for $n\geq 20$, but they remain valid for smaller sample sizes.

The most important issue to address is the dependence between $S_{\lambda}(\Xn)$ and $K_{\lambda}(\Xn)$, which increases as the sample size $n$ decreases and the value of $\lambda$ increases (meaning lighter tails). A skewed distribution with a large value of $|S_{\lambda}(\Xn)|$ is often also heavy-tailed with a large value of $K_{\lambda}(\Xn)$. This situation, at first sight complex, can fortunately be solved by using $K^{\mathrm{net}}_{\lambda}(\Xn)$, the $\lambda$-th-power net kurtosis introduced in Definition~\ref{def:net.S.K.lambda}. We find numerically that $S_{\lambda}(\Xn)$ and $(K^{\mathrm{net}}_{\lambda}(\Xn))^{1/4}$ are closely distributed as a normal and that the dependency between them is negligible, for all $n\geq 20$. The power of $1/4$ is inspired by an improvement of the Wilson-Hilferty cube root transformation that leads to approximate normality (see \cite{doi:10.2307/2684608}). Since we designed the $\lambda$-th-power net kurtosis to replace the $\lambda$-th-power kurtosis, the next step is to define an asymptotic $Z$-score denoted $Z^{*\!}(K^{\mathrm{net}}_{\lambda})$ to replace $Z^{*\!}(K_{\lambda})$. We found that the two $Z$-scores are asymptotically equivalent.

\begin{definition}
Consider the test of fit for the $\mathrm{EPD}_{\lambda}(\mu,\sigma)$ with a fixed value of $\lambda\ge 1$. A statistic equivalent to $Z^{*\!}(K_{\lambda})$ for the asymptotic directional test of fit against symmetric alternatives is given by
% \begin{equation}
%     Z^{*\!}(K^{\mathrm{net}}_{\lambda})\leqdef \frac{n^{1/2}\Big(\big(K^{\mathrm{net}}_{\lambda}(\Xn)\big)^{1/4} - \left(\frac{\lambda+\log \lambda +
%     \psi(1/\lambda)}{\lambda}\right)^{1/4}\Big)}
%     {\Big[\frac{1}{16}\left(\frac{\lambda+\log
%     \lambda + \psi(1/\lambda)}{\lambda}\right)^{-3/2} \frac{(1+1/\lambda) \psi_1(1+1/\lambda)-1}{\lambda}\Big]^{1/2}},
% \end{equation}
\footnotesize
\begin{equation}
    Z^{*\!}(K^{\mathrm{net}}_{\lambda})\!\leqdef\! \sqrt{n} \bigg[\frac{1}{16}\left(\frac{\lambda+\log
    \lambda + \psi(1/\lambda)}{\lambda}\right)^{\!\!-3/2} \frac{(1+1/\lambda) \psi_1(1+1/\lambda)-1}{\lambda}\bigg]^{-1/2}\!\bigg(\big(K^{\mathrm{net}}_{\lambda}(\Xn)\big)^{1/4} - \left(\frac{\lambda+\log \lambda +
    \psi(1/\lambda)}{\lambda}\right)^{1/4}\bigg),
\end{equation}
\normalsize
where $K^{\mathrm{net}}_{\lambda}(\Xn)$ is the $\lambda$-th-power net kurtosis given in Definition~\ref{def:net.S.K.lambda}.
\end{definition}

\begin{proposition}\label{prop:asymp.conv.H0}
Under the null hypothesis, we have, as $n\to \infty$,
\begin{equation}
   Z^{*\!}(K^{\mathrm{net}}_{\lambda})/Z^{*\!}(K_{\lambda})\stackrel{\mathcal{P}}{\longrightarrow}1,
\end{equation}
\begin{equation}\label{eq:asymp.conv.Zstar2}
    \begin{pmatrix}
        Z^{*\!}(S_{\lambda})\\[1mm]
        Z^{*\!}(K^{\mathrm{net}}_{\lambda})
    \end{pmatrix}
    \stackrel{\mathcal{D}}{\longrightarrow} \mathcal{N}_2
    \left(
    \begin{pmatrix}
        0 \\[1mm]
        0
    \end{pmatrix},
    \begin{pmatrix}
        1 & 0 \\[1mm]
        0 & 1
    \end{pmatrix}
    \right)
    \quad \text{ and } \quad \big(Z^{*\!}(S_{\lambda})\big)^2+\big(Z^{*\!}(K^{\mathrm{net}}_{\lambda})\big)^2\stackrel{\mathcal{D}}{\longrightarrow} \chi_2^2.
\end{equation}
\end{proposition}
The proof of Proposition~\ref{prop:asymp.conv.H0} is given in the Supplementary Material Section~B.5. Since the $Z$-scores  $Z^{*\!}(S_{\lambda})$ and $Z^{*\!}(K^{\mathrm{net}}_{\lambda})$ are affine transformations of $S_{\lambda}(\Xn)$ and $(K^{\mathrm{net}}_{\lambda}(\Xn))^{1/4}$, they are also very close to being independent and normally distributed for all sample sizes. The final step is to adapt $Z^{*\!}(S_{\lambda})$ and $Z^{*\!}(K^{\mathrm{net}}_{\lambda})$ to ensure that their expectations and variances are very close to 0 and 1 respectively, for all sample sizes $n\ge 20$, always under the null hypothesis. We can show that $\EE(S_{\lambda}(\Xn))=0$ for all sample sizes under the null hypothesis, but we have to use simulation and regression techniques to approximate $\VV(n^{1/2}S_{\lambda}(\Xn))$, $\EE((K^{\mathrm{net}}_{\lambda}(\Xn))^{1/4})$ and $\VV(n^{1/2}(K^{\mathrm{net}}_{\lambda}(\Xn))^{1/4})$. Specifically, for $n=20,21,\dots,200$, we fit the three following linear regression models with no intercept term:

\begin{equation}
    \frac{\VV(n^{1/2}S_{\lambda}(\Xn))}{1 + \lambda - \frac{\lambda ^2}{\Gamma(2-1/\lambda)\Gamma(1/\lambda)}}-1= \cone\cdot n^{-\hspace{-0.3mm}\aone} +\varepsilon_n,
    \quad\quad\quad\quad\frac{\EE\big((K^{\mathrm{net}}_{\lambda}(\Xn))^{1/4}\big)}{\left(\frac{\lambda+\log \lambda + \psi(1/\lambda)}{\lambda}\right)^{1/4}}-1=
    \ctwo\cdot n^{-\hspace{-0.3mm}\atwo}+\varepsilon_n,
\end{equation}

\begin{equation}
    \text{and }\quad\quad\frac{\VV\big(n^{1/2}(K^{\mathrm{net}}_{\lambda}(\Xn))^{1/4}\big)}{\frac{1}{16}\left(\frac{\lambda+\log \lambda + \psi(1/\lambda)}{\lambda}\right)^{-3/2}
    \frac{(1+1/\lambda) \psi_1(1+1/\lambda)-1}{\lambda}}-1= \cthree\cdot
    n^{-\hspace{-0.3mm}\athree}+\cfour\cdot
    n^{-\hspace{-0.3mm}\afour}+\varepsilon_n,
\end{equation}
where $\VV(n^{1/2}S_{\lambda}(\Xn))$, $\EE((K^{\mathrm{net}}_{\lambda}(\Xn))^{1/4})$ and $\VV(n^{1/2}(K^{\mathrm{net}}_{\lambda}(\Xn))^{1/4})$ are estimated using 1,000,000 simulations for each value of $n=20,21,\dots,200$, with $\Xn\sim \mathrm{EPD}_{\lambda}(\mu,\sigma)$. The values of $\mu$ and $\sigma$ are arbitrary since $S_{\lambda}(\Xn)$, $K_{\lambda}(\Xn)$ and $K^{\mathrm{net}}_{\lambda}(\Xn)$ are location-scale invariant. The dependent variables of the regressions are given by the ratio of the finite and asymptotic variances (or expectations), minus 1. The explanatory variables are given by $n^{-\aone},n^{-\atwo}, n^{-\athree}, n^{-\afour}$, $n=20,21,\dots,200$, and the coefficients are given by $\cone,\ctwo,\cthree,\cfour$. The values $\aone$, $\atwo$, $\athree$, $\afour$ are chosen to maximize the $R^2$ value of their respective regressions. The quality of fit of the regressions is remarkable with $R^2$ values close to 1.

We present the resulting test statistics in Definition~\ref{def:APD.tests}, with their asymptotic distributions is Proposition~\ref{prop:asymp.conv.H02} and their approximate distributions for all sample sizes in Proposition~\ref{prop:X.APD.finite.sample}.

\begin{definition}\label{def:APD.tests}
Consider the tests of fit for the $\mathrm{EPD}_{\lambda}(\mu,\sigma)$ with a fixed value of $\lambda\geq 1$. The statistic for the directional test of fit against asymmetric alternatives, for all sample sizes, is given by
\begin{equation}\label{eq:ZS}
    Z(S_{\lambda})\leqdef\frac{n^{1/2} S_{\lambda}(\Xn)}{\Big[\big(1 + \lambda - \frac{\lambda^2}{\Gamma(2-1/\lambda)\Gamma(1/\lambda)}\big)\big(1 +
    \frac{\cone}{n^{\hspace{-0.3mm}\aone}}\big)\Big]^{1/2}},
\end{equation}
while the statistic for the directional test of fit against symmetric alternatives, for all sample sizes, is given by
\begin{equation}\label{eq:net.ZK}
    Z(K^{\mathrm{net}}_{\lambda})\leqdef \frac{n^{1/2}\Big(\big(K^{\mathrm{net}}_{\lambda}(\Xn)\big)^{1/4} - \left(\frac{\lambda+\log \lambda +
    \psi(1/\lambda)}{\lambda}\right)^{1/4}\left(1 +
    \frac{\ctwo}{n^{\hspace{-0.3mm}\atwo}}\right)\Big)}
    {\Big[\frac{1}{16}\left(\frac{\lambda+\log
    \lambda + \psi(1/\lambda)}{\lambda}\right)^{-3/2} \frac{(1+1/\lambda) \psi_1(1+1/\lambda)-1}{\lambda}\left(1 +
    \frac{\cthree}{n^{\hspace{-0.3mm}\athree}}+
    \frac{\cfour}{n^{\hspace{-0.3mm}\afour}}\right)\Big]^{1/2}},
\end{equation}
where $S_{\lambda}(\Xn)$ and $K^{\mathrm{net}}_{\lambda}(\Xn)$ are respectively the sample $\lambda$-th-power skewness and $\lambda$-th-power net kurtosis given in Definitions~\ref{def:S.K.lambda}~and~\ref{def:net.S.K.lambda}, $\Gamma(z),\psi(z),\psi_1(z)$ are defined in \eqref{eq:gamma.functions} and the constants $\alpha_{j\hspace{-0.3mm},\hspace{-0.3mm}\lambda},c_{j\hspace{-0.3mm},\hspace{-0.3mm}\lambda}, j=1,2,3,4$, are provided in Table~\ref{table:cte} for $\lambda\in\{1, 1.5, 2, 2.5, 3\}$. The statistic for the omnibus test, for all sample sizes, is given by
\begin{equation}\label{eq:Xapd}
    X^{\mathrm{APD}}_{\lambda}\leqdef Z^2(S_{\lambda})+Z^2(K^{\mathrm{net}}_{\lambda}).
\end{equation}
\end{definition}

\def\arraystretch{1.2}% 1 is the default, change whatever you need
\begin{table}[H]
\begin{center}
\captionsetup{width=0.8\linewidth}
\caption{The constants $\alpha_{j\hspace{-0.3mm},\hspace{-0.3mm}\lambda}$ and $c_{j\hspace{-0.3mm},\hspace{-0.3mm}\lambda}$, $j=1,2,3,4$ for $\lambda\in\{1, 1.5, 2, 2.5, 3\}$ used in Definition~\ref{def:APD.tests}. \label{table:cte}}
%\begin{tabular}{|l|cccccccc|}
\begin{tabular}{|l|rrrrrrrr|}
    \hline
    & $\aone$ & $\cone$ & $\atwo$ &
    $\ctwo$ & $\athree$ & $\cthree$ &
    $\afour$ & $\cfour$ \cr
    \hline
    $\lambda=1$ (even $n$) & 1.06 & -1.856 & 1.01 & -0.422 & 0.92 & -1.950  & 2.3 & 39.349  \cr
    $\lambda=1$ (odd $n$)  & 1.03 & -0.281 & 0.86 & -0.198 & 1.04 & -3.827  & 1.0 & 0.000       \cr
    $\lambda=1.5$            & 0.99 & -0.952 & 0.99 & -0.637 & 0.55 & -3.488  & 0.5 & 2.434   \cr
    $\lambda=2$            & 0.99 & -1.890 & 1.00 & -0.788 & 1.05 & -9.327  & 1.4 & 14.208  \cr
    $\lambda=2.5$            & 0.99 & -2.981 & 0.99 & -0.844 & 1.10 & -23.104 & 1.3 & 30.028  \cr
    $\lambda=3$            & 0.97 & -3.855 & 0.98 & -0.880 & 1.14 & -95.743 & 1.2 & 103.871 \cr
    \hline
\end{tabular}
\end{center}
\end{table}

\begin{proposition}\label{prop:asymp.conv.H02}
Under the null hypothesis, we have, as $n\to \infty$,
\begin{equation}
    Z(S_{\lambda})/Z^{*\!}(S_{\lambda})\stackrel{\mathrm{a.s.}}{\longrightarrow}1,\quad  \quad
    Z(K^{\mathrm{net}}_{\lambda})/Z^{*\!}(K^{\mathrm{net}}_{\lambda})\stackrel{\mathcal{P}}{\longrightarrow}1,
\end{equation}
\begin{equation}\label{eq:asymp.conv.Zstar22}
    \begin{pmatrix}
        Z(S_{\lambda})\\[1mm]
        Z(K^{\mathrm{net}}_{\lambda})
    \end{pmatrix}
    \stackrel{\mathcal{D}}{\longrightarrow} \mathcal{N}_2
    \left(
    \begin{pmatrix}
        0 \\[1mm]
        0
    \end{pmatrix},
    \begin{pmatrix}
        1 & 0 \\[1mm]
        0 & 1
    \end{pmatrix}
    \right)
    \quad \text{ and } \quad X^{\mathrm{APD}}_{\lambda}\stackrel{\mathcal{D}}{\longrightarrow} \chi_2^2.
\end{equation}
\end{proposition}
The proof of Proposition~\ref{prop:asymp.conv.H02} is given in the Supplementary Material Section~B.6.

\begin{proposition}\label{prop:X.APD.finite.sample}
Under the null hypothesis, we have, for all sample sizes (up to very high precision for $n\geq 20$),
\begin{equation}\label{eq:dist.Z}
    \begin{pmatrix}
        Z(S_{\lambda})\\[1mm]
        Z(K^{\mathrm{net}}_{\lambda})
    \end{pmatrix}
    \stackrel{\mathrm{app}}{\sim} \mathcal{N}_2
    \left(
    \begin{pmatrix}
        0 \\[1mm]
        0
    \end{pmatrix},
    \begin{pmatrix}
        1 & 0 \\[1mm]
        0 & 1
    \end{pmatrix}
    \right)
    \quad \text{ and } \quad X^{\mathrm{APD}}_{\lambda}\stackrel{\mathrm{app}}{\sim} \chi_2^2.
\end{equation}
\end{proposition}

\begin{figure}[ht]
\begin{center}
\begin{tabular}{cc}
    \includegraphics[width=0.3\textwidth]{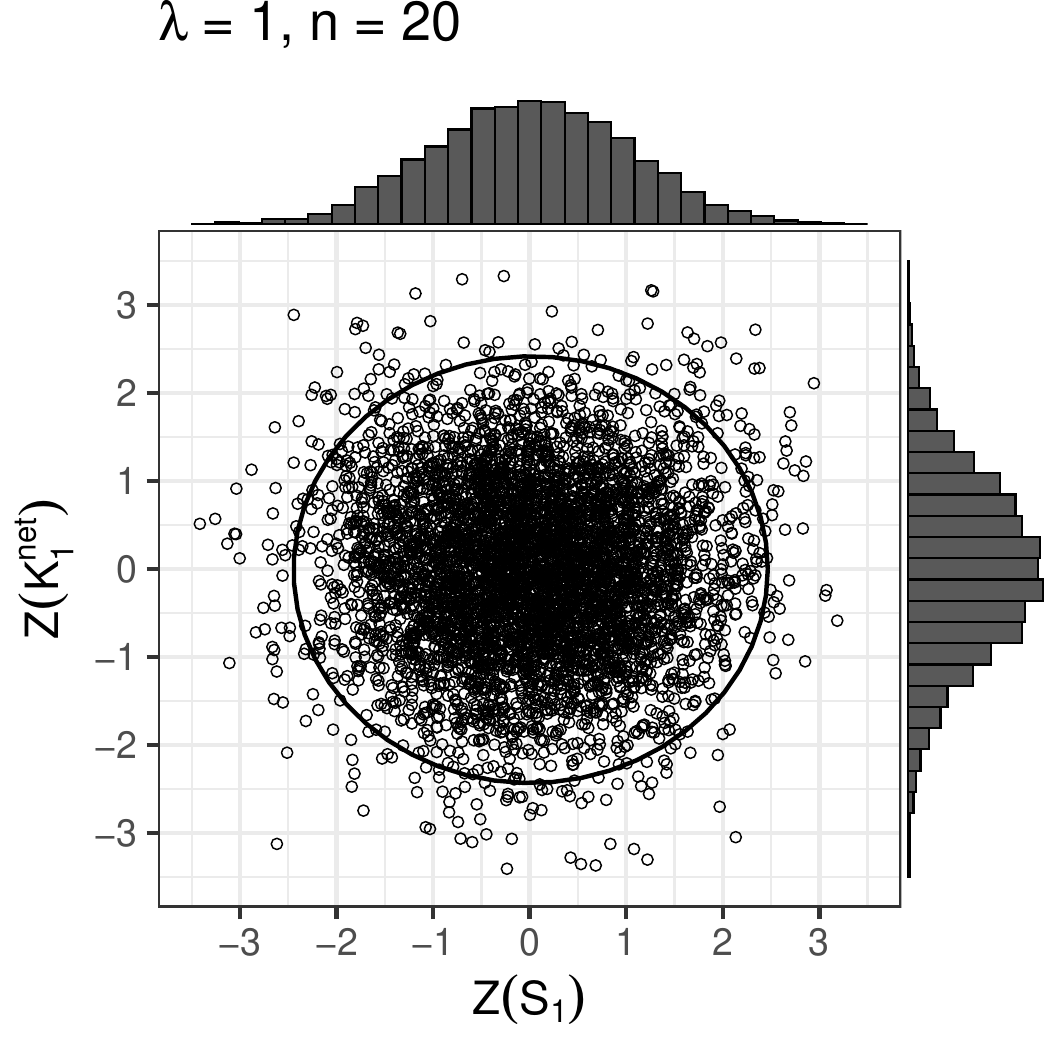}
    \includegraphics[width=0.3\textwidth]{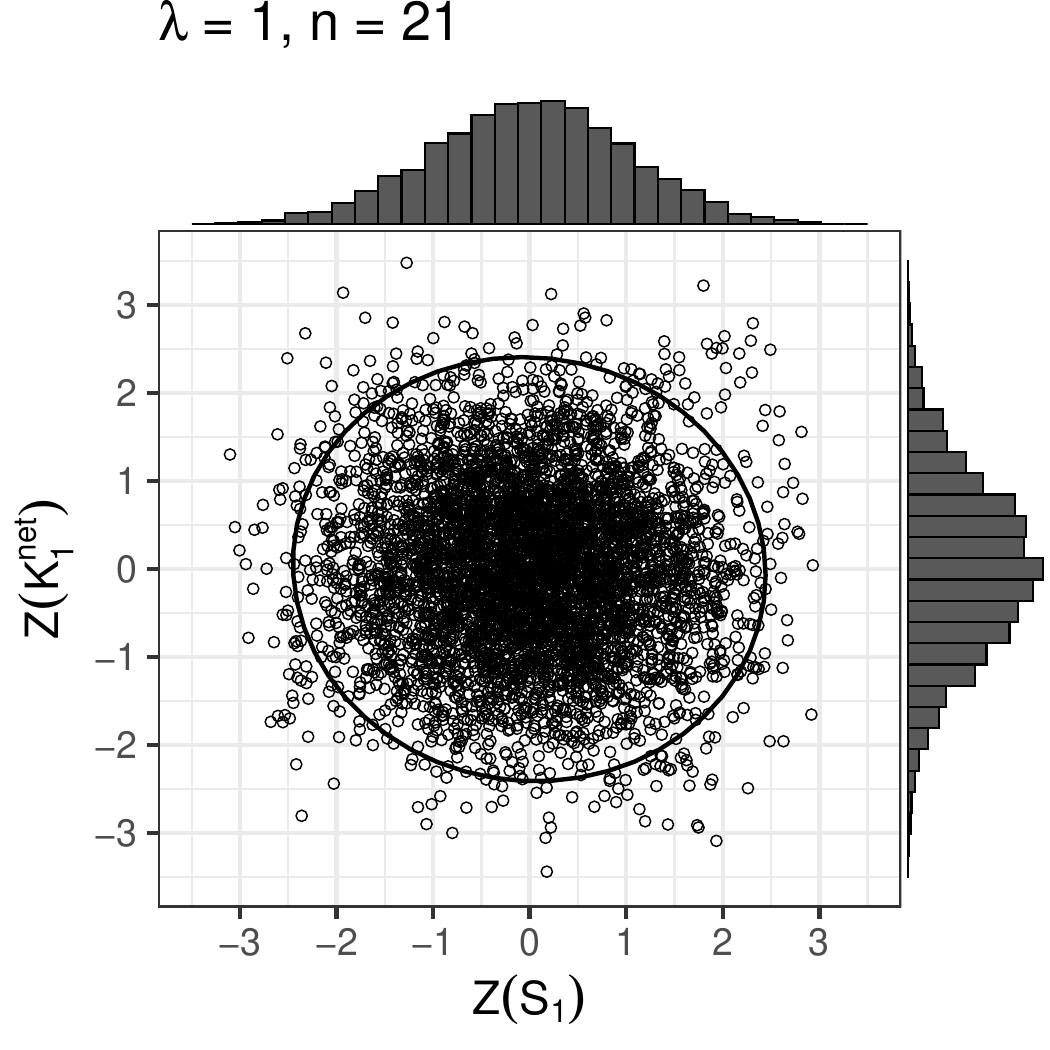}
    \includegraphics[width=0.3\textwidth]{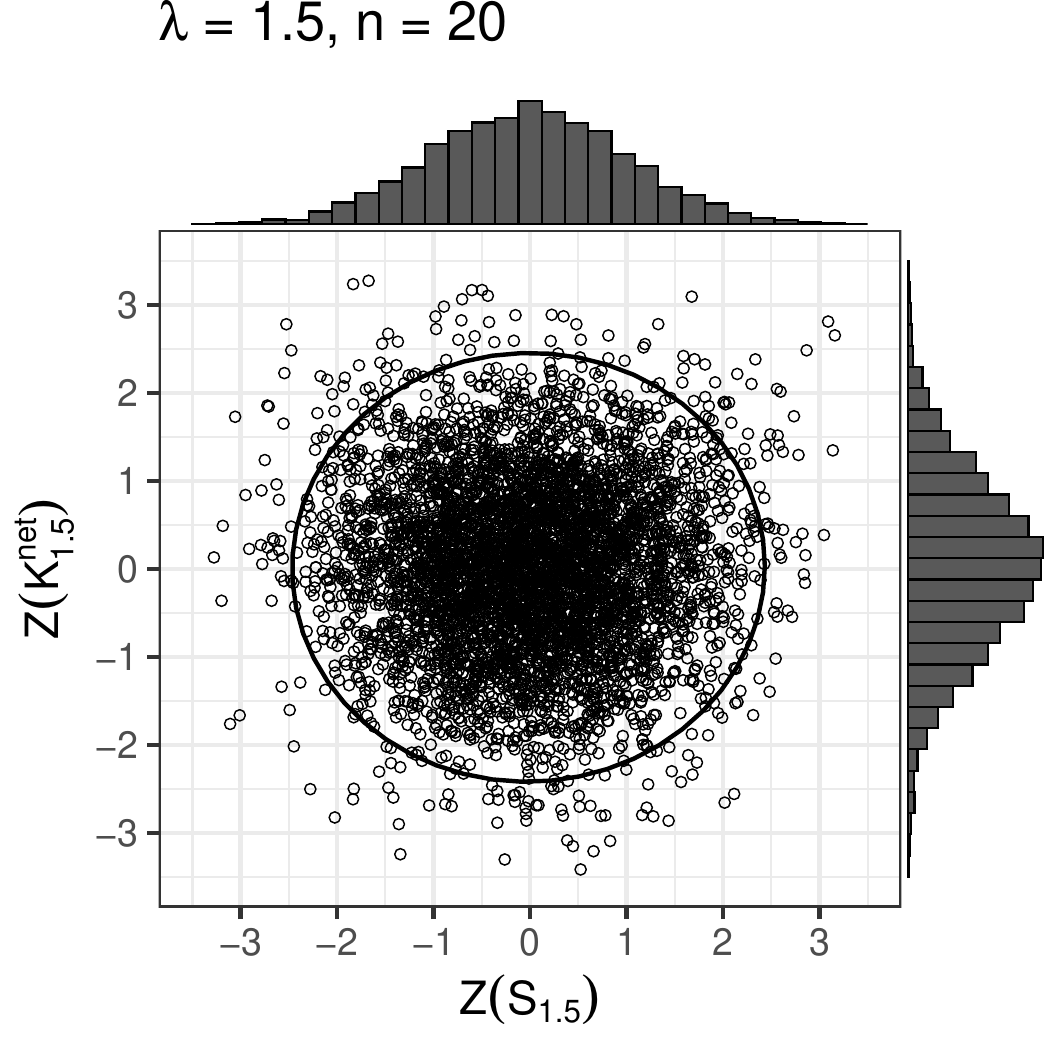}
\end{tabular}
\begin{tabular}{cc}
    \includegraphics[width=0.3\textwidth]{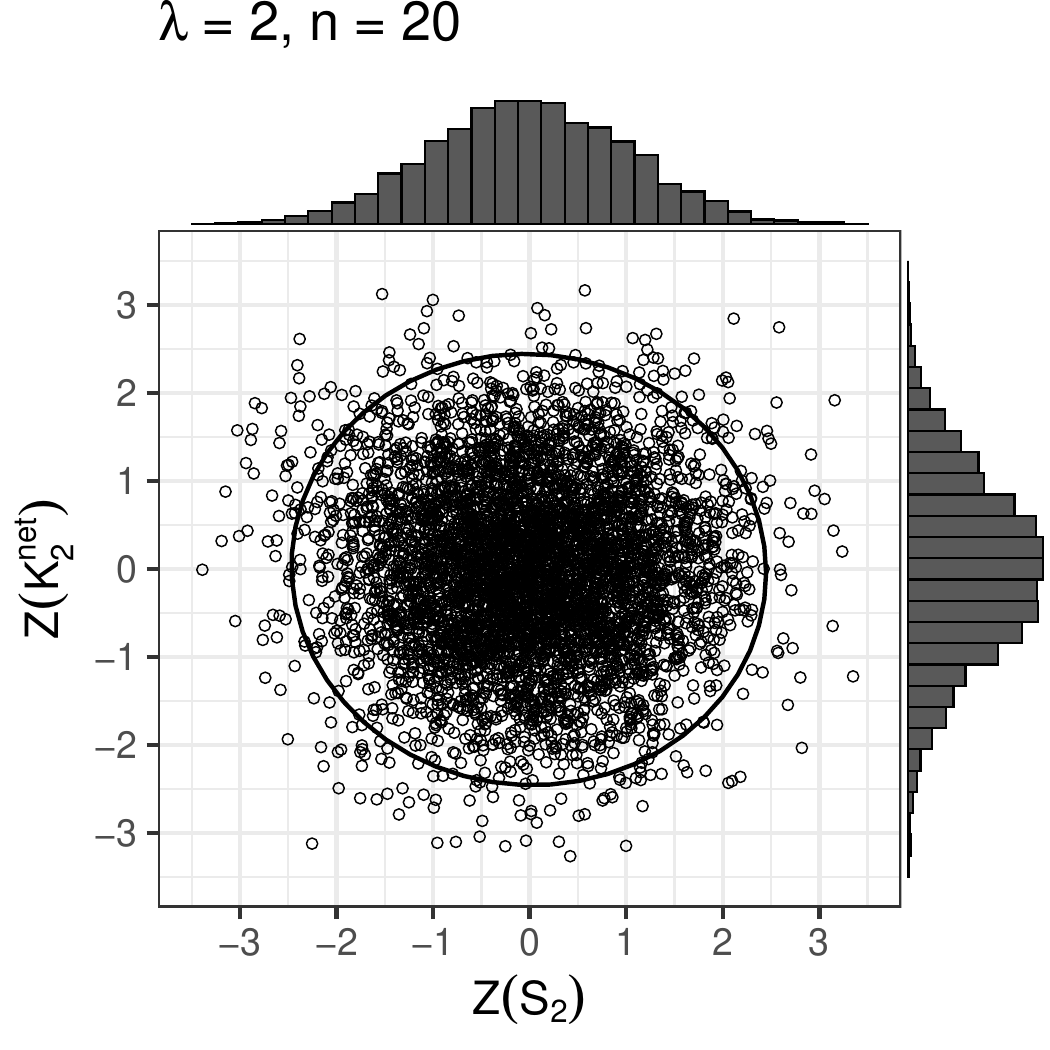}
    \includegraphics[width=0.3\textwidth]{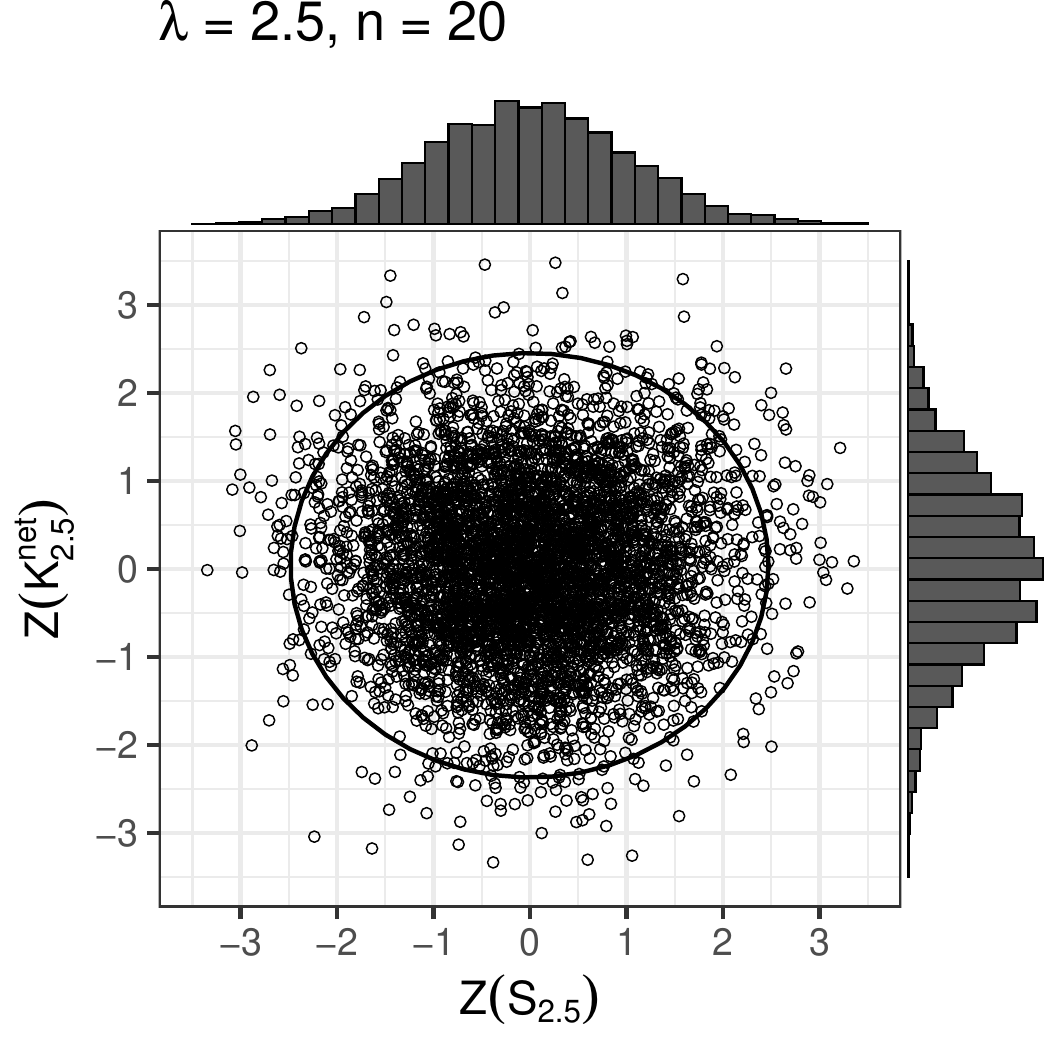}
    \includegraphics[width=0.3\textwidth]{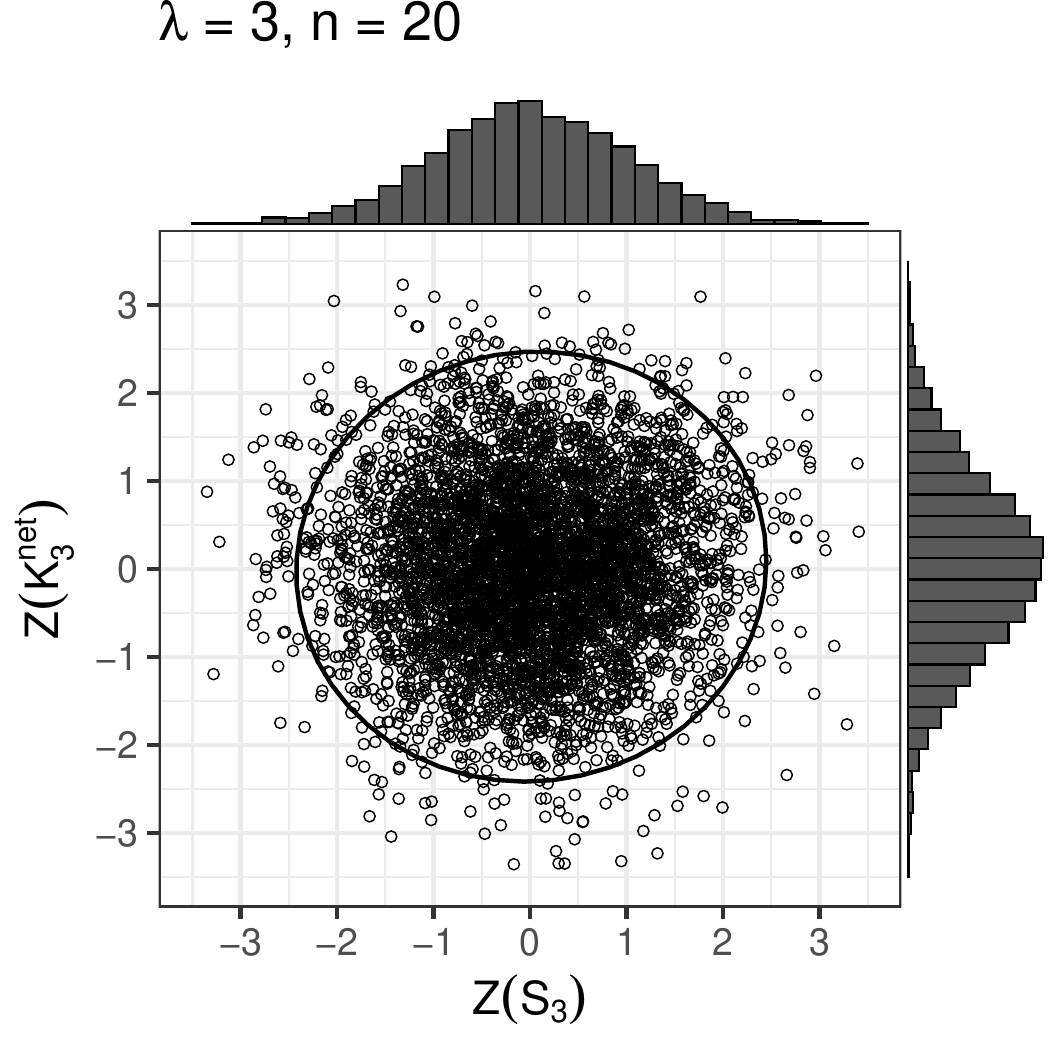}
\end{tabular}
\end{center}
\captionsetup{width=0.8\linewidth}
\vspace{-6mm}
\caption{The test statistics $Z(S_{\lambda})$ and $Z(K^{\mathrm{net}}_{\lambda})$, for 5000 samples of size $n=20$ (or $n=21$ for the case of an odd sample size when $\lambda=1$) generated under the null hypothesis, for values of $\lambda\in\{1, 1.5, 2, 2.5, 3\}$.}\label{fig:S.vs.NK}
\end{figure}

Proposition~\ref{prop:asymp.conv.H02} states that the test statistics $Z(S_{\lambda})$ and $Z(K^{\mathrm{net}}_{\lambda})$  are asymptotically
equivalent to $Z^{*\!}(S_{\lambda})$ and $Z^{*\!}(K^{\mathrm{net}}_{\lambda})$, respectively, and therefore share the same asymptotic distributions. We combine them to obtain the omnibus test statistic $X^{\mathrm{APD}}_{\lambda}$, which is asymptotically $\chi_2^2$ distributed. Proposition~\ref{prop:X.APD.finite.sample} states that these asymptotic distributions are also valid approximations for all sample sizes, and this with high numerical precision when $n\geq 20$. We assessed the quality of the fit by performing a study of the empirical level (empirical power under the null hypothesis) of the $X^{\mathrm{APD}}_{\lambda}$ statistic based on 1,000,000 simulations and on $\chi_2^2$ quantiles, for different sample sizes between 20 and 200, for $\lambda\in\{1,1.5,2,2.5,3\}$ and for significance levels $\alpha\in\{0.01,0.02,\ldots,0.15\}$. We found that the empirical and nominal levels are very close, with absolute differences less than 0.001. Furthermore, the $Z(S_{\lambda})$ and $Z(K^{\mathrm{net}}_{\lambda})$ test statistics are plotted in Figure~\ref{fig:S.vs.NK} for 5000 samples of size $n=20$ (or $n=21$ for the case of an odd sample size when $\lambda=1$) generated under the null hypothesis, for values of $\lambda\in\{1, 1.5, 2, 2.5, 3\}$. We observe that they behave like two independent standard Gaussian variables. The points outside the circle (which defines the critical region for a significance level of 5\%) are relatively well distributed, as desired.

For the omnibus test, the null hypothesis $H_0$ that the observations come from the $\mathrm{EPD}_{\lambda}(\mu,\sigma)$, with unknown $\mu$ and $\sigma$, is rejected for high values of the test statistic, specifically if the observed value of $X^{\mathrm{APD}}_{\lambda}$ is greater than the chi-square quantile $\chi^2_{2,\alpha}$, at a significance level of $\alpha$. The \textit{p}-value can be computed as $\Pr(W>X^{\mathrm{APD}}_{\lambda})$, where $W$ is a  $\chi^2_2$ distributed random variable. For the directional test against symmetric alternatives, $H_0$ is rejected if the observed test statistic $Z(K^{\mathrm{net}}_{\lambda})$ is far from 0, specifically if $Z(K^{\mathrm{net}}_{\lambda})<-z_{\alpha/2}$ or $Z(K^{\mathrm{net}}_{\lambda})>z_{\alpha/2}$, where $z_{\alpha/2}>0$ is the standard Gaussian quantile at a significance level of $\alpha$. The \textit{p}-value can be computed as $2\Pr(Z>|Z(K^{\mathrm{net}}_{\lambda})|)$, where $Z$ is a $\mathcal{N}(0,1)$ distributed random variable. We obtain the same conclusion for the directional test against asymmetric alternatives, replacing $Z(K^{\mathrm{net}}_{\lambda})$ by $Z(S_{\lambda})$.

If the null hypothesis is rejected, the cause can easily be identified. We observe that $Z(S_{\lambda})$  and $Z(K^{\mathrm{net}}_{\lambda})$ are $Z$-scores of $S_{\lambda}(\Xn)$ and $(K^{\mathrm{net}}_{\lambda}(\Xn))^{1/4}$, respectively. The further they deviate from 0, the more evidence there is for the rejection of the null hypothesis. A significant positive (resp.\ negative) value of $Z(S_{\lambda})$ suggests that the distribution is right-skewed (resp.\ left-skewed), while a value close to 0 suggests that the distribution is symmetric. A significant positive (resp.\ negative) value of $Z(K^{\mathrm{net}}_{\lambda})$ suggests that the tails of the distribution are heavier (resp.\ lighter) than those of the $\mathrm{EPD}_{\lambda}(\mu,\sigma)$, given the level of skewness.

\subsection{Asymptotic distribution of the test statistics under local alternatives}\label{sec:asymp.dist.H1}

We are interested in this section in calculating the asymptotic power of our tests, that is, the probability of correctly rejecting the null hypothesis when a specific alternative hypothesis is the true distribution, as $n\rightarrow\infty$. Since the asymptotic power of a well-designed test is equal to 1 for any fixed alternative different from the null hypothesis, we instead consider local alternatives (as was done by \citet{MR2442221} in the context of a Pareto distribution) that approach the $\mathrm{EPD}_{\lambda}(\mu,\sigma)\leqdef\mathrm{APD}_{\lambda}(1/2,\lambda,\mu,\sigma)$  as $n\rightarrow\infty$.

We then refine our general alternative hypothesis $H_1 : X_i\sim \mathrm{APD}_{\lambda}(\theta_1,\theta_2,\mu,\sigma)$, $(\theta_1,\theta_2)\neq (1/2,\lambda)$, by a family of local alternatives, defined as
\begin{equation}\label{eq:H1n.X}
    H_{\hspace{-0.2mm}1\hspace{-0.1mm},\hspace{-0.2mm}n}(\delta_1,\delta_2) : X_i \sim \mathrm{APD}_{\lambda}(\theta_{\hspace{-0.2mm}1\hspace{-0.1mm},\hspace{-0.2mm}n},
   \theta_{2\hspace{-0.3mm},\hspace{-0.3mm}n},\mu,\sigma), \quad \theta_{\hspace{-0.2mm}1\hspace{-0.1mm},\hspace{-0.2mm}n} = 1/2 +
   \frac{\delta_1}{\sqrt{n}} (1 + o(1)), \quad \theta_{2\hspace{-0.3mm},\hspace{-0.3mm}n} = \lambda + \frac{\delta_2}{\sqrt{n}} (1 + o(1)),
\end{equation}
where $(\delta_1,\delta_2)\in\R^2\backslash\{(0,0)\}$ are fixed (but arbitrary) for the omnibus test. We fix $\delta_2=0$ ($\theta_{2\hspace{-0.3mm},\hspace{-0.3mm}n}=\lambda$) for the test against asymmetric alternatives or $\delta_1=0$ ($\theta_{\hspace{-0.2mm}1\hspace{-0.1mm},\hspace{-0.2mm}n}=1/2$) for the test against symmetric alternatives. The notation $o(1)$ should be interpreted as any functions converging to 0 as $n\rightarrow\infty$. The constants $\delta_1$ and $\delta_2$ indicate the direction of the alternative relative to the $\mathrm{EPD}_{\lambda}(\mu,\sigma)$ specified in the null hypothesis. More precisely, $\delta_1<0$ (resp.\ $\delta_1>0$) represents a right-skewed (resp.\ left-skewed) alternative and $\delta_2<0$ (resp.\ $\delta_2 >0$) results in an alternative with heavier (resp.\ lighter) tails.

\begin{theorem}\label{thm:asymp.conv.H1n}
Under the local alternatives $H_{\hspace{-0.2mm}1\hspace{-0.1mm},\hspace{-0.2mm}n}(\delta_1,\delta_2)$, for fixed $\lambda\geq 1$ and $\delta_1,\delta_2\in\R$, we have, as $n\to\infty$,
\begin{equation}\label{eq:conv.ZSKstar.H1n}
    \begin{pmatrix}
        Z^{*\!}(S_{\lambda})\\[1mm]
        Z^{*\!}(K_{\lambda})
    \end{pmatrix}
    \stackrel{\mathcal{D}}{\longrightarrow} \mathcal{N}_2
    \left(
    \begin{pmatrix}
        -\delta_1 \Vone^{1/2} \\[1mm]
        -\delta_2 \Vtwo^{1/2}
    \end{pmatrix},
    \begin{pmatrix}
        1 & 0 \\[1mm]
        0 & 1
    \end{pmatrix}
    \right)
    \quad \text{ and } \quad \big(Z^{*\!}(S_{\lambda})\big)^2+\big(Z^{*\!}(K_{\lambda})\big)^2\stackrel{\mathcal{D}}{\longrightarrow}\chi_2^2(\delta_1^2\,
    \Vone+\delta_2^2 \, \Vtwo),
\end{equation}
where $Z^{*\!}(S_{\lambda})$, $Z^{*\!}(K_{\lambda})$  are given in Definition~\ref{def:asymp.stats} and $\delta_1^2\, \Vone+\delta_2^2 \,\Vtwo$ is the noncentrality parameter of the $\chi_2^2$ distribution, with
\begin{equation}
    \Vone=4(1 + \lambda) - \frac{4 \lambda^2}{\Gamma(2-1/\lambda) \Gamma(1/\lambda)}\quad \text{ and } \quad
    \Vtwo=\frac{(1+1/\lambda) \psi_1(1+1/\lambda) - 1}{\lambda^3}.
\end{equation}
\end{theorem}

The proof of Theorem~\ref{thm:asymp.conv.H1n} is presented in the Supplementary Material Section~B.7.
As a consequence, we obtain analogous convergence results for our test statistics adapted for small to moderate sample sizes.

\begin{corollary}\label{prop:asymp.conv.H1n}
Under the local alternatives $H_{\hspace{-0.2mm}1\hspace{-0.1mm},\hspace{-0.2mm}n}(\delta_1,\delta_2)$, for fixed $\lambda\geq 1$ and $\delta_1,\delta_2\in\R$, we have, as $n\to\infty$,
\begin{equation}
    Z(S_{\lambda})/Z^{*\!}(S_{\lambda})\stackrel{\mathrm{a.s.}}{\longrightarrow}1,\quad  \quad
    Z(K^{\mathrm{net}}_{\lambda})/Z^{*\!}(K_{\lambda})\stackrel{\mathcal{P}}{\longrightarrow}1,
\end{equation}
\begin{equation}\label{eq:conv.ZSK.H1n}
    \begin{pmatrix}
        Z(S_{\lambda})\\[1mm]
        Z(K^{\mathrm{net}}_{\lambda})
    \end{pmatrix}
    \stackrel{\mathcal{D}}{\longrightarrow} \mathcal{N}_2
    \left(
    \begin{pmatrix}
        -\delta_1 \Vone^{1/2} \\[1mm]
        -\delta_2 \Vtwo^{1/2}
    \end{pmatrix},
    \begin{pmatrix}
        1 & 0 \\[1mm]
        0 & 1
    \end{pmatrix}
    \right)
    \quad \text{ and } \quad X^{\mathrm{APD}}_{\lambda}\stackrel{\mathcal{D}}{\longrightarrow}\chi_2^2(\delta_1^2\,
    \Vone+\delta_2^2 \, \Vtwo),
\end{equation}
where $Z(S_{\lambda})$, $Z(K^{\mathrm{net}}_{\lambda})$ and $X^{\mathrm{APD}}_{\lambda}$ are given in Definition~\ref{def:APD.tests}, $\Vone$ and $\Vtwo$ are given in Theorem~\ref{thm:asymp.conv.H1n}.
\end{corollary}
The proof of Corollary~\ref{prop:asymp.conv.H1n} is given in Section~B.9. Corollary~\ref{prop:asymp.conv.H1n} states that
$Z^{*\!}(S_{\lambda})$ and $Z(S_{\lambda})$, like $Z^{*\!}(K_{\lambda})$ and $Z(K^{\mathrm{net}}_{\lambda})$, are asymptotically equivalent also under local alternatives. We are therefore able to compute the asymptotic powers of our test statistics under local alternatives. For example, if the true distribution is $H_{\hspace{-0.2mm}1\hspace{-0.1mm},\hspace{-0.2mm}n}(\delta_1,\delta_2)$ for fixed values of $(\delta_1,\delta_2)\in\R^2\backslash\{(0,0)\}$, the asymptotic power of the omnibus test $X^{\mathrm{APD}}_{\lambda}$ is given by $\Pr(W> \chi^2_{2,\alpha})$, where $\chi^2_{2,\alpha}$ is the quantile of the $\chi_2^2$ distribution at a significance level of $\alpha$ and $W\sim \chi_2^2(\delta_1^2\, V_{1\hspace{-0.2mm},\hspace{-0.2mm}\lambda} +\delta_2^2 \, V_{2\hspace{-0.2mm},\hspace{-0.2mm}\lambda})$. If the true distribution is $H_{\hspace{-0.2mm}1\hspace{-0.1mm},\hspace{-0.2mm}n}(0,\delta_2)$ for a fixed value of $\delta_2\neq 0$, the asymptotic power of the test $Z(K^{\mathrm{net}}_{\lambda})$ is given by $\Pr(Z<-z_{\alpha/2})+\Pr(Z>z_{\alpha/2})$, where $z_{\alpha/2}$ is the quantile of the $\mathcal{N}(0,1)$ distribution at a significance level of $\alpha$ and $Z\sim \mathcal{N}(-\delta_2 V_{2\hspace{-0.2mm},\hspace{-0.2mm}\lambda}^{1/2},1)$.  We obtain similar results for the test $Z(S_{\lambda})$. Note that the asymptotic powers are the same regardless of the sign of $\delta_1$ and $\delta_2$  by symmetry of the normal distribution.

In Figure~\ref{fig:power.curves.local.alternatives}, the asymptotic power curves are shown under local alternatives for the case $\lambda=1$ and a significance level of $\alpha=5\%$. In this case, the critical values are $\chi_{2,0.05}^2=5.991465$ and $z_{0.05/2}=1.959964$. On the left graph, the power curves of $X^{\mathrm{APD}}_{\lambda}$ and $Z(S_{\lambda})$ are compared under local alternatives, $H_{\hspace{-0.2mm}1\hspace{-0.1mm},\hspace{-0.2mm}n}(\delta_1, 0)$, for values of $0\leq \delta_1\leq 3$. We observe that the power of the directional test against asymmetric alternatives $Z(S_{\lambda})$ is uniformly higher than that of the omnibus test $X^{\mathrm{APD}}_{\lambda}$, as expected given the asymmetric family of alternatives. We also see that the power reaches its minimum at $\delta_1=0$, where it is equal to the significance level $\alpha=0.05$, and gradually increases from 0.05 to 1 as $\delta_1$ moves away from 0, as expected. On the right graph, the power curves of $X^{\mathrm{APD}}_{\lambda}$ and $Z(K^{\mathrm{net}}_{\lambda})$ are compared under local alternatives, $H_{\hspace{-0.2mm}1\hspace{-0.1mm},\hspace{-0.2mm}n}(0, \delta_2)$, for values of $0\leq \delta_2\leq 12$. We observe similar results.

\begin{figure}[H]
\begin{center}
\begin{tabular}{cc}
    \includegraphics[width=7cm]{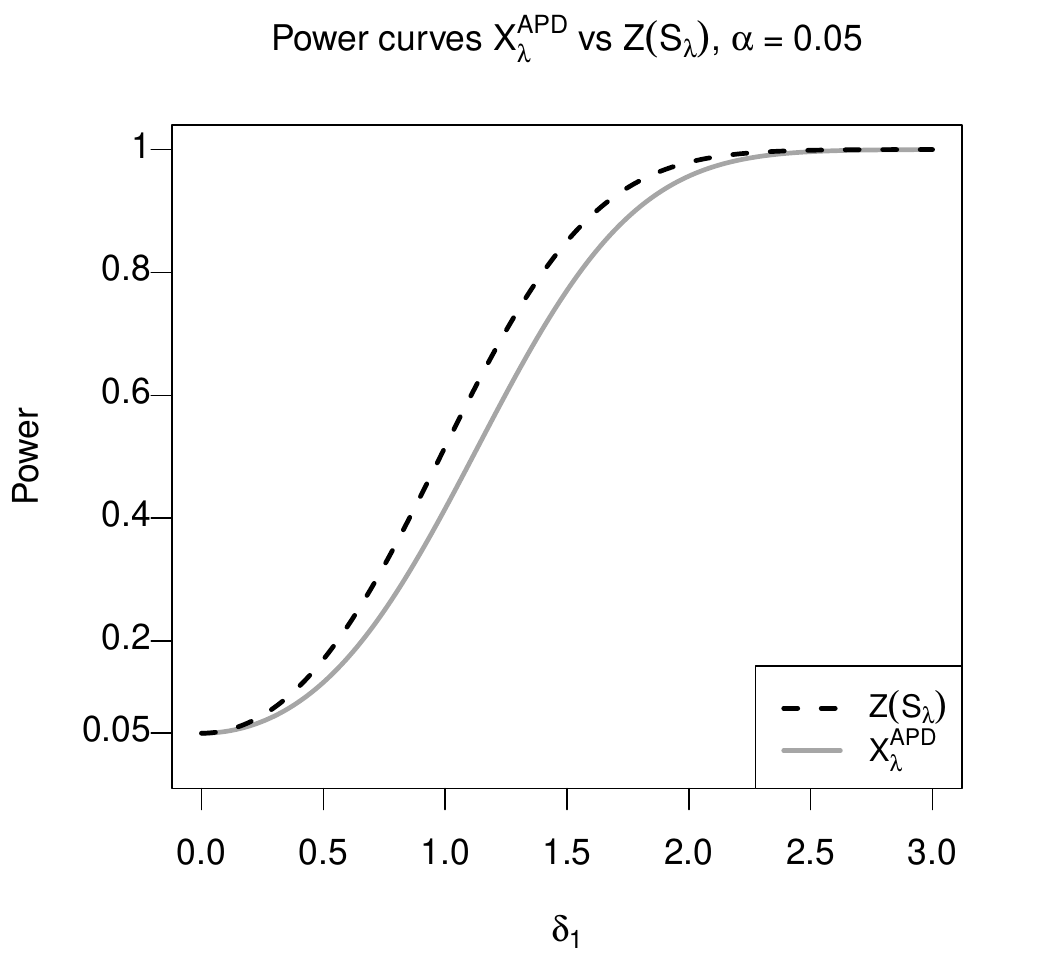}
    \includegraphics[width=7cm]{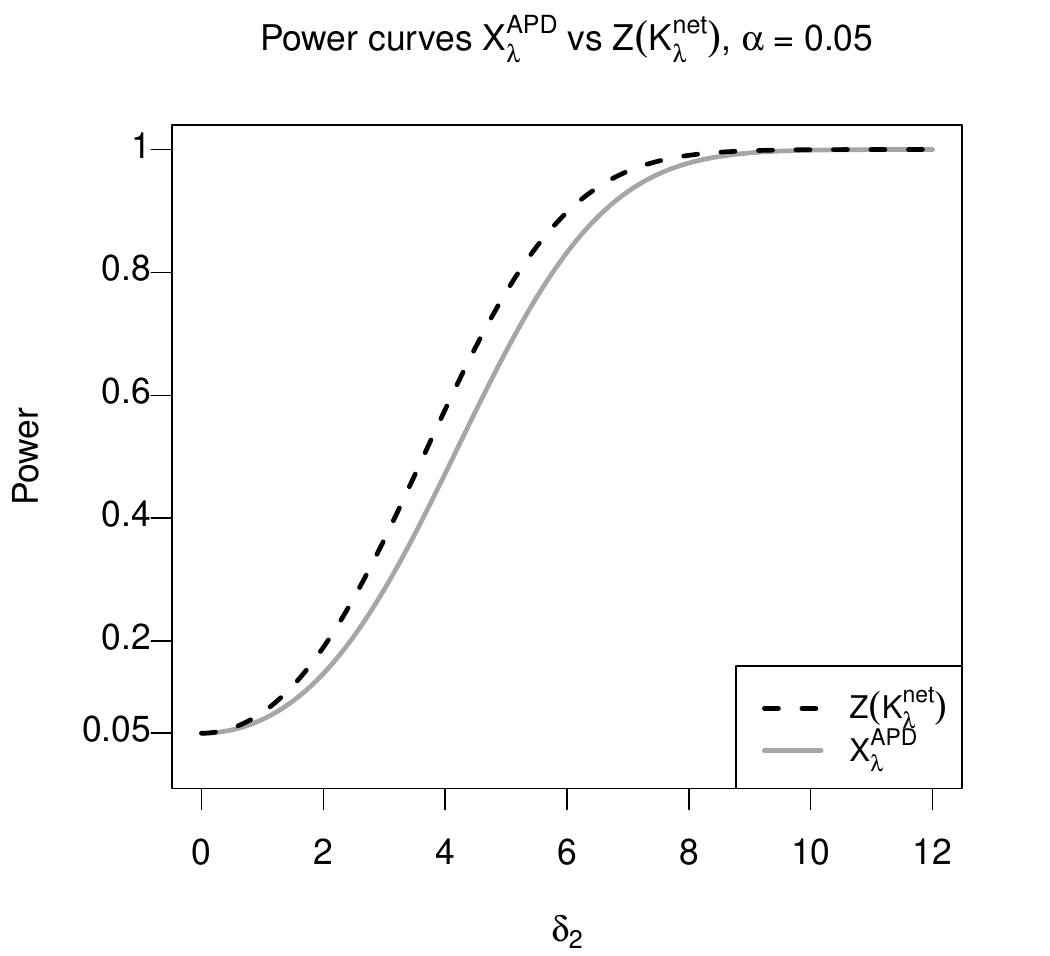}
\end{tabular}
\end{center}
\captionsetup{width=0.8\linewidth}
\vspace{-8mm}
\caption{Asymptotic power curves of $X^{\mathrm{APD}}_{\lambda}$ versus $Z(S_{\lambda})$ (left) or $Z(K^{\mathrm{net}}_{\lambda})$ (right),
    under local alternatives as a function of $\delta_1$ (left) or $\delta_2$ (right), for a fixed value of $\delta_2=0$ (left) or $\delta_1=0$ (right).
    We set $\lambda=1$ and a significance level $\alpha=0.05$.}\label{fig:power.curves.local.alternatives}
\end{figure}

\subsection{Tests of fit for Laplace and Gaussian distributions}\label{sec:Laplace.Gaussian}

In this section, we summarize our previous results for the specific cases where $\lambda$ is set to 1 or 2, namely the goodness-of-fit tests for Laplace and Gaussian distributions. We obtain simplified formulas which are worth presenting.

\subsubsection{Tests of fit for the Laplace distribution}

The omnibus test statistic $X^{\mathrm{APD}}_{1}$ for testing the null hypothesis
\begin{align*}\label{eq:H0.Laplace}
    H_0 : \Xn \sim\text{Laplace}(\mu,\sigma)=\mathrm{EPD}_{1}(\mu,\sigma), \text{ where } \mu \text{ and } \sigma \text{ are unknown,}
\end{align*}
is given by
\begin{equation}\label{eq:Xapd1}
    X^{\mathrm{APD}}_{1}\leqdef Z^2(S_{1})+Z^2(K^{\mathrm{net}}_{1}),
\end{equation}
where (using Definition~\ref{def:APD.tests} with $2\psi_1(2)=\pi^2/3-2$)
%\begin{equation}\label{eq:ZSKstar1}
%    Z^{*\!}(S_{1})\leqdef n^{1/2} S_{1}(\Xn),\quad Z^{*\!}(K_{1})\leqdef \frac{n^{1/2}\left(K_{1}(\Xn)-(1-\gamma)\right)}{(\pi^2/3-3)^{1/2}},\text{ and}
%\end{equation}
%\begin{equation}
%    X^{\mathrm{APD}^{*}}_{1}\leqdef \big(Z^{*\!}(S_{1})\big)^2+\big(Z^{*\!}(K_{1})\big)^2,
%\end{equation}
\begin{equation}\label{eq:ZSK1even}
    Z(S_{1})\leqdef\frac{n^{1/2} S_{1}(\Xn)}{\big(1 - 1.856/n^{1.06}\big)^{1/2}},\,\,\,\,
    Z(K^{\mathrm{net}}_{1})\leqdef \frac{n^{1/2}\Big(\big(K^{\mathrm{net}}_{1}(\Xn)\big)^{1/4} - (1-\gamma)^{1/4}\left(1 -
    0.422/n^{1.01}\right)\Big)}{\Big[\frac{1}{16}(1-\gamma)^{-3/2} (\pi^2/3-3)\left(1 -
    1.950/n^{0.92}+ 39.349/n^{2.3}\right)\Big]^{1/2}},
\end{equation}
for an even sample size $n$, and by
\begin{equation}\label{eq:ZSK1odd}
    Z(S_{1})\leqdef\frac{n^{1/2} S_{1}(\Xn)}{\big(1 - 0.281/n^{1.03}\big)^{1/2}},\quad
    Z(K^{\mathrm{net}}_{1})\leqdef \frac{n^{1/2}\Big(\big(K^{\mathrm{net}}_{1}(\Xn)\big)^{1/4} - (1-\gamma)^{1/4}\left(1 -
    0.198/n^{0.86}\right)\Big)}{\Big[\frac{1}{16}(1-\gamma)^{-3/2} (\pi^2/3-3)\left(1 -
    3.827/n^{1.04}\right)\Big]^{1/2}},
\end{equation}
for an odd sample size $n$, where the Euler-Mascheroni constant is given by
\begin{equation}
    \gamma\leqdef -\psi(1)=0.577215665\ldots
\end{equation}
The first-power skewness and the first-power kurtosis (see Definition~\ref{def:S.K.lambda}) are given by
\begin{equation}
    S_{1}(\Xn)\leqdef \frac{1}{n} \sum_{i=1}^n Y_i=\hat{\sigma}_{1}^{-1}\big(\bar{X}-\mathrm{median}(\Xn)\big)
    \quad \text{ and } \quad K_{1}(\Xn) \leqdef \frac{1}{n} \sum_{i=1}^n  |Y_i|\log|Y_i|,
\end{equation}
where $Y_i \leqdef \hat{\sigma}_{1}^{-1} (X_i - \hat{\mu}_{1})$, while the first-power net kurtosis (see Definition~\ref{def:net.S.K.lambda}) is given by
\begin{equation}
    K^{\mathrm{net}}_{1}(\Xn)\leqdef \max\big(0,K_{1}(\Xn)-(1/2)S^2_{1}(\Xn)\big),
\end{equation}
where the maximum likelihood estimators under $H_0$ (see Proposition~\ref{prop:MLE.estimators}) are given by
\begin{equation}
    \hat{\mu}_{1}=\mathrm{median}(\Xn)\quad \text{ and } \quad \hat{\sigma}_{1} = \frac{1}{n} \sum_{i=1}^n |X_i - \hat{\mu}_{1}|.
\end{equation}

Under the local alternatives $H_{\hspace{-0.2mm}1\hspace{-0.1mm},\hspace{-0.2mm}n}(\delta_1,\delta_2)$ with $\lambda=1$ (see \eqref{eq:H1n.X}), for fixed $(\delta_1,\delta_2)\in\R^2\backslash\{(0,0)\}$, or under the null hypothesis if $(\delta_1,\delta_2)=(0,0)$, we have (see Corollary~\ref{prop:asymp.conv.H1n}), as $n\to\infty$,
\begin{equation}
    \begin{pmatrix}
        Z(S_{1})\\[1mm]
        Z(K^{\mathrm{net}}_{1})
    \end{pmatrix}
    \stackrel{\mathcal{D}}{\longrightarrow} \mathcal{N}_2
    \left(
    \begin{pmatrix}
        -2\delta_1 \\[1mm]
        -(\pi^2/3 - 3)^{1/2}\delta_2
    \end{pmatrix},
    \begin{pmatrix}
        1 & 0 \\[1mm]
        0 & 1
    \end{pmatrix}
    \right)
    \quad \text{ and } \quad X^{\mathrm{APD}}_{1}\stackrel{\mathcal{D}}{\longrightarrow}\chi_2^2\big(4\delta_1^2+ (\pi^2/3 - 3)\delta_2^2\big).
\end{equation}
Asymptotic power curves for $Z(S_{1})$, $Z(K^{\mathrm{net}}_{1})$ and $X^{\mathrm{APD}}_{1}$ under local alternatives are illustrated in Figure~\ref{fig:power.curves.local.alternatives}.

%Analogously, for a random variable $X$ with an arbitrary distribution function $F$, the first-power skewness, first-power kurtosis and first-power net kurtosis are defined, respectively, as
%\begin{equation*}
%  S_{1}(F)\leqdef\EE(Y)=\sigma_{1}^{-1}(\EE(X)-\mu_{1}),\quad K_{1}(F)\leqdef \EE(|Y|\log|Y|),
%\end{equation*}
%and
%\begin{equation*}
%      K^{\mathrm{net}}_{\lambda}(F)\leqdef \max\big(0,K_{\lambda}(F)-(1/2)S^2_{\lambda}(F)\big),
%\end{equation*}
%where $Y=\sigma_{1}^{-1}(X-\mu_{1})$, with  $\mu_{1}=\mathrm{median}(X)$ and $\sigma_{1}=\EE|X-\mu_{1}|$.
%
%For symmetric distributions, we have $S_1(F)=0$ and $K_1(F)=K^{\mathrm{net}}_{1}(F)$. For example, $K_1(F)=0.193$ for an uniform, $K_1(F)=0.284$ for a Gaussian, $K_1(F)=0.314$ for a $t_{10}$, $K_1(F)=0.386$ for a $t_4$, $K_1(F)=0.423$ for a Laplace, $K_1(F)=0.452$ for a $t_3$ and $K_1(F)=0.693$ for a $t_2$. If we consider the (asymmetric) gamma distribution with a shape parameter $a$, then $S_1(F)=0.443, K_1(F)= 0.450, K^{\mathrm{net}}_{1}(F)  = 0.352$ if $a=1$ (exponential distribution), $S_1(F)=0.213,  K_1(F)= 0.320, K^{\mathrm{net}}_{1}(F) = 0.298$ if $a=4$ and  $S_1(F)=0.076,  K_1(F)= 0.288, K^{\mathrm{net}}_{1}(F) = 0.285$ if $a=30$.

\subsubsection{Tests of fit for the Gaussian distribution}

The omnibus test statistic $X^{\mathrm{APD}}_{2}$ for testing the null hypothesis
\begin{align*}\label{eq:H0.Gaussian}
   H_0 : \Xn \sim \mathcal{N}(\mu,\sigma)=\mathrm{EPD}_{2}(\mu,\sigma), \text{ where } \mu \text{ and } \sigma \text{ are unknown,}
\end{align*}
is given by
\begin{equation}\label{eq:Xapd2}
    X^{\mathrm{APD}}_{2}\leqdef Z^2(S_{2})+Z^2(K^{\mathrm{net}}_{2}),
\end{equation}
 where (using Definition~\ref{def:APD.tests} with $2\Gamma(3/2)=\Gamma(1/2)=\sqrt{\pi}$, $\psi(1/2)=-2\log(2)-\gamma$ and $\psi_1(3/2)=\pi^2/2-4$)
\begin{equation}\label{eq:ZS2}
    Z(S_{2})\leqdef\frac{n^{1/2} S_{2}(\Xn)}{\Big((3-8/\pi)\big(1 -
    1.890/n^{0.99}\big)\Big)^{1/2}},
\end{equation}
\begin{equation}\label{eq:ZK2}
    Z(K^{\mathrm{net}}_{2})\leqdef \frac{n^{1/2}\Big(\big(K^{\mathrm{net}}_{2}(\Xn)\big)^{1/4} - \big((2-\log 2-\gamma)/2\big)^{1/4}\left(1 -
    0.788/n\right)\Big)}
    {\Big[\frac{1}{16}\big((2-\log 2-\gamma)/2\big)^{-3/2} \big((3\pi^2-28)/8\big)\left(1 -
    9.327/n^{1.05}+
    14.208/n^{1.4}\right)\Big]^{1/2}},
\end{equation}
where $\gamma\leqdef -\psi(1)=0.577215665\ldots$ is the Euler-Mascheroni constant. The second-power skewness and the second-power kurtosis (see Definition~\ref{def:S.K.lambda}) are given by
\begin{equation}\label{eq:S.K.lambda2}
    S_{2}(\Xn) \leqdef \frac{1}{n} \sum_{i=1}^n Y_i^2 \mathrm{sign}(Y_i) \quad \text{and} \quad
    K_{2}(\Xn) \leqdef \frac{1}{n} \sum_{i=1}^n  Y_i^2 \log|Y_i|,
\end{equation}
where $Y_i \leqdef \hat{\sigma}_{2}^{-1} (X_i - \hat{\mu}_{2})$, while the second-power net kurtosis (see Definition~\ref{def:net.S.K.lambda}) is given by
\begin{equation}
    K^{\mathrm{net}}_{2}(\Xn)\leqdef \max\big(0,K_{2}(\Xn)-S^2_{2}(\Xn)\big),
\end{equation}
where the maximum likelihood estimators under $H_0$ (see Proposition~\ref{prop:MLE.estimators}) are given by
\begin{equation}
    \hat{\mu}_{2}=\bar{X}\quad \text{ and } \quad \hat{\sigma}_{2} = \Big(\frac{1}{n} \sum_{i=1}^n (X_i - \bar{X})^2\Big)^{1/2}.
\end{equation}

Under the local alternatives $H_{\hspace{-0.2mm}1\hspace{-0.1mm},\hspace{-0.2mm}n}(\delta_1,\delta_2)$ with $\lambda=2$ (see \eqref{eq:H1n.X}), for fixed
$(\delta_1,\delta_2)\in\R^2\backslash\{(0,0)\}$, or under the null hypothesis if $(\delta_1,\delta_2)=(0,0)$, we have (see Corollary~\ref{prop:asymp.conv.H1n}), as $n\to\infty$,
\begin{equation}
    \begin{pmatrix}
        Z(S_{2})\\[1mm]
        Z(K^{\mathrm{net}}_{2})
    \end{pmatrix}
    \stackrel{\mathcal{D}}{\longrightarrow} \mathcal{N}_2
    \left(
    \begin{pmatrix}
        -2(3-8/\pi)^{1/2}\delta_1 \\[1mm]
        -(1/2)\big((3\pi ^ 2  - 28)/ 8\big)^{1/2}\delta_2
    \end{pmatrix},
    \begin{pmatrix}
        1 & 0 \\[1mm]
        0 & 1
    \end{pmatrix}
    \right)
\end{equation}
and
\begin{equation}
    X^{\mathrm{APD}}_{2}\stackrel{\mathcal{D}}{\longrightarrow}\chi_2^2\Big(4(3-8/\pi)\delta_1^2+ (1/4)\big((3\pi ^ 2  - 28)/ 8\big)\delta_2^2\Big).
\end{equation}

%Note that so far this special case $\lambda = 2$ has already been covered in \cite{Desgagne_Lafaye_2018}. However, everything that follows in this section differs, especially the asymptotic distributions under local alternatives which are treated here for the first time.

\section{Three applications to real temperature data}\label{sec:example}

\subsection{Prediction errors in geostatistical modelling of ocean surface temperatures}

Mesoscale oceanography is the study of weather in the ocean at medium (`meso') scale. The most used mesoscale model MM5 (NCAR–Penn State Mesoscale Model Generation 5) enables one to make numerical ocean weather predictions. \citet{Gel2004} proposed a model to improve such predictions. Their model is the sum of two terms: one that corrects the forecasts for additive and multiplicative bias through spatio-temporal covariables, and one mean-zero stationary Gaussian spacetime stochastic process error term.

In that context, our first set of temperature data consists of prediction errors of 48-hour ahead MM5 forecasts of surface temperature measured at 96 locations in the US Pacific Northwest on 3-January-2000. The prediction error is the difference between the forecasted and observed surface temperature. This dataset is available as \texttt{bias.rda} in the \texttt{R} package \texttt{lawstat} \citep{lawstat}.

The histogram of this data (Figure~\ref{fig:histbias}) exhibits some heavier tails than the fitted $\mathcal{N}(\hat{\mu}_2=0.158,\hat{\sigma}_2=3.209)$ density (dashed line), and a peak slightly lower than the one captured by the fitted Laplace($\hat{\mu}_1=-0.163,\hat{\sigma}_1=2.409$) density (dotted line). %These two distributions, being symmetric, are unable to capture the slight right-skewed shape of the histogram.

Consequently, we decided to fit an asymmetric EPD distribution with $\lambda=1.5$, a value between $\lambda=1$ (Laplace) and $\lambda=2$ (Normal). This leads to the $\textrm{EPD}_{1.5}(\hat{\mu}_{1.5}=0.0267, \hat{\sigma}_{1.5}=2.819)$ density (bold line) which better captures the pattern of the histogram.

\begin{figure}[ht]
\centering
\includegraphics[height=7cm]{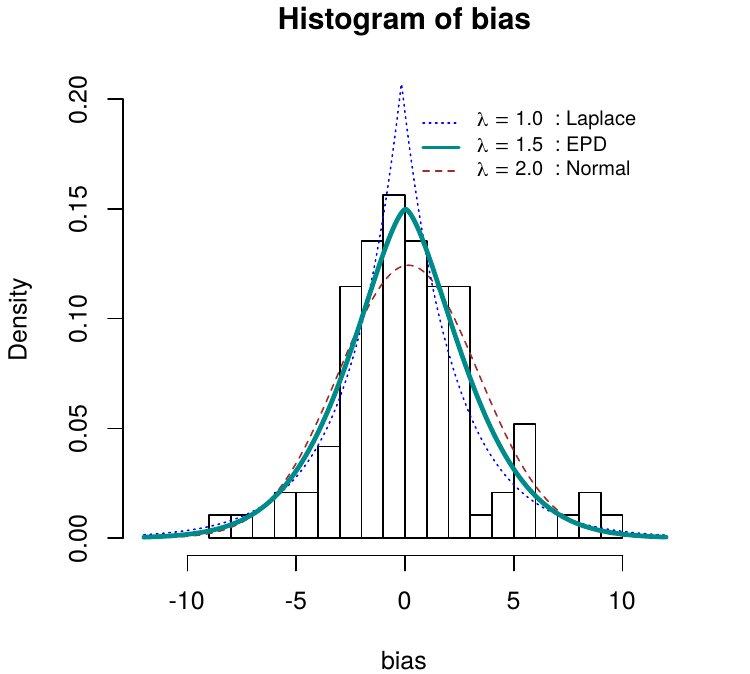}\caption{Histogram of $n=96$ prediction errors of ocean surface temperature data and the best fits obtained via a maximum likelihood approach assuming a Laplace or $\text{EPD}_1(\hat{\mu}_1,\hat{\sigma}_1)$ (dotted line), an $\text{EPD}_{1.5}(\hat{\mu}_{1.5},\hat{\sigma}_{1.5})$ (bold line) and a Gaussian or $\text{EPD}_2(\hat{\mu}_2,\hat{\sigma}_2)$ (dashed line) distribution.}\label{fig:histbias}
\end{figure}

Of course, one can apply goodness-of-fit tests to confirm these findings. \citet{Gel_et_al_2007} concluded to the non-normality of these data using the Shapiro-Wilks ($p=0.051$), Jarque-Bera ($p=0.043$) and Bonett-Seier ($p=0.036$) tests and our omnibus normality $X_{\lambda}^{\text{APD}}$ test goes in the same direction ($p=0.020$). In Table~\ref{Tab:bias}, we apply our tests for $\lambda=1.0,1.5$ and $2.0$, and we conclude that the $\textrm{EPD}_{1.5}$ is better than the normal and Laplace distributions fitted above, even if the Laplace also appears as a good fit.

\begin{figure}[ht]
\centering
\includegraphics[height=7cm]{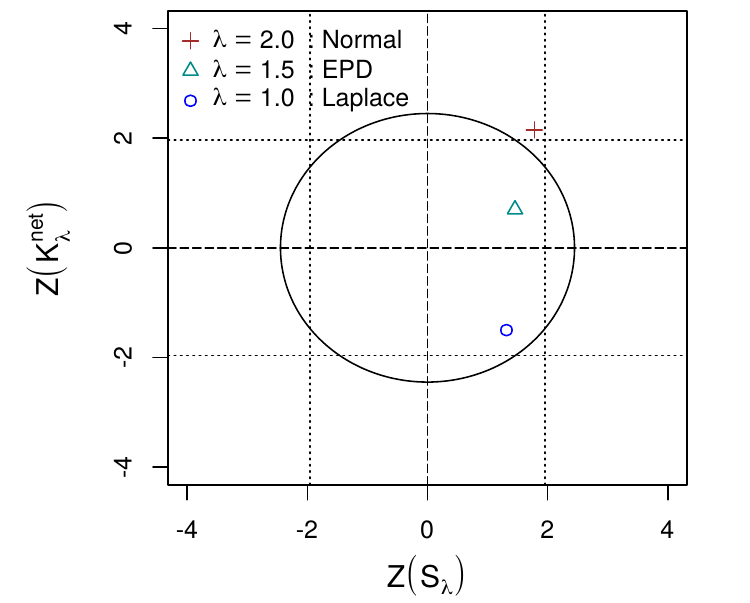}\caption{The test statistics $Z(K_{\lambda}^{\text{net}})$ versus $Z(S_{\lambda})$, for $\lambda\in\{1.0,1.5,2.0\}$. The region outside the solid circle (with a radius of $\sqrt{\chi^2_{2,0.05}}=2.44775$) defines the critical region for the test statistic $X_{\lambda}^{\text{APD}}=Z^2(S_{\lambda})+Z^2(K_{\lambda}^{\text{net}})$ at a significance level of 5\%. The region outside the two vertical (resp.\ horizontal) dotted lines, with abscissa (resp.\ ordinate) $-1.96$ and $1.96$, defines the critical region for the $Z(S_{\lambda})$ (resp.\ $Z(K_{\lambda}^{\text{net}})$) test statistic at a significance level of 5\%.}\label{fig:testsbias}
\end{figure}

\begin{table}[H]
  \centering\caption{Test statistic values and their associated $p$-values.}\label{Tab:bias}
  \begin{tabular}{c|rrr}
    \hline
    $\lambda$                 & \multicolumn{1}{c}{1.0}                   & \multicolumn{1}{c}{1.5}                   & \multicolumn{1}{c}{2.0} \\
    \hline
    $Z(S_{\lambda})$            & 1.314 (0.189)   & 1.457 (0.145)  & 1.778 (0.075) \\
    $Z(K_{\lambda}^{\text{net}})$ &  -1.501 (0.133) & 0.700 (0.484) & 2.149 (0.032) \\
    $X_{\lambda}^{\text{APD}}$    & 3.979 (0.137)  & 2.612 (0.271)  & 7.781 (0.020) \\
    \hline
    \end{tabular}
  \end{table}

The information in Table~\ref{Tab:bias} is also displayed graphically in Figure~\ref{fig:testsbias}. Overall, the $\textrm{EPD}_{1.5}$ symbol is well within the circle whereas the symbols for the two other distributions are located closer to the circle at an angle of about $\pm45^{\circ}$ which leads us to favour the former distribution.

From Figure~\ref{fig:testsbias}, for the three models ($\lambda = 1.0, 1.5$ and $2.0$), the data are right-skewed with $Z(S_{\lambda}) >0$, though these results are not significant at the 5\% level (all three symbols are located between the two vertical dotted lines with abscissa $-1.96$ and $1.96$). We also see that the data have
\begin{itemize}
\item heavier tails than the Gaussian distribution (positive value of $Z(K^{\text{net}}_2)=2.14926$; significant at the 5\% level as indicated by the $+$ symbol above the upper horizontal dotted line);
\item shorter tails than the Laplace distribution (negative value of $Z(K^{\text{net}}_1)=-1.50052$; non-significant since the $\circ$~symbol is between the two horizontal dotted lines);
\item slightly heavier tails than the $\textrm{EPD}_{1.5}$ distribution (positive value of $Z(K^{\text{net}}_{1.5})=0.69968$; clearly non-significant with a $\triangle$~symbol close to the origin).
  \end{itemize}

Based on these results, one could extend the initial model of \cite{Gel2004} to use a non-Gaussian random field error term (see, e.g., \citet{Aberg2011}). This would then allow for the estimation of probabilities of various weather scenarios by simulating $\textrm{EPD}_{1.5}$ or Laplace random fields for the errors.

\subsection{Home refrigeration temperatures and food safety}\label{refrig}

%Temperature data are often modelled using a Gaussian distribution. This distribution is used for instance by \cite{Allen_1996} for human body temperatures, by \cite{Chamberlain_et_al_1995} for ear temperatures measured using an infrared emission detection thermometer, by \cite{Issautier_et_al_1998} for core electron temperatures, by \cite{Pardo_et_al_2017} for sea surface temperatures. However, this assumption has also been questioned, for instance by \cite{DeWitt_Friedman_1979} in the context of body temperatures of ectotherms or by \cite{Schoenau_Kehrig_1990} for heating or cooling degree-days in buildings.

In this section, we analyse a dataset\footnote{Source: Q62 in XLS file at \url{https://www.foodrisk.org/resources/sendFile/49} and see \url{https://www.foodrisk.org/resources/sendFile/46} for a description of the format.} containing $n = 2,037$ refrigerator temperatures collected in a study aiming at helping consumers reduce bacterial growth and thus ensure the quality and safety of food products stored at home for U.S.\ households \citep{Kosa_et_al_2007}. For convenience, we also provide a CSV version of this file, called \texttt{refrig.csv}, in the Supplementary Material A.3. This dataset was also analysed by \cite{Pouillot_et_al_2010}. In the original file, the data are encoded with integers in $\{1,\dots,22\}$, corresponding to the temperatures (in degrees Fahrenheit) ``more than $60^{\circ}$F'', 58F, 56F, 54F, ..., 24F, 22F, 20F, and ``less than $20^{\circ}$F'', respectively.
%where the temperature (in degrees Fahrenheit) associated to integer $i$ is $(62-2i)^{\circ}$F, $i=2,\dots,21$ and where $i=1$ stands for more than $60^{\circ}$F and $i=22$ for less than $20^{\circ}$F.
In order to transform these grouped and censored data into uncensored continuous observations, we added to each one of the 20 grouped data an independent observation sampled from a $\text{Uniform}\hspace{0.2mm}(-1,1)$. We also replaced the ``less than $20^{\circ}$F'' and ``more than $60^{\circ}$F'' censored values with random Laplace($\mu=39.3,\sigma=4.23$) observations on the intervals $(-\infty, 20)$ and $(60,\infty)$, respectively, where $\mu=39.3$ and $\sigma=4.23$ are the maximum likelihood estimates provided by \cite{Pouillot_et_al_2010} using the censored data.
%In order to transform these $n$ temperatures to a continuous scale, we added to each one of them an independent observation sampled from a $\text{Uniform}\hspace{0.2mm}(-1,1)$.
An histogram of these uncensored jittered values is shown in Figure~\ref{fig.refrig} together with the best fit obtained via a maximum likelihood approach, assuming a Laplace ($\hat{\mu}_1=39.36,\hat{\sigma}_1= 4.23$) or a Gaussian ($\hat{\mu}_2=39.29,\hat{\sigma}_2= 6.89$) distribution. The figure makes it clear that a Laplace distribution is more appropriate for these data than a normal distribution.

%   \begin{figure}[H]
%        \captionsetup{width=.80\textwidth}
%        \centering
%        \includegraphics[width=10.5cm]{../Data/refrig.pdf}
%        \vspace{-2mm}
%        \caption{Histogram of $n=2,037$ jittered refrigerator temperatures and the best fits obtained via a maximum likelihood approach assuming a Laplace (solid line) %, an APD
%        or a  Gaussian (dotted line) distribution.}
%        \label{fig.refrig}
%    \end{figure}

\begin{figure}[H]
        \captionsetup{width=.80\textwidth}
        \centering
        \includegraphics[width=8cm]{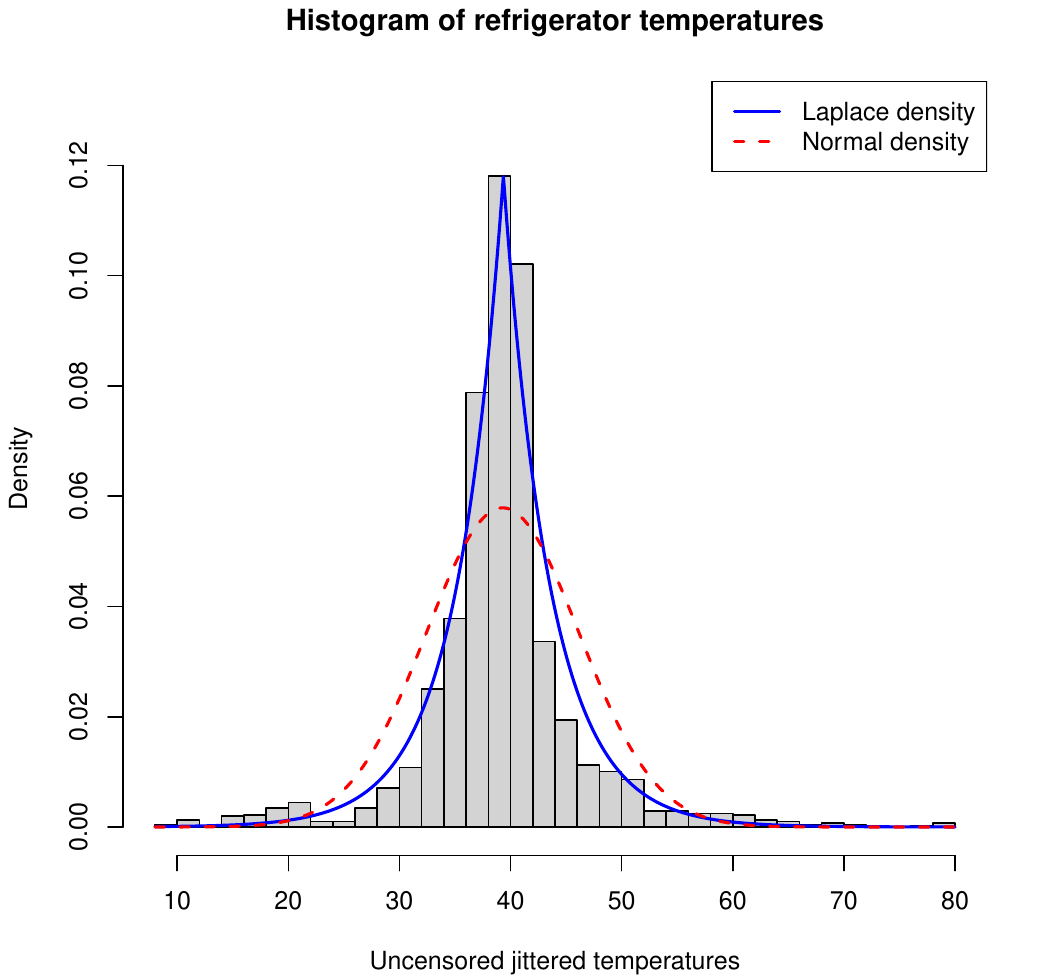}
        \vspace{-2mm}
        \caption{Histogram of $n=2,037$ uncensored jittered refrigerator temperatures and the best fits obtained via a maximum likelihood approach assuming a Laplace (solid line)
        %, an APD
        or a  Gaussian (dashed line) distribution.}
        \label{fig.refrig}
\end{figure}

Of course, for such a large sample size, any goodness-of-fit test would almost certainly reject any \textit{a priori} distributional assumption. In order to make the analysis more interesting, we considered the original $n = 2,037$ (uncensored jittered) temperatures as the population from which we sampled at random and without replacement, $1,000,000$ times, a sub-sample of (moderate) size $50$.
% among the original data. The choice of $50$ is driven by the fact that this is a sample size for which we obtained good power properties in our numerical simulations.
For each such sub-sample (of size $50$) we computed the $p$-value of our new test $X^{\mathrm{APD}}_{\lambda}$ under both the null hypothesis that $\lambda=1$ (i.e., a Laplace distribution) or under the null hypothesis that $\lambda=2$ (i.e., a normal distribution). Numerical summaries of the $p$-values for these two cases are given in Table~\ref{tab.refrig}. We also calculated that Laplacity was not rejected for 57.5\% of the sub-samples, while Gaussianity was not rejected only for 2.5\% of these sub-samples, for a significance level of 5\%. Based on these results, one can safely assume that a Laplace distribution is more appropriate than a normal distribution for these data.
%Of course, fitting a four parameters distribution such as the APD($\boldsymbol{\theta},\boldsymbol{\kappa}$) in \eqref{eq:APD.density} will give better results, at the potential risk of over-fitting.

\vspace{3mm}
\begin{table}[ht]
        \captionsetup{width=.80\textwidth}
        \centering
        \begin{tabular}{c|cccc}
            \hline
            \textbf{Case}  &     1st Quart.  &  Median  &    Mean &  3rd Quart.   \\
            \hline
            %Laplace &  0.0483707 & 0.1581436 & 0.2491241 & 0.3825969  \\
            %Normal & 0.0000047 & 0.0000684 & 0.0069311 & 0.0007528  \\
            %Laplace &  0.04679  & 0.15744 & 0.25264 & 0.39104  \\
            %Normal &   0.00000  & 0.00007 & 0.00809 & 0.00078  \\
            Laplace &  0.01412  & 0.07688 & 0.18661 & 0.27234  \\
            Normal &   0.00000  & 0.00000 & 0.00629 & 0.00016  \\
            \hline
        \end{tabular}
        \vspace{4mm}
        \caption{Numerical summary of $1,000,000$ $p$-values for our new test $X^{\mathrm{APD}}_{\lambda}$ applied on random sub-samples of size $50$ of
        the original dataset of $n=2,037$ temperatures. We test for Laplacity and for Gaussianity.}
        \label{tab.refrig}
\end{table}
\vspace{-3mm}

In order to steer clear of foodborne illnesses, the U.S.\ Food and Drug Administration recommends to ``keep the refrigerator temperature at or below $40^{\circ}$F''%
    \footnote{Source: \url{https://www.fda.gov/consumers/consumer-updates/are-you-storing-food-safely}}.
Thanks to our parametric fit using a Laplace distribution, one can estimate the proportion of refrigerators having a temperature above this threshold to be $42.9\%$ (in the population of 2007 U.S.\ households). In comparison, this proportion under the Gaussian distribution is $45.9\%$, reflecting the underestimation of the central values and the overestimation of the values in the ``shoulders''.

%Interestingly, the UK government instead considers that a temperature above $46.4^{\circ}$F ($8^{\circ}$C) is deemed dangerous%
    %
%    \footnote{Source: \url{https://www.food.gov.uk/safety-hygiene/chilling}}.
    %
 %   Using the U.K.\ threshold, the proportion of problematic refrigerators in the U.S.\ households is estimated to be only $6\%$.

\subsection{London time series temperatures}

\cite{Shea_1987} proposed an algorithm for the computation of the \textit{exact} likelihood of a multivariate time series and illustrated the methodology on a bivariate dataset of wind speed and temperature values. This data was originally studied by \cite{Piggott_1980} in an attempt to predict the residential consumption of gas in London. Here we revisit the task of building a univariate time series model for the $n=366$ daily temperature values $T_t$, $t=1,\dots,366$, represented in Figure~\ref{fig:tempLondon}.

\begin{figure}[H]
        \centering
        \includegraphics[width=7cm]{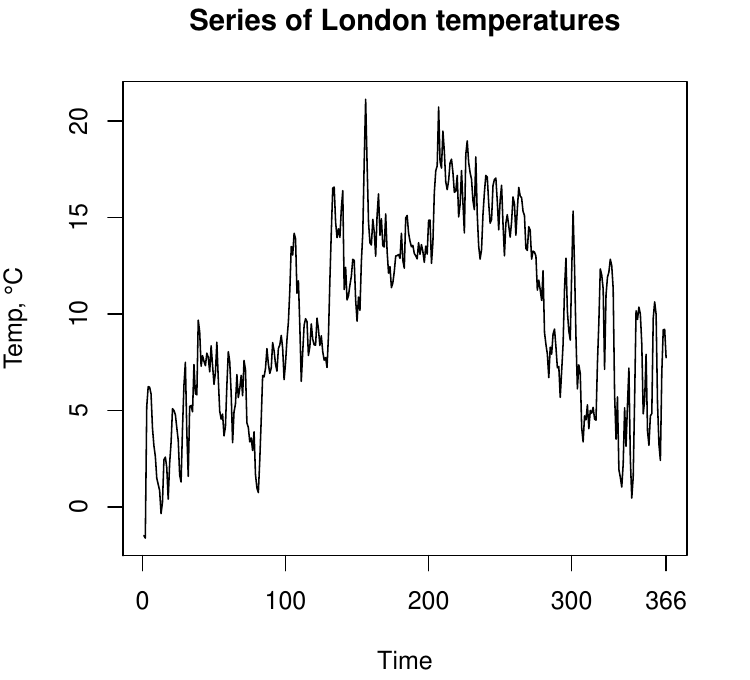}
        \vspace{-3mm}
        \caption{Daily temperature values in London \citep{Piggott_1980}.}\label{fig:tempLondon}
\end{figure}

It is clear that this series is non-stationary, leading us to compute first order differences $\Delta T_t=T_t-T_{t-1}$, $t=1,\ldots,n$ (taking $T_0=0$), the histogram of which is shown in Figure~\ref{fig:histdiff} (left). An ACF plot (Figure~\ref{fig:histdiff}; right) suggests that we use the $\text{MA}(4)$ model $\Delta T_t=c+\varepsilon_t+\theta_1\varepsilon_{t-1}+\cdots+\theta_4\varepsilon_{t-4}$ if a Gaussian distribution (with variance $\sigma^2$) is assumed for the innovations ($\varepsilon_t$). This is often the case for temperatures as discussed in the previous example.

\begin{figure}[ht]
        \centering
        \includegraphics[width=7cm]{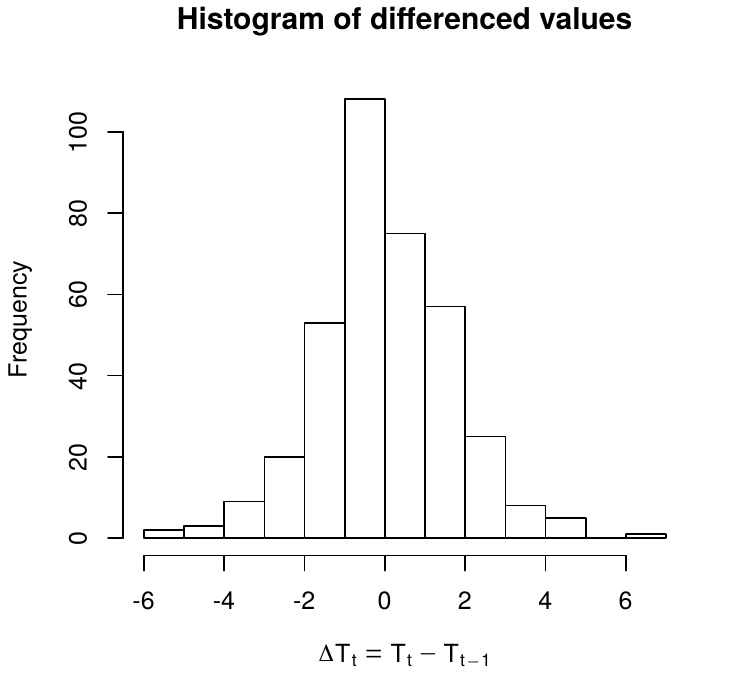}\includegraphics[width=7cm]{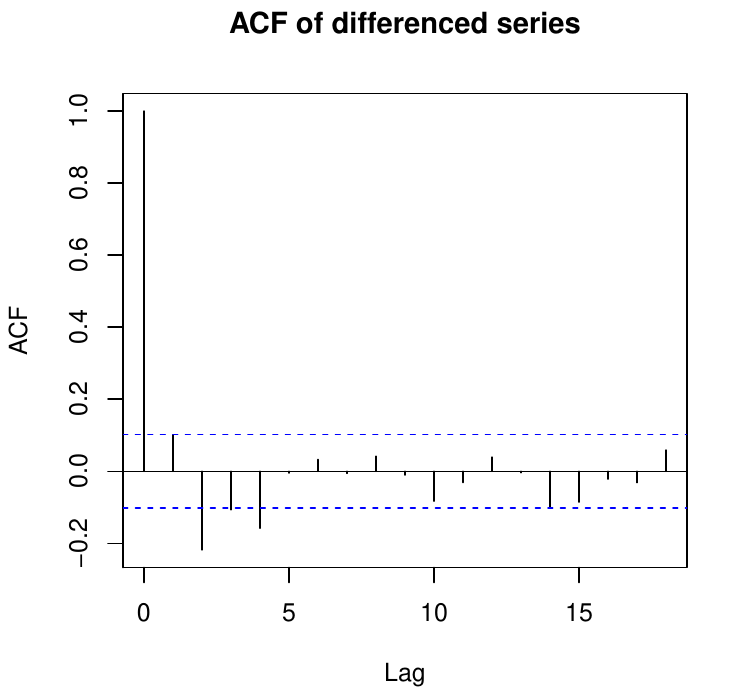}
        \vspace{-3mm}
        \caption{Histogram (left) and ACF plot (right) for the differenced series $\{\Delta T_t;t=1,\ldots,n\}$.}
        \label{fig:histdiff}
\end{figure}

    % \begin{figure}[H]
    %     \centering
    %     \includegraphics[width=7cm]{../Data/acf.pdf}
    %     \vspace{-3mm}
    %     \caption{ACF plot for the differenced series $\Delta T_t$.}
    %     \label{fig:acf}
    % \end{figure}

This is also confirmed by selecting the best model, in terms of AIC, among all $\text{MA}(q)$ models, $q = 1,\dots,10$, fitted on the centred $\Delta T_t$ values. (An AIC value of $1380$ was found using \texttt{Matlab}; the coefficients of the fitted Gaussian $\text{MA}(4)$ model, without intercept, being $\hat{\theta}_1=0.0708$, $\hat{\theta}_2=-0.2978$, $\hat{\theta}_3=-0.1506$ and $\hat{\theta}_4=-0.1955$, with an estimated variance of $\hat{\sigma}^2=2.4734$ and a log-likelihood of $-685.05$.) The exact same model was fitted by \cite{Shea_1987} and by \cite{Ducharme_2004}. % Matlab code: untitled.m

However, a look at the histogram of the residuals ($e_t$) of the fitted Gaussian MA(4) model, and at the associated QQ-plot
%(concordance coefficient: $\hat{\rho}_c=0.992$; see \cite{Lin1989})
in Figure~\ref{fig:histGauss}, does not fully support the Gaussian assumption of the innovations. It is especially clear in the QQ-plot that the tails are heavier than those of a Gaussian distribution, while the histogram reveals that the middle peak is not adequately captured. This is confirmed by our new test of normality $X^{\mathrm{APD}}_{2}$ applied on these residuals ($p=1.25 \cdot 10^{-5}$). Similar results are obtained if one uses the Jarque-Bera test ($p = 5.76 \cdot 10^{-7}$) or the \cite{Duchesne_et_al_2016} test ($p = 2.56 \cdot 10^{-4}$).

%    \begin{figure}[H]
%        \centering
%        \includegraphics[width=12cm]{../Data/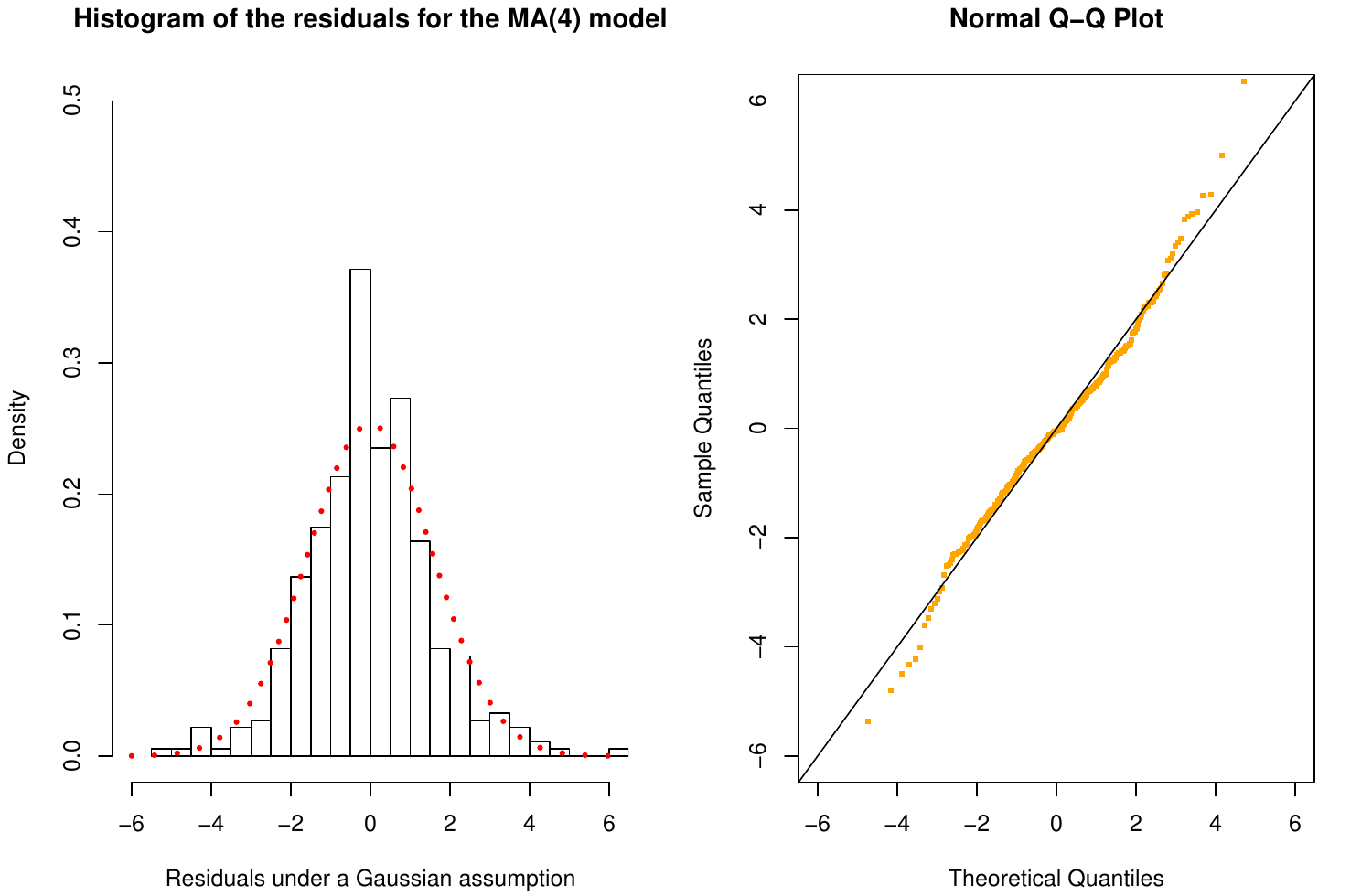}
%        \vspace{-2mm}
%        \caption{Histogram (left) and Normal QQ plot (right) for the residuals $e_t$ of the fitted Gaussian MA(4) model. A density curve of a $\mathcal{N}(\bar{x}_e,\hat{\sigma}^2_e)$ is superimposed to the histogram. The QQ-line passes through the 0.1- and 0.9-quantiles of a $\mathcal{N}(\bar{x}_e,\hat{\sigma}^2_e)$ distribution.}
%        \label{fig:histGauss}
%    \end{figure}

\begin{figure}[t]
        \centering
        \includegraphics[width=11cm]{histGauss.pdf}
        \vspace{-2mm}
        \caption{Histogram (left) and Normal QQ plot (right) for the residuals $e_t$ of the fitted Gaussian MA(4) model.
        A density curve of a $\mathcal{N}(\bar{x}_e,\hat{\sigma}^2_e)$ is superimposed to the histogram. The QQ-line passes through the origin with a slope of 1.}
        \label{fig:histGauss}
\end{figure}

According to \cite{Lomnicki_1961}, if the innovations $\epsilon_t$ are Gaussian, then the observations $\Delta T_t$ are also Gaussian, thanks to the central limit theorem. Therefore, we also applied our test of normality $X^{\mathrm{APD}}_{2}$ directly on the $\Delta T_t$ observations and obtained a $p$-value of $1.41 \cdot 10^{-4}$, which suggests that the observations, and hence the innovations, are not Gaussian. Consequently, it might not be such a good idea to fit an MA model with Gaussian innovations to these data. Now, even if non-Gaussian ARMA models are rather scarce in the literature (see \cite{Li_McLeod_1988,Lehr_Lii_1998,Ozaki_Iino_2001,Trindade_et_al_2010} for the few references we could find), we believe this topic deserves more attention as illustrated below.

The histogram in Figure~\ref{fig:histGauss} (left) seems more peaky than the corresponding Gaussian density, with heavier tails. The statistic for the directional test of fit against symmetric alternatives is %$Z((N\!K_2(\hat{\mu}_{\lambda},\hat{\sigma}_{\lambda}))^{1/4})
$Z(K_2^{\text{net}})=4.58$, which is statistically significant. There is a slight right asymmetry. The statistic for the directional test of fit against asymmetric alternatives is %$Z(B_2(\hat{\mu}_{\lambda},\hat{\sigma}_{\lambda}))
$Z(S_2)=0.85$.
% $K_2(\hat{\mu}_{\lambda},\hat{\sigma}_{\lambda}) = 0.480663$ and $B_2(\hat{\mu}_{\lambda},\hat{\sigma}_{\lambda}) = 0.02968349$
Even though a linear combination of Laplace distributed random variables is not necessarily of Laplace type, the shape of the histogram suggests to fit an MA model using an $\mathcal{A}\mathcal{L}(\kappa,\delta,\tau)$ distribution (asymmetric Laplace, AL) for the innovations. The best model (fitted using \texttt{Matlab} on the centred $\Delta T_t$ values and assuming an AL likelihood parametrized as in \cite{Trindade_et_al_2010}), in terms of both AIC and parsimony, is also an $\text{MA}(4)$ model. The estimated coefficients are $\hat{\theta}_1=0.1061$, $\hat{\theta}_2=-0.3645$, $\hat{\theta}_3=-0.1838$ and $\hat{\theta}_4=-0.1538$, with an $\text{AIC}$ of $1365$, a value much smaller than the AIC of 1380 we obtained when fitting an $\text{MA}(4)$ model with Gaussian innovations.
% Matlab code: AIC_ARMA_AL.m (MA(5) is slightly better than MA(4) but less parsimonious) and FIT_ARMA_AL.m (with q=4)
The estimated values of the location and scale parameters of the $\mathcal{A}\mathcal{L}(\kappa,\delta,\tau)$ distribution are respectively $\hat{\delta} = -0.1149$ and $\hat{\tau} = 1.6496$. The estimated skewness parameter value is $\hat{\kappa} = 0.9519$, quite close to 1, a value associated to perfect symmetry. A look at the histogram of the residuals (for this second model) and the associated $\text{QQ}$-plot (Figure~\ref{fig:histLap}) now favours an asymmetric Laplace assumption
%($\hat{\rho}_c=0.994$).
As we can see in Figure~\ref{fig:histLap}, a density curve of an $\mathcal{A}\mathcal{L}(\hat{\kappa},\hat{\delta},\hat{\tau})$ (solid blue line) has been superimposed to the histogram of the residuals and the fit is good. We also added a symmetric Laplace $\mathcal{A}\mathcal{L}(1,\hat{\delta},\hat{\tau})$ (dashed red line) and we observe that it is practically identical, given that $\hat{\kappa}=0.9519$ is close to 1, as mentioned above. Therefore, we applied our Laplacity test $X^{\mathrm{APD}}_{1}$ on the residuals of this $\text{MA}(4)$ model, and we do not reject the null hypothesis ($p\text{-value} = 0.126$).

\begin{figure}[t]
        \centering
        \includegraphics[width=11cm]{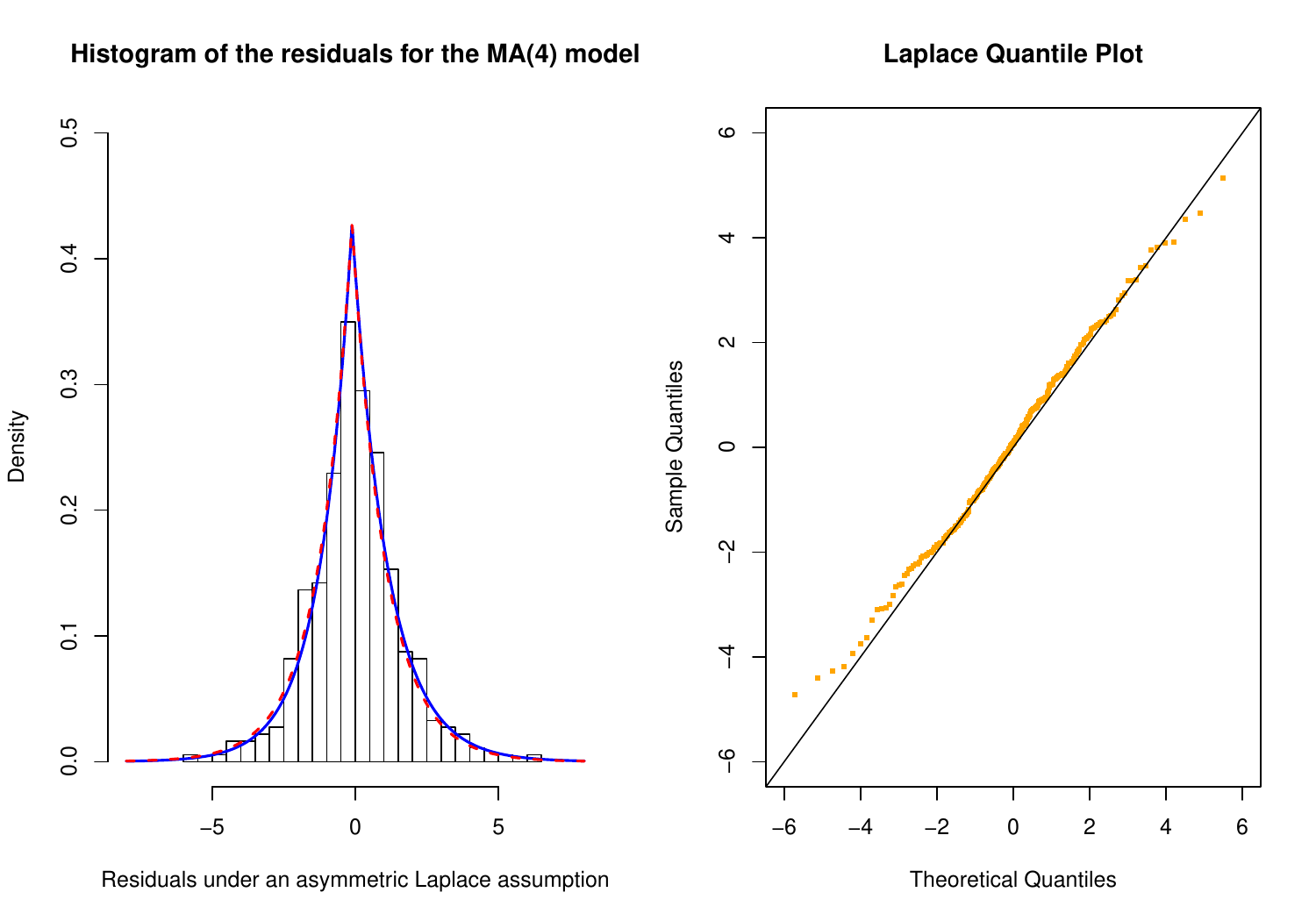}
        \vspace{-2mm}
        \caption{Histogram (left) and asymmetric Laplace QQ plot (right) for the residuals $e_t$ of the fitted MA(4) model. Density curves of an
        $\mathcal{A}\mathcal{L}(\hat{\kappa},\hat{\delta},\hat{\tau})$ (solid blue line) and a symmetric Laplace $\mathcal{A}\mathcal{L}(1,\hat{\delta},\hat{\tau})$
        (dashed red line) are superimposed to the histogram. The QQ-line passes through the origin with a slope of 1.}
        %The QQ-line passes through the 0.1- and 0.9-quantiles of a $\textrm{Laplace}\left(\widehat{\textrm{median}}(e),\hat{\tau}\right)$ distribution.}
        \label{fig:histLap}
\end{figure}

To summarize, based on these results, there is a substantial support in favour of the $\text{MA}(4)$ model with asymmetric Laplace errors (AIC=1365; $p=0.126$) compared to the $\text{MA}(4)$ model with Gaussian errors (AIC=1380; $p=1.25 \cdot 10^{-5}$); see \cite{Burnham2004}.
%    Table~\ref{Tab:Residuals} presents several numerical indicators, both for the $\text{MA}(4)$ models considered above as well as for $\text{MA}(2)$ models.
 %   The Gaussian noise approach has a slight advantage in terms of median absolute error and of mean squared error, with no gain in terms of mean absolute error.
  %  However, the AIC value is always greater than for its Laplace counterparts, with goodness-of-fit test $p$-values always leading to a rejection of the Gaussian hypothesis.
One could also consider the more parsimonious $\text{MA}(2)$ model with asymmetric Laplace innovations (AIC=1380; $p=0.054$). %rather than the $\text{MA}(4)$ model with Gaussian innovations. %, with no loss in terms of AIC, and with a $p$-value still above the standard threshold of $\alpha=5\%$.
The histogram of the residuals and the asymmetric Laplace QQ-plot for this $\text{MA}(2)$ model (not shown here) are very similar to the ones displayed in Figure~\ref{fig:histLap}. % Matlab code: FIT_ARMA_AL.m (with q=2)

    % \begin{table}[H]
    %   \centering
    %   \scriptsize
    %     \begin{tabular}{lccccl}
    %         \hline
    %         & Mean Absolute Error & Median Absolute Error & Mean Squared Error & AIC & $p$-value\\
    %         \hline
    %         Gaussian residuals MA(2) & 1.21 & 0.94 & 2.58 & 1391 & $3.07e^{-5}$\\
    %         Laplace residuals  MA(2) & 1.21 & 0.92 & 2.61 & 1380 & 0.054 \\
    %         Gaussian residuals MA(4) & 1.17 & 0.89 & 2.47 & 1380 & $1.25e^{-5}$\\
    %         Laplace residuals  MA(4) & 1.17 & 0.88 & 2.49 & 1364 & 0.126 \\
    %         \hline
    %     \end{tabular}
    %     \vspace{3mm}
    %     \caption{Mean Absolute Error: $n^{-1}\sum_{t=1}^n|e_t|$; Median Absolute Error: $\textrm{median}\left\{|e_1|,\ldots,|e_n|\right\}$; Mean Squared Error: $n^{-1}\sum_{t=1}^ne_t^2$}
    %     \label{Tab:Residuals}
    % \end{table}

\section{Empirical power comparison}\label{sec:empirical.power.analysis}

Empirical power comparison using Monte Carlo simulations is useful if done thoroughly, with a large number of tests and alternatives. Given the magnitude of this task, we focus our analysis on Laplace tests, or equivalently on $\mathrm{EPD}_{\lambda}$ tests with $\lambda=1$. Note that an empirical power comparison was performed in \cite{Desgagne_Lafaye_2018}, where 13 of the best goodness-of-fit tests for the normal distribution ($\lambda=2$) were compared against 85 alternatives for different sample sizes. One of these tests, denoted by $X_{\mathrm{APD}}$, is an omnibus test also based on second-power skewness and kurtosis and practically equivalent to our $X^{\mathrm{APD}}_{2}$ test. Overall, three normality tests stood out as the most powerful, with the $X_{\mathrm{APD}}$ test in second place just behind the \cite{doi:10.1080/00949659508811711} test based on normalized spacings and ahead of the famous \cite{MR205384} test.

In \cite{Laplace_tests_compilation}, we performed a comprehensive empirical power comparison of 40 goodness-of-fit tests for the univariate Laplace distribution -- including our new  $X^{\mathrm{APD}}_{1}$ and $Z(K^{\mathrm{net}}_{1})$ tests -- carried out using Monte Carlo simulations with sample sizes $n = 20, 50, 100, 200$, significance levels $\alpha = 0.01, 0.05, 0.10$, and 400 alternatives. The set of alternatives, formed of asymmetric and symmetric light/heavy-tailed distributions, consists of 20 specific cases of 20 submodels drawn from 11 main models. For each submodel, the 20 specific cases correspond to parameter values chosen to cover the entire power range.

We first identify in Table~\ref{table:overall.average.power.n20.to.200} the best omnibus tests against all 400 alternative distributions considered in the simulation study, whether symmetric/asymmetric light/heavy-tailed. The 10 most powerful tests (among the 40 candidates) are listed for each sample size and $\alpha=0.05$.  As an interpretational aid, we define the ``gap'' as the difference between the maximum average power amongst the 40 tests and the average power of a given test. We obtained four such gaps for each test, one for each sample size. The ``maximum gap'' and ``average gap'' for a given test are defined as the maximum and the average of those 4 gaps. To complete Table~\ref{table:overall.average.power.n20.to.200}, we have included the 10 best tests in terms of maximum and average gaps. We observe that the most powerful omnibus test, regardless of sample size, is our new $X^{\mathrm{APD}}_{1}$ test, with an average gap of 1.5\% and a maximum gap of 3\%. Note that the full name of the test abbreviations listed in Table~\ref{table:overall.average.power.n20.to.200} can be found in Table 2 in \cite{Laplace_tests_compilation}.

    \bgroup
    \def\arraystretch{1.0}% 1 is the default, change whatever you need
    \begin{table}[H]
    \scriptsize
        \caption{The average \% power of the 10 best performing tests (among 40) against all alternatives, as a function of sample size ($\alpha=0.05$).}\label{table:overall.average.power.n20.to.200}
        \vspace{-3mm}
        \begin{center}
            \setlength\tabcolsep{4.87pt} % default value: 6pt
            \begin{tabular}{ccccccccccc}
            \hline
            $n = 20$ &   $\text{AP}_y$ &   $\text{AP}_e$ &   $\text{AP}_z$ &  $\text{AP}_v$ &   $\text{CK}_v$ &  $\text{AP}_y^{\normalfont \text{(MLE)}}$ &  $\text{Me}_2^{(1)}$ &  $\text{AP}_a$ &  Wa & $X^{\mathrm{APD}}_{1}$\\
            {\bf Power} &  47.0 &  46.4 &  46.2 & 45.3 & 45.2 & 44.9 & 44.7 & 44.5 & 44.2 & 44.0 \\
            {\bf Gap} &  - &  0.6 &  0.8 & 1.7 & 1.8 & 2.1 & 2.3 & 2.5 & 2.8 & 3.0 \\
            \hline
            $n = 50$ &   $\text{AP}_v$ &  $\text{AP}_e$ &  $\text{AP}_y^{\scriptscriptstyle (\text{MLE})}$ & $X^{\mathrm{APD}}_{1}$ &  $\text{AP}_y$ &  $\text{Me}_{0.5}^{\scriptscriptstyle (2)}$ &  $\text{Me}_2^{\scriptscriptstyle (1)}$ &  LK &   Wa &  $\text{CK}_v$ \\
            {\bf Power} &  63.7 & 62.6 & 62.3 & 60.8 & 60.4 & 60.4 & 60.1  & 59.0 & 58.9 & 58.8 \\
            {\bf Gap} &  - & 1.1 & 1.4 & 2.8 & 3.2 & 3.3 & 3.5 & 4.7 & 4.7 & 4.8 \\
            \hline
            $n = 100$ &  $X^{\mathrm{APD}}_{1}$ &  LK &  Wa &  $\text{AP}_v$ &  $\text{Me}_{0.5}^{\scriptscriptstyle (2)}$ &  $\text{Me}_2^{\scriptscriptstyle (1)}$ &  $\text{AP}_y^{\normalfont \text{(MLE)}}$ &  $\text{AP}_e$ &  $\text{AB}_{\text{He}}$ &  Ku \\
            {\bf Power} &  74.3 & 72.3 & 71.8 & 71.7 & 71.7 & 71.4 & 69.0 & 68.6 & 68.1 & 67.8 \\
            {\bf Gap} &  - & 1.9 & 2.4 & 2.5 & 2.6 & 2.8 & 5.2 & 5.7 & 6.2 & 6.5 \\
            \hline
            $n = 200$ &  $X^{\mathrm{APD}}_{1}$ &  Wa &  LK &  $\text{Me}_2^{\scriptscriptstyle (1)}$ &  $\text{Me}_{0.5}^{\scriptscriptstyle (2)}$ &  Ku &     $\text{AB}_{\text{He}}$ &  BS &  $\text{AB}_{\text{Je}}$ &  $\text{AP}_v$ \\
            {\bf Power} &  81.8 &  80.7 & 80.3 & 79.5 & 79.1 & 78.0 & 77.3 & 77.1 & 76.8 & 76.8 \\
            {\bf Gap} &  - & 1.1 & 1.5 & 2.3 & 2.7 & 3.8 & 4.5 & 4.7 & 5.0 & 5.0 \\
            \hline
            \hline
             {\bf Max} &  $X^{\mathrm{APD}}_{1}$ &   $\text{Me}_{0.5}^{(2)}$ &  $\text{Me}_2^{(1)}$ &    Wa &  LK &  $\text{AP}_v$ &  $\text{AB}_{\text{He}}$ &  $\text{AP}_y^{\normalfont \text{(MLE)}}$ &  $\text{Z}_A$ & $\text{AP}_e$ \\
            {\bf Gap} & 3.0 &  3.3 &  3.5 &  4.7 &  5.0 &  5.0 &   7.3 & 7.7 & 7.8 & 7.9 \\
            \hline
            {\bf Average} &  $X^{\mathrm{APD}}_{1}$ &  $\text{AP}_v$ &  $\text{Me}_2^{(1)}$ &   Wa &   $\text{Me}_{0.5}^{(2)}$ &  LK &  $\text{AP}_e$ &  $\text{AP}_y^{\normalfont \text{(MLE)}}$ &  $\text{AP}_y$ & Ku \\
            {\bf Gap} &   1.5 &  2.3 &  2.8 &  2.8 &  3.0 &  3.3 & 3.8 & 4.1 & 5.7 & 5.9 \\
            \hline
            \end{tabular}
        \end{center}
    \end{table}
    \egroup

Similarly, we identify in Table~\ref{table:symmetric.average.power.n20.to.200} the best tests against the 240 symmetric alternative distributions (from the 12 symmetric submodels) considered in the simulation study, whether light or heavy-tailed. The 10 most powerful tests (among the 40 candidates) are listed by sample size and in terms of maximum and average gaps, for $\alpha=0.05$.  We observe that our new $Z(K^{\mathrm{net}}_{1})$ test stands out as the best for sample sizes of $n=50,100,200$, and also regardless of sample size, with an average gap of 1.0\% (1st) and a maximum gap of 4.1\% (2nd). Although directional tests designed specifically to detect symmetric alternatives are favoured here, our new omnibus $X^{\mathrm{APD}}_{1}$ test performs well with an average gap of 3.0\% (4th) and a maximum gap of 4.0\% (1st).

    \bgroup
    \def\arraystretch{1.0}% 1 is the default, change whatever you need
    \begin{table}[H]
    \scriptsize
        \caption{The average \% power of the 10 best performing tests (among 40) against the symmetric alternatives, as a function of sample size ($\alpha=0.05$).}\label{table:symmetric.average.power.n20.to.200}
        \vspace{-3mm}
        \begin{center}
            \setlength\tabcolsep{4.87pt} % default value: 6pt
            \begin{tabular}{ccccccccccc}
            \hline
            $n = 20$ &  Wa &  $\text{Me}_2^{(1)}$ &  $\text{AP}_z$ & $X^{\mathrm{APD}}_{1}$ &  $\text{Me}_{0.5}^{\scriptscriptstyle (2)}$ &  Ku &  $\text{AP}_y$ & $Z(K^{\mathrm{net}}_{1})$ &  $\text{Ho}_U$ &  $\text{AP}_e$\\
            {\bf Power} &  44.5 &  43.5 & 42.9 & 42.8 & 42.8 & 42.4 & 40.6 & 40.3 & 39.5 & 39.1 \\
            {\bf Gap} &  - &  1.0 & 1.6 & 1.6 & 1.7 & 2.0 & 3.9 & 4.1 & 5.0 & 5.4 \\
            \hline
            $n = 50$ &  $Z(K^{\mathrm{net}}_{1})$ &  $\text{Ho}_U$ & $X^{\mathrm{APD}}_{1}$ &  Wa &  GV &  $\text{Me}_{0.5}^{\scriptscriptstyle (2)}$ &  BS &  $ \text{Me}_2^{\scriptscriptstyle (1)}$ &  $\text{AP}_e$ &  $\text{AP}_v$ \\
            {\bf Power} &  61.9 & 60.1 & 58.2 & 58.2 & 58.1 & 57.5 & 57.4 & 57.3 & 56.6 & 56.5\\
            {\bf Gap} &  - & 1.8 & 3.7 & 3.7 & 3.8 & 4.3 & 4.5 & 4.5 & 5.3 & 5.4 \\
            \hline
            $n = 100$ &  $Z(K^{\mathrm{net}}_{1})$ &  $\text{Ho}_U$ &  GV &  BS & $X^{\mathrm{APD}}_{1}$ &  Wa &  $\text{Me}_{0.5}^{\scriptscriptstyle (2)}$ &  $\text{Me}_2^{\scriptscriptstyle (1)}$ &  LK &  Ku \\
            {\bf Power} &  73.4 & 71.4 & 70.5 & 70.5 & 69.3 & 68.7 & 67.3 & 67.1 & 65.9 & 65.6 \\
            {\bf Gap} &  - &  2.0 & 2.9 & 2.9 & 4.0 & 4.6 & 6.0 & 6.3 & 7.5 & 7.8 \\
            \hline
            $n = 200$ &  $Z(K^{\mathrm{net}}_{1})$ &   BS &  $\text{Ho}_U$ &  GV &  Wa & $X^{\mathrm{APD}}_{1}$ &  $\text{Me}_2^{\scriptscriptstyle (1)}$ &  $\text{Me}_{0.5}^{\scriptscriptstyle (2)}$ &  LK &  Ku \\
            {\bf Power} &  79.6 &  79.5 & 77.4 & 77.1 & 76.9 & 76.8 & 74.9 & 74.7 & 74.2 & 74.0 \\
            {\bf Gap} &  - &   0.1 & 2.2 & 2.5 & 2.7 & 2.8 & 4.7 & 4.9 & 5.4 & 5.6 \\
            \hline
            \hline
             {\bf Max} &  $X^{\mathrm{APD}}_{1}$ &  $Z(K^{\mathrm{net}}_{1})$ &    Wa &   $\text{Ho}_U$ &  GV &  $\text{Me}_{0.5}^{\scriptscriptstyle (2)}$ &  $\text{Me}_2^{\scriptscriptstyle (1)}$ & BS &  Ku  &  LK \\
            {\bf Gap} &  4.0 &  4.1 &  4.6 &  5.0 & 6.0 & 6.0 & 6.3 & 7.6 & 7.8 & 7.8 \\
            \hline
            {\bf Average} &  $Z(K^{\mathrm{net}}_{1})$ &   $\text{Ho}_U$ &   Wa & $X^{\mathrm{APD}}_{1}$   &  BS  &  GV &  $\text{Me}_2^{\scriptscriptstyle (1)}$ &  $\text{Me}_{0.5}^{\scriptscriptstyle (2)}$  &  Ku  &  LK \\
            {\bf Gap} &   1.0 &  2.7 &  2.8 & 3.0 & 3.8 & 3.8 & 4.1 & 4.2 & 5.5 & 6.7 \\
            \hline
            \end{tabular}
        \end{center}
    \end{table}
    \egroup

\vspace{-6mm}
\section{Conclusion}\label{sec:conclusion}

In this article, we introduced a family of goodness-of-fit tests for the $\mathrm{EPD}_{\lambda}(\mu,\sigma)$ distribution with $\lambda\geq 1$, including tests for the Laplace and Gaussian distributions. We obtained directional tests of fit against asymmetric ($Z(S_{\lambda})$) and symmetric ($Z(K^{\mathrm{net}}_{\lambda})$) alternatives, which we combined into an omnibus test ($X^{\mathrm{APD}}_{\lambda}\leqdef Z^2(S_{\lambda})+Z^2(K^{\mathrm{net}}_{\lambda})$). These tests are based on interesting new moment-type statistics called `$\lambda$-th-power skewness’, ‘$\lambda$-th-power kurtosis’ and ‘$\lambda$-th-power net kurtosis’. The new tests are very powerful and can be used as diagnostics to understand which aspects of the null hypothesis are rejected. Their null distribution is well approximated by the well-known chi-square or Gaussian distributions, for all sample sizes (up to very high precision for $n\geq 20$), which allow accurate and easy calculation of critical values and \textit{p}-values without the need to rely on simulated quantiles. We applied these tests on three sets of real temperature data and were able to demonstrate that a Laplace distribution or an $\mathrm{EPD}_{1.5}$ distribution is sometimes a better fit than a Gaussian distribution to model such measurements.

\section*{Funding}

F.\ Ouimet was supported by postdoctoral fellowships from the Natural Sciences and Engineering Research Council of Canada (PDF) and the Fond qu\'eb\'ecois de la recherche – Nature et technologies (B3X supplement and B3XR). F.\ Ouimet is currently supported by a CRM-Simons postdoctoral fellowship from the Centre de recherches math\'ematiques and the Simons foundation.

\section*{Acknowledgements}

We thank the anonymous referee for his/her comments.
This research includes computations performed using the computational cluster Katana supported by Research Technology Services at UNSW Sydney.
We thank the anonymous referee for his/her comments.

\section*{Disclosure statement}

No potential conflict of interest was reported by the authors.

%\section*{Software Availability}

%The new tests are implemented in the \texttt{R} package \texttt{PoweR}, available online at \url{https://cran.r-project.org/src/contrib/Archive/PoweR}.

%
% ----------  B I B L I O G R A P H I E  ----------
%

\section*{References}

% \bibliographystyle{authordate1} % (this does not compile on my computer, hence I had to use the one below)
%\bibliographystyle{jss} % (this is ugly in my opinion)
%\bibliography{Desgagne_Lafaye_Ouimet_2021_APD_bib}

\appendix

\begin{appendices}

\setcounter{section}{0}

\section{Supplementary material (\texttt{R} codes)}\label{sec:supplementary.materialcode}

All the detailed programming codes using the \texttt{R} software are provided online at the address \href{https://doi.org/10.1080/02331888.2022.2144859}{https://doi.org/doi:10.1080/02331888.2022.2144859} in two \texttt{R} files (A1 and A2). We also provide a CSV file (A3) of the dataset analysed in Section~\ref{refrig}.

\section{Supplementary material (proofs)}\label{sec:supplementary.materialA}

In this section, we gather the proofs of the various results we stated in Sections 2 and 3 of the paper entitled ``Goodness-of-Fit Tests for Laplace, Gaussian and Exponential Power Distributions Based on $\lambda$-th Power Skewness and Kurtosis''. Throughout, the convergence in law and in probability, under a given measure $\PP$, will be denoted by $\stackrel{\PP}{\rightsquigarrow}$ and $\stackrel{\PP}{\rightarrow}$, respectively. A random term $\varepsilon$ going to $0$ in $\PP$-probability as $n\to \infty$ will be denoted by $o_{\PP}(\varepsilon)$. A random term $\beta$ bounded in $\PP$-probability as $n\to \infty$ will be denoted by $O_{\PP}(\beta)$.

\subsection{Proof of Equation (3)}\label{proof:sim.APD}

    Let
    \begin{equation}
        X = \mu + \sigma \big(\delta_{\hspace{-0.2mm}\theta_1\hspace{-0.5mm},\theta_2}^{-1}\lambda W\big)^{1/\theta_2}
        \big((1 - \theta_1)(1-V) - \theta_1  V\big),
    \end{equation}
    where $W\sim \text{Gamma}\hspace{0.2mm}(1/\theta_2,1)$ and $V\sim\text{Bernoulli}\hspace{0.2mm}(\theta_1)$ are independent.
    In order to conclude, we need to show that the c.d.f.\ of $X$ is equal to the c.d.f.\ of the $\mathrm{APD}_{\lambda}(\theta_1,\theta_2,\mu,\sigma)$ distribution, denoted by $F_{\lambda}(x \nvert \theta_1,\theta_2,\mu,\sigma)$ in (2).
    Let $Y \leqdef \sigma^{-1} (X - \mu)$ and $y \leqdef \sigma^{-1} (x - \mu)$.
    Now, if $y < 0$, then
    \begin{align}\label{eq:2.3.negative}
        \PP(X \leq x) = \PP(Y \leq y)
        &= \PP(Y \leq y \nvert V = 1) \cdot \PP(V = 1) + \PP(Y \leq y \nvert V = 0) \cdot \PP(V = 0) \notag \\
        &= \PP\bigg(\theta_1 \Big(\frac{\lambda W}{\delta_{\hspace{-0.2mm}\theta_1\hspace{-0.5mm},\theta_2}}\Big)^{1/\theta_2} \geq -y\bigg) \cdot \theta_1 + 0 \cdot (1 - \theta_1) \notag \\
        &= \theta_1 \left[1 - F_W\bigg(\frac{\delta_{\hspace{-0.2mm}\theta_1\hspace{-0.5mm},\theta_2}}{\lambda} \cdot \Big(\frac{-y}{\theta_1}\Big)^{\theta_2}\bigg)\right],
    \end{align}
    and, if $y \geq 0$, then
    \begin{align}\label{eq:2.3.positive}
        \PP(X \leq x) = \PP(Y \leq y)
        &= \PP(Y \leq y \nvert V = 1) \cdot \PP(V = 1) + \PP(Y \leq y \nvert V = 0) \cdot \PP(V = 0) \notag \\
        &= 1 \cdot \theta_1 + \PP\bigg((1-\theta_1) \Big(\frac{\lambda W}{\delta_{\hspace{-0.2mm}\theta_1\hspace{-0.5mm},\theta_2}}\Big)^{1/\theta_2} \leq y\bigg) \cdot (1 - \theta_1) \notag \\
        &= \theta_1 + (1 - \theta_1) \, F_W\bigg(\frac{\delta_{\hspace{-0.2mm}\theta_1\hspace{-0.5mm},\theta_2}}{\lambda} \cdot \Big(\frac{y}{1 - \theta_1}\Big)^{\theta_2}\bigg).
    \end{align}
    The right-hand sides of \eqref{eq:2.3.negative} and \eqref{eq:2.3.positive} are both equal to $F_{\lambda}(x \nvert \theta_1,\theta_2,\mu,\sigma)$ in (2).

\subsection{Proof of Remark 2.5}\label{proof:strong.consistency}

    In Lemma~\ref{lem:standard.ULLN} below, we state a small adaptation of a well-known uniform law of large numbers due to Lucien Le Cam.
    We will use it several times for different proofs in this supplementary material, including the proof of the next lemma (Lemma~\ref{lem:convergence.almost.sure.MLE.estimators}) regarding the strong consistency of the maximum likelihood estimators $\hat{\mu}_{\lambda}$ and $\hat{\sigma}_{\lambda}$, which we stated in Remark~2.5.
    The proof of Lemma~\ref{lem:standard.ULLN} follows the strategy described in Section 16 of \cite{MR1699953}.
    A small adaptation is needed to treat the case where the parameter space is not compact.

    \begin{lemma}\label{lem:standard.ULLN}
        Let $X_1,X_2,X_3,\dots$ be a sequence of i.i.d.\ random variables, and let $\hat{\bb{\xi}}_n \leqdef \hat{\bb{\xi}}_n(X_1,X_2,\dots,X_n)$ be an estimator such that $\hat{\bb{\xi}}_n \xrightarrow{\mathrm{a.s.}} \bb{\xi}\in \R^d$. For $\delta \geq 0$, let $B_{\delta}[\bb{\xi}] \leqdef \{\bb{t}\in \R^d : \|\bb{t} - \bb{\xi}\|_2 \leq \delta\}$. Assume that $U : \R \times \R^d \to \R$ is a measurable function and there exists $\delta > 0$ such that
        \begin{description}
            \item[(C.1)] For all $x\in \R$, $\bb{t}\mapsto U(x,\bb{t})$ is continuous on $B_{\delta}[\bb{\xi}]$;
            \item[(C.2)] There exists $K : \R \to \R$ such that $|U(x,\bb{t})| \leq K(x)$ for all $(x,\bb{t})\in \R \times B_{\delta}[\bb{\xi}]$ and $\EE\big[|K(X_1)|\big] < \infty$.
        \end{description}
        If $\rho_n \leqdef \|\hat{\bb{\xi}}_n - \bb{\xi}\|_2$ and $\overline{U}(\bb{t}) \leqdef \EE[U(X_1,\bb{t})]$, then
        \begin{equation}\label{eq:standard.ULLN}
            \PP\bigg(\limsup_{n\to\infty} \sup_{\bb{t} \in B_{\rho_n}[\bb{\xi}]} \Big|\frac{1}{n} \sum_{i=1}^n U(X_i,\bb{t}) - \overline{U}(\bb{\xi})\Big| > 0\bigg) = 0.
        \end{equation}
    \end{lemma}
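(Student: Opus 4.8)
The plan is to reduce the statement to the classical Le Cam uniform law of large numbers on the \emph{compact} ball $B_{\delta}[\bb{\xi}]$ and then exploit $\rho_n\to 0$ almost surely to finish. First I would record that $\overline{U}$ is finite and continuous on $B_{\delta}[\bb{\xi}]$: finiteness is immediate from (A.2), and continuity follows from (A.1) and (A.2) by dominated convergence. Then, for every $n$ with $\rho_n\leq\delta$ and every $\bb{t}\in B_{\rho_n}[\bb{\xi}]$, the triangle inequality gives
\begin{equation*}
    \Big|\frac{1}{n}\sum_{i=1}^n U(X_i,\bb{t}) - \overline{U}(\bb{\xi})\Big|
    \leq \sup_{\bb{s}\in B_{\delta}[\bb{\xi}]}\Big|\frac{1}{n}\sum_{i=1}^n U(X_i,\bb{s}) - \overline{U}(\bb{s})\Big|
    + \sup_{\|\bb{s}-\bb{\xi}\|_2\leq\rho_n}\big|\overline{U}(\bb{s}) - \overline{U}(\bb{\xi})\big|.
\end{equation*}
On the almost-sure event where $\hat{\bb{\xi}}_n\to\bb{\xi}$ we have $\rho_n\leq\delta$ for all large $n$, and the second term vanishes as $n\to\infty$ by continuity of $\overline{U}$ at $\bb{\xi}$; so it remains to prove that $\sup_{\bb{s}\in B_{\delta}[\bb{\xi}]}\big|\tfrac1n\sum_{i=1}^n U(X_i,\bb{s}) - \overline{U}(\bb{s})\big|\to 0$ almost surely, after which \eqref{eq:standard.ULLN} follows by intersecting the two almost-sure events.

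For this compact-domain statement I would run the standard bracketing argument of Section 16 of \cite{MR1699953}. Fix $\varepsilon>0$. For $\bb{t}_0\in B_{\delta}[\bb{\xi}]$ and $\eta>0$, put
\begin{equation*}
    m_{\eta}(x,\bb{t}_0) \circeq \sup_{\bb{t}\in B_{\eta}[\bb{t}_0]\cap B_{\delta}[\bb{\xi}]} U(x,\bb{t}),
    \qquad
    \ell_{\eta}(x,\bb{t}_0) \circeq \inf_{\bb{t}\in B_{\eta}[\bb{t}_0]\cap B_{\delta}[\bb{\xi}]} U(x,\bb{t}).
\end{equation*}
By (A.1) the supremum and infimum may be restricted to a countable dense subset of $B_{\eta}[\bb{t}_0]\cap B_{\delta}[\bb{\xi}]$, so $m_{\eta}(\cdot,\bb{t}_0)$ and $\ell_{\eta}(\cdot,\bb{t}_0)$ are measurable, and they are dominated by $K$ by (A.2). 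Again by (A.1), $m_{\eta}(x,\bb{t}_0)\downarrow U(x,\bb{t}_0)$ and $\ell_{\eta}(x,\bb{t}_0)\uparrow U(x,\bb{t}_0)$ as $\eta\downarrow 0$, so dominated convergence yields $\eta(\bb{t}_0)>0$ with $\EE[m_{\eta(\bb{t}_0)}(X_1,\bb{t}_0)]<\overline{U}(\bb{t}_0)+\varepsilon$ and $\EE[\ell_{\eta(\bb{t}_0)}(X_1,\bb{t}_0)]>\overline{U}(\bb{t}_0)-\varepsilon$; shrinking $\eta(\bb{t}_0)$ if necessary, I also arrange $|\overline{U}(\bb{t})-\overline{U}(\bb{t}_0)|<\varepsilon$ for $\bb{t}\in B_{\eta(\bb{t}_0)}[\bb{t}_0]\cap B_{\delta}[\bb{\xi}]$, which is possible since $\overline{U}$ is continuous on $B_{\delta}[\bb{\xi}]$. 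The balls $\{B_{\eta(\bb{t}_0)}[\bb{t}_0]:\bb{t}_0\in B_{\delta}[\bb{\xi}]\}$ cover the compact set $B_{\delta}[\bb{\xi}]$, so finitely many of them, with centres $\bb{t}_1,\dots,\bb{t}_N$, already do. For $\bb{t}\in B_{\eta(\bb{t}_j)}[\bb{t}_j]\cap B_{\delta}[\bb{\xi}]$ one has $\ell_{\eta(\bb{t}_j)}(X_i,\bb{t}_j)\leq U(X_i,\bb{t})\leq m_{\eta(\bb{t}_j)}(X_i,\bb{t}_j)$, so applying the strong law of large numbers to the $2N$ integrable functions $m_{\eta(\bb{t}_j)}(\cdot,\bb{t}_j)$ and $\ell_{\eta(\bb{t}_j)}(\cdot,\bb{t}_j)$ (a finite intersection of probability-one events) and taking $\limsup_n$ inside the finite maximum over $j$,
\begin{equation*}
    \limsup_{n\to\infty}\,\sup_{\bb{t}\in B_{\delta}[\bb{\xi}]}\Big|\frac{1}{n}\sum_{i=1}^n U(X_i,\bb{t}) - \overline{U}(\bb{t})\Big| \leq 2\varepsilon \qquad \text{almost surely}.
\end{equation*}
Intersecting over $\varepsilon=1/k$, $k\in\N$, gives the desired almost-sure convergence.

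The parameter set being $\R^d$ rather than a compact set is precisely the ``small adaptation'' mentioned before the statement: one never needs a uniform law on all of $\R^d$, only on the fixed compact ball $B_{\delta}[\bb{\xi}]$, and the hypothesis $\hat{\bb{\xi}}_n\stackrel{\text{a.s.}}{\longrightarrow}\bb{\xi}$ makes the random ball $B_{\rho_n}[\bb{\xi}]$ eventually lie inside $B_{\delta}[\bb{\xi}]$. Apart from this, the only points needing care are the measurability of the bracketing envelopes $m_{\eta},\ell_{\eta}$ (handled via separability using the continuity assumption (A.1)) and the bookkeeping around the intersections $B_{\eta}[\bb{t}_0]\cap B_{\delta}[\bb{\xi}]$, so that the envelopes stay dominated by $K$ and the finite cover is of the right set; I expect this last bit to be the main thing to get right, but it requires no new idea beyond the classical compact-domain argument.
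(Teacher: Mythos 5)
Your proof is correct and follows essentially the same route as the paper's: the identical triangle-inequality decomposition into a compact-ball uniform LLN term and a bias term controlled by continuity of $\overline{U}$ (via dominated convergence) together with $\rho_n \stackrel{\text{a.s.}}{\longrightarrow} 0$. The only difference is that you spell out the bracketing/compactness proof of the uniform law on $B_{\delta}[\bb{\xi}]$, whereas the paper simply invokes Theorem 16~(a) of \cite{MR1699953}, whose proof is exactly the argument you reproduce.
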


    \begin{proof}[\bf Proof of Lemma~\ref{lem:standard.ULLN}]
        Fix $\delta > 0$ to a value for which $({\rm C.1})$ and $({\rm C.2})$ hold.
        By the triangle inequality, and since $\rho_n \xrightarrow{\mathrm{a.s.}} 0$ by assumption, we have
        \begin{align}\label{eq:lem:standard.ULLN.convergence.pivot}
            &\PP\bigg(\limsup_{n\to\infty} \sup_{\bb{t} \in B_{\rho_n}[\bb{\xi}]} \Big|\frac{1}{n} \sum_{i=1}^n U(X_i,\bb{t}) - \overline{U}(\bb{\xi})\Big| > 0\bigg) \notag \\[-0.5mm]
            &\leq \PP\bigg(\limsup_{n\to\infty} \sup_{\bb{t} \in B_{\rho_n}[\bb{\xi}]} \Big|\frac{1}{n} \sum_{i=1}^n U(X_i,\bb{t}) - \overline{U}(\bb{t})\Big| > 0\bigg) + \PP\bigg(\limsup_{n\to\infty} \sup_{\bb{t} \in B_{\rho_n}[\bb{\xi}]} \big|\overline{U}(\bb{t}) - \overline{U}(\bb{\xi})\big| > 0\bigg) \notag \\[-0.5mm]
            &\leq \PP\bigg(\limsup_{n\to\infty} \sup_{\bb{t} \in B_{\delta}[\bb{\xi}]} \Big|\frac{1}{n} \sum_{i=1}^n U(X_i,\bb{t}) - \overline{U}(\bb{t})\Big| > 0\bigg) + \PP\bigg(\limsup_{n\to\infty} \sup_{\bb{t} \in B_{\rho_n}[\bb{\xi}]} \big|\overline{U}(\bb{t}) - \overline{U}(\bb{\xi})\big| > 0\bigg).
        \end{align}

        %\newpage
        \noindent
        By applying a uniform law of large numbers on the compact set $B_{\delta}[\bb{\xi}]$ (Theorem~16 (a) in \cite{MR1699953} with our assumptions $({\rm C.1})$ and $({\rm C.2})$), the first probability on the right-hand side of \eqref{eq:lem:standard.ULLN.convergence.pivot} is zero.
        By $({\rm C.1})$, $({\rm C.2})$ and the dominated convergence theorem, we know that $\overline{U}(\bb{t}) \leqdef \EE[U(X_1,\bb{t})]$ is continuous on $B_{\delta}[\bb{\xi}]$.
        Since $\rho_n\xrightarrow{\mathrm{a.s.}} 0$ by hypothesis, the second probability on the right-hand side of \eqref{eq:lem:standard.ULLN.convergence.pivot} is also zero.
    \end{proof}

    We can now prove the strong consistency of the maximum likelihood estimators.

    \begin{lemma}\label{lem:convergence.almost.sure.MLE.estimators}
        Let $\hat{\mu}_{\lambda}$ and $\hat{\sigma}_{\lambda}$ be defined as in Proposition~2.3.
        Assume that the observations $X_1,X_2,\dots,X_n$ are i.i.d.\ and $\mathrm{APD}_{\lambda}(\theta_1,\theta_2,\mu,\sigma)$ distributed.
        Then
        \begin{equation}\label{eq:lem:convergence.almost.sure.MLE.estimators.eq}
            \begin{pmatrix}
                \hat{\mu}_{\lambda} \\[1mm]
                \hat{\sigma}_{\lambda}
            \end{pmatrix}
            \xrightarrow{\mathrm{a.s.}}
            \begin{pmatrix}
                \mu \\[1mm]
                \sigma
            \end{pmatrix},
            \quad \text{as } n\to\infty.
        \end{equation}
        In other words, the above convergence holds under both $H_0$ and $H_1$.
    \end{lemma}

    \begin{proof}[\bf Proof of Lemma~\ref{lem:convergence.almost.sure.MLE.estimators}]
        By definition, for all $\lambda\geq 1$, the estimator $\hat{\mu}_{\lambda}$ is determined by the equation
        \begin{equation}\label{eq:MLE.system.mu.critical.points}
            \sum_{i=1}^n w(X_i,\hat{\mu}_{\lambda}) = 0, \quad \text{where } w(x,\mu) \leqdef |x - \mu|^{\lambda - 1} \mathrm{sign}(x - \mu).
        \end{equation}
        For any $x\in \R$, $w(x,\cdot)$ is non-increasing when $\lambda \geq 1$. From Theorem~2 and Remark 1 in \cite{MR709248} (the proof is a simple application of Chernoff's theorem), we get that, for any $\e > 0$, the probabilities $\PP(|\hat{\mu}_{\lambda} - \mu| > \e)$ decay exponentially fast in $n$ (using the fact that $\EE[w(X_1,\mu + \e)] < 0$ and $\EE[w(X_1,\mu - \e)] > 0$ both hold).
        In particular, for any $\e > 0$, the probabilities are summable in $n$.
        Hence, by the Borel-Cantelli lemma, we have  $\hat{\mu}_{\lambda} \rightarrow \mu$ a.s.

        Also, from Proposition~2.3, we have
        \begin{equation}
           \hat{\sigma}_{\lambda}^{\lambda} = \frac{1}{n} \sum_{i=1}^n |X_i - \hat{\mu}_{\lambda}|^{\lambda}.
        \end{equation}
        If we denote $U(x,t) \leqdef |x - t|^{\lambda}$ and $\overline{U}(t) \leqdef \EE\big[U(X_1,t)\big]$, then it is straightforward to verify that $\overline{U}(\mu) =  \sigma^{\lambda}$.
        From Lemma~\ref{lem:standard.ULLN}, we deduce
        \begin{equation}\label{eq:lem:convergence.probability.MLE.estimators.to.show}
            \PP\bigg(\lim_{n\to\infty} \Big|\frac{1}{n} \sum_{i=1}^n U(X_i,\hat{\mu}_{\lambda}) - \overline{U}(\mu)\Big| = 0\bigg) = 1.
        \end{equation}
        This implies $\hat{\sigma}_{\lambda} \rightarrow \sigma$ a.s.
    \end{proof}

    \begin{remark}
        If $n$ is odd or if $n$ is even with $X_{(n/2)}=X_{(n/2+1)}$, then we have the right-continuity of $\hat{\mu}_{\lambda}$ around $\lambda=1$, namely $\lim_{\lambda\searrow 1}\hat{\mu}_{\lambda}=\mathrm{median}(X_1,X_2,\dots,X_n)$. Otherwise, if $n$ is even, we have more generally $\lim_{\lambda\searrow 1}\hat{\mu}_{\lambda}\in (X_{(n/2)},X_{(n/2+1)})$, and thus  $\lim_{n\to \infty}(\lim_{\lambda\searrow 1}\hat{\mu}_{\lambda}-\mathrm{median}(X_1,X_2,\dots,X_n))=0$.
    \end{remark}

\subsection{Proof of Theorem~3.3}\label{proof:thm.H0}

    For short, write
    \begin{equation}
        \bb{\kappa}\leqdef
            \begin{pmatrix}
                \mu \\[1mm]
                \sigma
            \end{pmatrix},
        \qquad
        \hat{\bb{\kappa}}_n \leqdef
            \begin{pmatrix}
                \hat{\mu}_{\lambda} \\[1mm]
                \hat{\sigma}_{\lambda}
            \end{pmatrix},
        \qquad
            \bb{\theta}\leqdef
            \begin{pmatrix}
                \theta_1 \\[1mm]
                \theta_2
            \end{pmatrix}
        \quad \text{and} \quad
            \bb{\theta}_0\leqdef
            \begin{pmatrix}
                1/2 \\[1mm]
                \lambda
            \end{pmatrix}.
    \end{equation}
    If $y\leqdef \sigma^{-1}(x-\mu)$, we define
    \begin{align}
        \bb{d}_{\bb{\theta}}(y)
        &\leqdef \frac{\partial}{\partial \bb{\theta}} \log f_{\lambda}(x \nvert \bb{\theta},\bb{\kappa})\big|_{\bb{\theta}=\bb{\theta}_0} = \frac{\partial}{\partial \bb{\theta}} \log f_{\lambda}(y \nvert \bb{\theta},(0,1)^{\top})\big|_{\bb{\theta}=\bb{\theta}_0}, \label{eq:d.theta} \\[2mm]
        \bb{d}_{\bb{\kappa}}(y)
        &\leqdef \sigma \frac{\partial}{\partial \bb{\kappa}} \log f_{\lambda}(x \nvert \bb{\theta}_0,\bb{\kappa}) =
            \begin{pmatrix}
                -\frac{\partial}{\partial y} \log f_{\lambda}(y \nvert \bb{\theta}_0,(0,1)^{\top}) \\[2mm]
                -1 - y\frac{\partial}{\partial y} \log f_{\lambda}(y \nvert \bb{\theta}_0,(0,1)^{\top})
            \end{pmatrix}. \label{eq:d.kappa}
    \end{align}
    We can easily verify (using for example \texttt{Wolfram Mathematica}) that
    \begin{equation}\label{eq:vector.d}
        \begin{aligned}
            &\bb{d}_{\bb{\theta}}(y) =
            \begin{pmatrix}
                -2 |y|^{\lambda} \mathrm{sign}(y) \\[2mm]
                -\frac{1}{\lambda} \Big[|y|^{\lambda} \log|y| - \frac{1}{\lambda} \big(\lambda+\log \lambda + \psi(1/\lambda)\big)\Big]
            \end{pmatrix}, \qquad \bb{d}_{\bb{\kappa}}(y) =
            \begin{pmatrix}
                 |y|^{\lambda - 1} \mathrm{sign}(y) \\[2mm]
                 |y|^{\lambda} - 1
            \end{pmatrix},
        \end{aligned}
    \end{equation}
    where recall that $\psi(z) \leqdef \frac{\rd}{\rd z} \log \Gamma(z)$ denotes the digamma function, and $\Gamma(z) \leqdef \int_0^{\infty} t^{z-1} e^{-t} \rd t$ for $z > 0$.

    %\newpage
    Using the notation in \eqref{eq:d.theta}, we can write the Rao's score statistic (see (6))
    \begin{equation}\label{eq:score.statistic.known.2}
        \bb{r}_n(\bb{\kappa}) \leqdef \frac{1}{n} \sum_{i=1}^n \frac{\partial}{\partial \bb{\theta}} \log f_{\lambda}(X_i \nvert \bb{\theta},\bb{\kappa})\big|_{\bb{\theta}=\bb{\theta}_0} \quad \text{ as } \quad \bb{r}_n(\bb{\kappa}) = \frac{1}{n} \sum_{i=1}^n \bb{d}_{\bb{\theta}}\bigg(\frac{X_i - \mu}{\sigma}\bigg),
    \end{equation}
    and the {\it modified score statistic}
    \begin{equation}\label{eq:score.statistic.unknown.2}
        \bb{r}_n(\hat{\bb{\kappa}}_n) = \frac{1}{n} \sum_{i=1}^n \frac{\partial}{\partial \bb{\theta}} \log f_{\lambda}(X_i \nvert \bb{\theta},\hat{\bb{\kappa}}_n)\big|_{\bb{\theta}=\bb{\theta}_0} \quad \text{ as } \quad \bb{r}_n(\hat{\bb{\kappa}}_n) = \frac{1}{n} \sum_{i=1}^n \bb{d}_{\bb{\theta}}\bigg(\frac{X_i - \hat{\mu}_{\lambda}}{\hat{\sigma}_{\lambda}}\bigg).
    \end{equation}
    Note that $\bb{r}_n(\hat{\bb{\kappa}}_n)$ is location and scale invariant, or said otherwise, $\bb{\kappa}$-invariant.

    The first step of the proof of Theorem~3.3 consists in determining the asymptotic law of the vector
    \begin{equation}\label{eq:asymp.normality.sum.H0}
        \frac{1}{\sqrt{n}} \sum_{i=1}^n
            \begin{pmatrix}
                \bb{d}_{\bb{\theta}}(Y_i) \\[1mm]
                \bb{d}_{\bb{\kappa}}(Y_i)
            \end{pmatrix}, \quad \text{where } ~Y_i \leqdef \sigma^{-1} (X_i - \mu),
    \end{equation}
    under $H_0$ (see Proposition~\ref{prop:asymptotic.normality.vector.d} below). The proof is a direct application of the central limit theorem.
    The second step consists in writing $\bb{r}_n(\hat{\bb{\kappa}}_n)$ as a linear combination of the components of this vector plus a negligible term via a first-order Taylor expansion (see Proposition~\ref{prop:MLE.score.asymptotics.Taylor}).
    The estimation of the derivative part of the expansion is dealt with in Proposition~\ref{prop:MLE.score.asymptotics.explicit}.
    Using these three propositions (which will be proved in Section~\ref{sec:main.result.1.proofs}), we will then be able to deduce the asymptotic distribution of $n^{1/2} \bb{r}_n(\hat{\bb{\kappa}}_n)$ under $H_0$.

    \begin{proposition}\label{prop:asymptotic.normality.vector.d}
        We have, as $n\to \infty$,
        \begin{equation}\label{eq:asymptotic.normality.vector.d}
            \frac{1}{\sqrt{n}} \sum_{i=1}^n
            \begin{pmatrix}
                \bb{d}_{\bb{\theta}}(Y_i) \\[1mm]
                \bb{d}_{\bb{\kappa}}(Y_i)
            \end{pmatrix}
            \stackrel{\PP_{H_0}}{\scalebox{2}[1.2]{$\rightsquigarrow$}}
            \mathcal{N}_4\left(\bb{0}, J \leqdef
            \begin{pmatrix}
                J_{\bb{\theta}\bb{\theta}} &J_{\bb{\theta}\bb{\kappa}} \\[1mm]
                J_{\bb{\theta}\bb{\kappa}}^{\top} &J_{\bb{\kappa}\bb{\kappa}}
            \end{pmatrix}
            \right),
        \end{equation}
        where $\bb{d}_{\bb{\theta}}$ and $\bb{d}_{\bb{\kappa}}$ are given in \eqref{eq:vector.d} and the covariance matrix $J$ is composed of $J_{\bb{\theta}\bb{\theta}}=\EE[\bb{d}_{\bb{\theta}}(Y) \bb{d}_{\bb{\theta}}(Y)^{\top}]$, $J_{\bb{\kappa}\bb{\kappa}}=\EE[\bb{d}_{\bb{\kappa}}(Y)\bb{d}_{\bb{\kappa}}(Y)^{\top}]$ and
        $J_{\bb{\theta}\bb{\kappa}}=\EE[\bb{d}_{\bb{\theta}}(Y)\bb{d}_{\bb{\kappa}}(Y)^{\top}]$, with
        \begin{equation}\label{eq:asymptotic.normality.vector.d.matrix}
            \begin{aligned}
            &J_{\bb{\theta}\bb{\theta}}=
                \begin{pmatrix}
                    4(1 + \lambda) &0  \\[1mm]
                    0 &\frac{(1+1/\lambda)\psi_1(1+1/\lambda)+\phi^2-1}{\lambda^3}
                \end{pmatrix}, \quad
            %%%%%%%%%%%%
            J_{\bb{\kappa}\bb{\kappa}}=
                \begin{pmatrix}
                    \frac{\lambda^{2-2/\lambda}\Gamma(2-1/\lambda)}{\Gamma(1/\lambda)} &0  \\[1mm]
                    0 &\lambda
                \end{pmatrix}
                , \quad
            %%%%%%%%
            J_{\bb{\theta}\bb{\kappa}}=
                \begin{pmatrix}
                   -\frac{2\lambda^{2-1/\lambda}}{\Gamma(1/\lambda)} &0  \\[1mm]
                    0 &-\frac{\phi}{\lambda}
                \end{pmatrix},
            \end{aligned}
        \end{equation}
        where $\phi\leqdef 1+\lambda+\log \lambda + \psi(1/\lambda)$ and $\psi_1(z)\leqdef \frac{\rd}{\rd z}\psi(z)$ is the trigamma function.
    \end{proposition}

    \begin{proposition}\label{prop:MLE.score.asymptotics.Taylor}
        We have, as $n\to \infty$,
        \begin{equation}\label{eq:prop:MLE.score.asymptotics.Taylor}
            n^{1/2} \bb{r}_n(\hat{\bb{\kappa}}_n) = n^{1/2} \bb{r}_n(\bb{\kappa}) + \bb{r}_n'(\bb{\kappa}) \, n^{1/2} (\hat{\bb{\kappa}}_n - \bb{\kappa}) + o_{\hspace{0.3mm}\PP_{H_0}}(1) \bb{1}_2,
        \end{equation}
        where $\bb{1}_2 \leqdef (1,1)^{\top}$ and $\bb{r}_n'(\bb{\kappa}) \leqdef \big(\frac{\partial}{\partial\mu}\bb{r}_n(\bb{\kappa}), \frac{\partial}{\partial\sigma}\bb{r}_n(\bb{\kappa})\big)$.
    \end{proposition}

    Now, we study the term $\bb{r}_n'(\bb{\kappa})\, n^{1/2} (\hat{\bb{\kappa}}_n - \bb{\kappa})$ and rewrite \eqref{eq:prop:MLE.score.asymptotics.Taylor}.

    \begin{proposition}\label{prop:MLE.score.asymptotics.explicit}
        Recall $J_{\bb{\theta}\bb{\kappa}}$ and $J_{\bb{\kappa}\bb{\kappa}}$ from Proposition~\ref{prop:asymptotic.normality.vector.d}. Then, as $n\to \infty$,
        \begin{align}
            \bb{r}_n'(\bb{\kappa})
            &= -\sigma^{-1} J_{\bb{\theta}\bb{\kappa}} + o_{\hspace{0.3mm}\PP_{H_0}}(1) \bb{1}_2^{\phantom{\top}}\hspace{-1mm}\bb{1}_2^{\top}, \label{eq:prop:MLE.score.asymptotics.explicit.r.prime} \\
            n^{1/2} (\hat{\bb{\kappa}}_n - \bb{\kappa})
            &= \sigma J_{\bb{\kappa}\bb{\kappa}}^{-1} \frac{1}{\sqrt{n}} \sum_{i=1}^n \bb{d}_{\bb{\kappa}}(Y_i) + o_{\hspace{0.3mm}\PP_{H_0}}(1) \bb{1}_2. \label{eq:prop:MLE.score.asymptotics.explicit.MLE}
        \end{align}
        Furthermore,
        \begin{equation}\label{eq:prop:MLE.score.asymptotics.explicit.score}
            n^{1/2} \bb{r}_n(\hat{\bb{\kappa}}_n)
            =
            \left(I_2 \, ; \, - J_{\bb{\theta}\bb{\kappa}} J_{\bb{\kappa}\bb{\kappa}}^{-1}\right) \frac{1}{\sqrt{n}} \sum_{i=1}^n
            \begin{pmatrix}
                \bb{d}_{\bb{\theta}}(Y_i) \\[1mm]
                \bb{d}_{\bb{\kappa}}(Y_i)
            \end{pmatrix}
            + o_{\hspace{0.3mm}\PP_{H_0}}(1)\bb{1}_2.
        \end{equation}
    \end{proposition}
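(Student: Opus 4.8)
I would prove \eqref{eq:prop:MLE.score.asymptotics.explicit.r.prime} and \eqref{eq:prop:MLE.score.asymptotics.explicit.MLE} one at a time and then obtain \eqref{eq:prop:MLE.score.asymptotics.explicit.score} by substituting both into the first-order expansion \eqref{eq:prop:MLE.score.asymptotics.Taylor} and using the central limit theorem of Proposition \ref{prop:asymptotic.normality.vector.d} to discard the cross terms. For \eqref{eq:prop:MLE.score.asymptotics.explicit.r.prime}, regard $\bb{r}_n$ in \eqref{eq:score.statistic.known.2} as a function of $\bb{\kappa}'=(\mu',\sigma')^{\top}$ through $(X_i-\mu')/\sigma'$; writing $\dot{\bb{d}}_{\bb{\theta}}$ for the derivative of $\bb{d}_{\bb{\theta}}$ in its argument, the chain rule gives, on the almost sure event that no $X_i$ equals $\mu$,
\[
    \bb{r}_n'(\bb{\kappa}) \;=\; -\frac{1}{\sigma}\,\frac{1}{n}\sum_{i=1}^n \dot{\bb{d}}_{\bb{\theta}}(Y_i)\,(1,\,Y_i),
\]
with $(1,Y_i)$ a row vector, so each summand is a $2\times2$ matrix. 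By \eqref{eq:vector.d}, every entry of $\dot{\bb{d}}_{\bb{\theta}}(y)(1,y)$ is a bounded function times one of $|y|^{\lambda-1},\,|y|^{\lambda-1}\log|y|,\,|y|^{\lambda}\log|y|$, each integrable against the $\text{APD}(\bb{\theta}_0,(0,1)^{\top})$ density since $\lambda\geq1$ and that density has exponential tails; the strong law then gives $\bb{r}_n'(\bb{\kappa})\stackrel{\text{a.s.}}{\longrightarrow}-\sigma^{-1}\big(\EE[\dot{\bb{d}}_{\bb{\theta}}(Y_1)],\EE[Y_1\dot{\bb{d}}_{\bb{\theta}}(Y_1)]\big)$. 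An integration by parts (splitting the integral at $0$; boundary terms vanish by continuity of $\bb{d}_{\bb{\theta}}$ and of the density at $0$ and by exponential decay at $\pm\infty$), together with $d_{\mu}(y)=-\tfrac{\partial}{\partial y}\log f(y\nvert\bb{\theta}_0)$ and $d_{\sigma}(y)=-1-y\tfrac{\partial}{\partial y}\log f(y\nvert\bb{\theta}_0)$ read off from \eqref{eq:d.kappa}, gives $\EE[\dot{\bb{d}}_{\bb{\theta}}(Y_1)]=\EE[\bb{d}_{\bb{\theta}}(Y_1)d_{\mu}(Y_1)]$ and $\EE[Y_1\dot{\bb{d}}_{\bb{\theta}}(Y_1)]=\EE[\bb{d}_{\bb{\theta}}(Y_1)d_{\sigma}(Y_1)]$; since $\EE[\bb{d}_{\bb{\theta}}(Y_1)]=\bb{0}$ (implicit in Proposition \ref{prop:asymptotic.normality.vector.d}), the limit matrix equals $\EE[\bb{d}_{\bb{\theta}}(Y_1)\bb{d}_{\bb{\kappa}}(Y_1)^{\top}]=J_{\bb{\theta}\bb{\kappa}}$, which is \eqref{eq:prop:MLE.score.asymptotics.explicit.r.prime}.

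For \eqref{eq:prop:MLE.score.asymptotics.explicit.MLE} I would linearize $\hat{\bb{\kappa}}_n$. The estimator $\hat{\mu}_n$ satisfies the monotone estimating equation $\sum_{i=1}^n|X_i-\hat{\mu}_n|^{\lambda-1}\text{sign}(X_i-\hat{\mu}_n)=0$ from \eqref{eq:MLE.estimators} (the sample median when $\lambda=1$), and $\hat{\sigma}_n$ has the explicit form in \eqref{eq:MLE.estimators}, whose fluctuations are then those of a sample average plus a correction governed by $\hat{\mu}_n-\mu$ (controlled via Lemma \ref{lem:convergence.almost.sure.MLE.estimators} and Lemma \ref{lem:standard.ULLN}). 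Combining strong consistency (Lemma \ref{lem:convergence.almost.sure.MLE.estimators}) with the asymptotic theory of $M$-estimators with monotone score function (and, for $\lambda=1$, the Bahadur representation of the sample median), one gets $n^{1/2}(\hat{\bb{\kappa}}_n-\bb{\kappa})=-N^{-1}n^{-1/2}\sum_{i=1}^n\bb{d}_{\bb{\kappa}}(Y_i)+o_{\PP_{H_0}}(1)\bb{1}_2$, where $N$ is the derivative of $\bb{\kappa}'\mapsto\EE[\bb{d}_{\bb{\kappa}}((X_1-\mu')/\sigma')]$ at $\bb{\kappa}'=\bb{\kappa}$ (differentiation being legitimate since the $\bb{\kappa}'$-dependence can be transferred onto the smooth APD distribution by a change of variables). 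The same integration by parts as before, now applied to $\bb{d}_{\bb{\kappa}}$, gives $N=-\sigma^{-1}\EE[\bb{d}_{\bb{\kappa}}(Y_1)\bb{d}_{\bb{\kappa}}(Y_1)^{\top}]=-\sigma^{-1}J_{\bb{\kappa}\bb{\kappa}}$, and hence, using $n^{-1/2}\sum_{i=1}^n\bb{d}_{\bb{\kappa}}(Y_i)=O_{\PP_{H_0}}(1)$ (Proposition \ref{prop:asymptotic.normality.vector.d}), $n^{1/2}(\hat{\bb{\kappa}}_n-\bb{\kappa})=\sigma J_{\bb{\kappa}\bb{\kappa}}^{-1}n^{-1/2}\sum_{i=1}^n\bb{d}_{\bb{\kappa}}(Y_i)+o_{\PP_{H_0}}(1)\bb{1}_2$. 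The case $\lambda=1$ is the main obstacle: the equation for $\mu$ is not differentiable, so one argues by hand, e.g.\ the Bahadur representation gives $n^{1/2}(\hat{\mu}_n-\mu)=4\sigma\,n^{-1/2}\sum_{i=1}^n d_{\mu}(Y_i)+o_{\PP_{H_0}}(1)$ and a first-order expansion of $\hat{\sigma}_n=\tfrac{1}{2n}\sum_{i=1}^n|X_i-\hat{\mu}_n|$ gives $n^{1/2}(\hat{\sigma}_n-\sigma)=\sigma\,n^{-1/2}\sum_{i=1}^n d_{\sigma}(Y_i)+o_{\PP_{H_0}}(1)$, matching $J_{\bb{\kappa}\bb{\kappa}}^{-1}=\text{diag}(4,1)$ at $\lambda=1$.

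Finally, for \eqref{eq:prop:MLE.score.asymptotics.explicit.score}, substitute \eqref{eq:prop:MLE.score.asymptotics.explicit.r.prime} and \eqref{eq:prop:MLE.score.asymptotics.explicit.MLE} into \eqref{eq:prop:MLE.score.asymptotics.Taylor} and write $n^{1/2}\bb{r}_n(\bb{\kappa})=n^{-1/2}\sum_{i=1}^n\bb{d}_{\bb{\theta}}(Y_i)$. The middle term becomes $\big(-\sigma^{-1}J_{\bb{\theta}\bb{\kappa}}+o_{\PP_{H_0}}(1)I_2\big)\big(\sigma J_{\bb{\kappa}\bb{\kappa}}^{-1}n^{-1/2}\sum_{i=1}^n\bb{d}_{\bb{\kappa}}(Y_i)+o_{\PP_{H_0}}(1)\bb{1}_2\big)$, whose three cross terms are $o_{\PP_{H_0}}(1)$ since $n^{-1/2}\sum_{i=1}^n\bb{d}_{\bb{\kappa}}(Y_i)=O_{\PP_{H_0}}(1)$ by Proposition \ref{prop:asymptotic.normality.vector.d}, so it equals $-J_{\bb{\theta}\bb{\kappa}}J_{\bb{\kappa}\bb{\kappa}}^{-1}n^{-1/2}\sum_{i=1}^n\bb{d}_{\bb{\kappa}}(Y_i)+o_{\PP_{H_0}}(1)\bb{1}_2$; collecting the two surviving sums into one matrix--vector product gives \eqref{eq:prop:MLE.score.asymptotics.explicit.score}. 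The only genuinely delicate step is the linearization of $\hat{\bb{\kappa}}_n$ in the non-smooth case $\lambda=1$; the integration-by-parts identities relating the almost sure limits of the sample Jacobians to the blocks of $J$, the uses of the strong law, of Lemma \ref{lem:standard.ULLN} and of $M$-estimator theory, and the closing algebra are all routine once the $o_{\PP_{H_0}}/O_{\PP_{H_0}}$ bookkeeping is done carefully.
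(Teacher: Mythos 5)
Your proposal is correct, and for \eqref{eq:prop:MLE.score.asymptotics.explicit.r.prime} and \eqref{eq:prop:MLE.score.asymptotics.explicit.score} it coincides with the paper's argument: the paper also obtains $\bb{r}_n'(\bb{\kappa})$ by a law of large numbers followed by the chain rule and exactly the integration by parts you describe (turning $\EE[\dot{\bb{d}}_{\bb{\theta}}(Y)\,\partial Y/\partial\bb{\kappa}^{\top}]$ into $-\sigma^{-1}\EE[\bb{d}_{\bb{\theta}}(Y)\bb{d}_{\bb{\kappa}}(Y)^{\top}]$ with vanishing boundary terms), and it closes with the same $O_{\PP}/o_{\PP}$ bookkeeping, using that $n^{-1/2}\sum_i\bb{d}_{\bb{\kappa}}(Y_i)=O_{\PP_{H_0}}(1)$. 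Where you genuinely diverge is \eqref{eq:prop:MLE.score.asymptotics.explicit.MLE}: the paper gets the joint linearization of $\hat{\bb{\kappa}}_n$ in one stroke by applying Theorem 5.23 of \cite{MR1652247} to the criterion $m_{\bb{\kappa}}(x)=|(x-\kappa_1)/\kappa_2|^{\lambda}-\log\kappa_2$ together with the strong consistency from Lemma \ref{lem:convergence.almost.sure.MLE.estimators}; that theorem only requires a local Lipschitz property and a second-order expansion of the population criterion, so the non-smooth case $\lambda=1$ is absorbed automatically and the matrix $\sigma J_{\bb{\kappa}\bb{\kappa}}^{-1}$ appears as the inverse of the second-derivative (information) matrix. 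You instead exploit the decoupling of the estimating equations, treating $\hat{\mu}_n$ via monotone-score $M$-estimator theory (Bahadur representation of the median when $\lambda=1$) and $\hat{\sigma}_n$ via a direct expansion of its explicit formula; your identification of the normalizing matrix $N=-\sigma^{-1}J_{\bb{\kappa}\bb{\kappa}}$ by the same integration by parts, and your check that the $\lambda=1$ constants match $J_{\bb{\kappa}\bb{\kappa}}^{-1}=\mathrm{diag}(4,1)$, are both right. The trade-off is that your route is more elementary and makes the $\lambda=1$ mechanism explicit, at the cost of having to verify two separate representations and the negligibility of the $\hat{\mu}_n$-correction inside $\hat{\sigma}_n$, whereas the paper's citation of a single off-the-shelf theorem is shorter but less transparent about why $\lambda=1$ causes no trouble here.
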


    By combining Proposition~\ref{prop:asymptotic.normality.vector.d} and Proposition~\ref{prop:MLE.score.asymptotics.explicit}, we see that
    \begin{equation}\label{eq:asymp.r.n.under.H0}
        n^{1/2} \bb{r}_n(\hat{\bb{\kappa}}_n) \stackrel{\PP_{H_0}}{\scalebox{2}[1.2]{$\rightsquigarrow$}} \mathcal{N}_2(\bb{0}_2, \Sigma), \quad \text{as } n\to \infty,
    \end{equation}
    where the asymptotic covariance matrix $\Sigma$ is given by:
    \begin{align}\label{eq:calculation.Sigma}
        \Sigma
        &\stackrel{\phantom{\eqref{eq:asymptotic.normality.vector.d}}}{=} \left(I_2 \, ; \, - J_{\bb{\theta}\bb{\kappa}} J_{\bb{\kappa}\bb{\kappa}}^{-1}\right)
        \begin{pmatrix}
            J_{\bb{\theta}\bb{\theta}} &J_{\bb{\theta}\bb{\kappa}} \\[1mm]
            J_{\bb{\theta}\bb{\kappa}}^{\top} &J_{\bb{\kappa}\bb{\kappa}}
        \end{pmatrix}
        \begin{pmatrix}
            I_2 \\[1mm]
            -J_{\bb{\theta}\bb{\kappa}} J_{\bb{\kappa}\bb{\kappa}}^{-1}
        \end{pmatrix}
        \stackrel{\phantom{\eqref{eq:asymptotic.normality.vector.d}}}{=}
        J_{\bb{\theta}\bb{\theta}} - J_{\bb{\theta}\bb{\kappa}}J_{\bb{\kappa}\bb{\kappa}}^{-1} J_{\bb{\theta}\bb{\kappa}}^{\top} \notag \\
        &\stackrel{\eqref{eq:asymptotic.normality.vector.d}}{=}
        \begin{pmatrix}
            4(1 + \lambda) &\hspace{-1.5mm}0 \\
            0 &\hspace{-1.5mm}\frac{(1+1/\lambda)\psi_1(1+1/\lambda)+\phi^2-1}{\lambda^3}
        \end{pmatrix}
        -
        \begin{pmatrix}
            -\frac{2\lambda^{2-1/\lambda}}{\Gamma(1/\lambda)} &\hspace{-1.5mm}0 \\
            0 &\hspace{-1.5mm}-\frac{\phi}{\lambda}
        \end{pmatrix}
        \begin{pmatrix}
            \frac{\Gamma(1/\lambda)}{\lambda^{2-2/\lambda}\Gamma(2-1/\lambda)} &\hspace{-1.5mm}0 \\
            0 &\hspace{-1.5mm}\frac{1}{\lambda}
        \end{pmatrix}
        \begin{pmatrix}
            -\frac{2\lambda^{2-1/\lambda}}{\Gamma(1/\lambda)} &\hspace{-1.5mm}0 \\
            0 &\hspace{-1.5mm}-\frac{\phi}{\lambda}
        \end{pmatrix} \notag \\[1mm]
        &\stackrel{\phantom{\eqref{eq:asymptotic.normality.vector.d}}}{=}
        \begin{pmatrix}
            4(1 + \lambda) - \frac{4\lambda^2}{\Gamma(2 - 1/\lambda) \Gamma(1/\lambda)} &0 \\
            0 &\frac{(1+1/\lambda)\psi_1(1+1/\lambda)-1}{\lambda^3}
        \end{pmatrix}.
    \end{align}
    Given \eqref{eq:vector.d} and \eqref{eq:score.statistic.unknown.2}, we deduce from \eqref{eq:asymp.r.n.under.H0} and \eqref{eq:calculation.Sigma} that, as $n\to \infty$,
    \begin{equation}\label{eqn:law.B.K.H0}
        n^{1/2}
        \begin{pmatrix}
            S_{\lambda}(\Xn)\\[1mm]
            K_{\lambda}(\Xn) - \frac{\lambda+\log \lambda + \psi(1/\lambda)}{\lambda}
        \end{pmatrix}
        \stackrel{\PP_{H_0}}{\scalebox{2}[1.2]{$\rightsquigarrow$}} \mathcal{N}_2
        \left(\bb{0},
            \begin{pmatrix}
                1 + \lambda - \frac{\lambda^2}{\Gamma(2-1/\lambda) \Gamma(1/\lambda)} & 0 \\[1mm]
                0 & \frac{(1+1/\lambda) \psi_1(1+1/\lambda)-1}{\lambda}
            \end{pmatrix}
        \right).
    \end{equation}
    Assuming that we have proofs for Propositions~\ref{prop:asymptotic.normality.vector.d},~\ref{prop:MLE.score.asymptotics.Taylor}~and~\ref{prop:MLE.score.asymptotics.explicit} (see Section~\ref{sec:main.result.1.proofs} below), this ends the proof of Theorem~3.3.

\subsection{Proofs of Propositions~\ref{prop:asymptotic.normality.vector.d},~\ref{prop:MLE.score.asymptotics.Taylor}~and~\ref{prop:MLE.score.asymptotics.explicit} to complete the proof of Theorem~3.3}\label{sec:main.result.1.proofs}

    \begin{proof}[\bf Proof of Proposition~\ref{prop:asymptotic.normality.vector.d}]
        The asymptotic normality in \eqref{eq:asymptotic.normality.vector.d} is a direct consequence of the central limit theorem.
        Let $X\sim \mathrm{APD}_{\lambda}(\bb{\theta}_0,\bb{\kappa})$ and $Y \leqdef \sigma^{-1} (X - \mu)$.
        In order to conclude the proof, we show below how to compute the covariances between $d_{\theta_1}(Y)$, $d_{\theta_2}(Y)$, $d_{\mu}(Y)$ and $d_{\sigma}(Y)$.
        Before that, we gather some facts. For any given $\lambda\geq 1$, the density function of $Y$ is
        \begin{equation}\label{eq:prop:asymptotic.normality.vector.d.density.calculs}
            f(y \nvert \bb{\theta}_0,(0,1)^{\top}) = \frac{e^{-\frac{1}{\lambda} |y|^{\lambda}}}{2 \lambda^{1/\lambda} \Gamma(1+1/\lambda)}
            = \frac{e^{-\frac{1}{\lambda} |y|^{\lambda}}}{2 \lambda^{1/\lambda-1} \Gamma(1/\lambda)}, \quad y\in \R.
        \end{equation}
        Recall the definitions of the gamma, digamma and trigamma functions (for $z > 0$):
        \begin{equation}\label{eq:property.gamma.definition}
            \Gamma(z) \leqdef \int_0^{\infty} t^{z - 1} e^{-t} \rd t, \quad \psi(z) \leqdef \frac{\rd}{\rd z} \log \Gamma(z) = \frac{\Gamma'(z)}{\Gamma(z)}
            \quad \text{and} \quad \psi_1(z) \leqdef \frac{\rd}{\rd z} \psi(z),
        \end{equation}
        and some well-known properties they satisfy (see, e.g., \cite[Chapter 6]{MR0167642}):
        \begin{align}
            \Gamma(1 + z) &= z \Gamma(z), \label{eq:property.gamma.factorial} \\[1.5mm]
            \psi(1 + z) &= \psi(z) + \frac{1}{z}, \label{eq:property.digamma} \\[0.5mm]
            \psi_1(1 + z) &= \psi_1(z) - \frac{1}{z^2}, \label{eq:property.trigamma} \\[0.5mm]
            \int_0^{\infty} t^{z - 1} (\log t) e^{-t} \rd t &= \Gamma(z)\psi(z), \label{eq:property.gamma.integral.log.1} \\
            \int_0^{\infty} t^{z - 1} (\log t)^2 e^{-t} \rd t &= \Gamma(z)(\psi_1(z) + \psi^2(z)). \label{eq:property.gamma.integral.log.2}
        \end{align}
        Using these properties, we can easily verify that, for $k > -1/\lambda$ and $U = |Y|^{\lambda}/\lambda\sim\mathrm{Gamma}(1/\lambda,1)$,
        \begin{align}
            \EE\big[U^k\big]&=\frac{\Gamma(k+1/\lambda)}{\Gamma(1/\lambda)},\label{eq:exp.u}\\[0.5mm]
            \EE\big[U^k\log U\big]&=\frac{\Gamma(k+1/\lambda)\psi(k+1/\lambda)}{\Gamma(1/\lambda)},\label{eq:exp.u.log}\\[0.5mm]
            \EE\big[U^k(\log U)^2\big]&=\frac{\Gamma(k+1/\lambda)}{\Gamma(1/\lambda)}\big[\psi_1(k+1/\lambda)+\psi^2(k+1/\lambda)\big].\label{eq:exp.u.log2}
        \end{align}
        We obtain, for $a>-1$,
        \vspace{-2mm}
        \begin{align}
            \EE\big[|Y|^a\big]&=\lambda^{a/\lambda}\EE\big[(|Y|^{\lambda}/\lambda)^{a/\lambda}\big]
            =\lambda^{a/\lambda}\EE\big[U^{a/\lambda}\big] \stackrel{\eqref{eq:exp.u}}{=}\frac{\lambda^{a/\lambda}\Gamma((a+1)/\lambda)}{\Gamma(1/\lambda)},\label{eq:expectation.ya}\\
        %%%%%%%
            \EE\big[|Y|^a\log|Y|\big]&=\lambda^{a/\lambda-1}\EE\big[(|Y|^{\lambda}/\lambda)^{a/\lambda}(\log(|Y|^{\lambda}/\lambda)+\log\lambda)\big]\notag\\
            &=
            \lambda^{a/\lambda-1}\left(\EE\big[U^{a/\lambda}\log U\big]+\EE\big[U^{a/\lambda}\big]\log\lambda\right)\notag\\
            &\hspace{-5mm}\stackrel{\eqref{eq:exp.u},\eqref{eq:exp.u.log}}{=}\lambda^{a/\lambda-1}\left(\frac{\Gamma((a+1)/\lambda)\psi((a+1)/\lambda)}{\Gamma(1/\lambda)}+\frac{\Gamma((a+1)/\lambda))\log\lambda}{\Gamma(1/\lambda)}\right)\notag\\
            &=\frac{\lambda^{a/\lambda-1}\Gamma((a+1)/\lambda)(\psi((a+1)/\lambda)+\log\lambda)}{\Gamma(1/\lambda)},\label{eq:expectation.ya.log}\\[2mm]
        %%%%%%%
            \EE\big[|Y|^a(\log|Y|)^2\big]&=\lambda^{a/\lambda-2}\EE\big[(|Y|^{\lambda}/\lambda)^{a/\lambda}(\log(|Y|^{\lambda}/\lambda)+\log\lambda)^2\big]\notag\\
            &=
            \lambda^{a/\lambda-2}\left(\EE\big[U^{a/\lambda}(\log U)^2\big]+\EE\big[U^{a/\lambda}\big](\log\lambda)^2+2\,\EE\big[U^{a/\lambda}\log U\big]\log\lambda\right)\notag\\
            &\hspace{-7mm}\stackrel{\eqref{eq:exp.u},\eqref{eq:exp.u.log},\eqref{eq:exp.u.log2}}{=}\lambda^{a/\lambda-2}\left(
            \frac{\Gamma((a+1)/\lambda)}{\Gamma(1/\lambda)}\big[\psi_1((a+1)/\lambda)+\psi^2((a+1)/\lambda)\big]\right.\notag\\
            &\left.\hspace{20mm}+\frac{\Gamma((a+1)/\lambda)(\log\lambda)^2}{\Gamma(1/\lambda)}+\frac{2\,\Gamma((a+1)/\lambda)\psi((a+1)/\lambda)\log\lambda}{\Gamma(1/\lambda)}\right)\notag\\
            %&=\frac{\lambda^{a/\lambda-2}\Gamma((a+1)/\lambda)}{\Gamma(1/\lambda)}
            %\left(\psi_1((a+1)/\lambda))+\psi^2((a+1)/\lambda))+(\log\lambda)^2+2\,\psi((a+1)/\lambda)\log\lambda\right)\notag\\
            &=\frac{\lambda^{a/\lambda-2}\Gamma((a+1)/\lambda)}{\Gamma(1/\lambda)}
            \left(\psi_1((a+1)/\lambda)+[\psi((a+1)/\lambda)+\log\lambda]^2\right).\label{eq:expectation.ya.log2}
        \end{align}

        By symmetry of the density $f(\,\cdot\, |\,\bb{\theta}_0,(0,1)^{\top})$ with respect to 0 and anti-symmetry of the integrands, we have
        \begin{equation}
            J_{\theta_1 \theta_2} = J_{\theta_1 \sigma} = J_{\theta_2 \mu} = J_{\mu \sigma} = \boxed{0}.
        \end{equation}
        If we define $\nu\leqdef \lambda+\log \lambda + \psi(1/\lambda)$, we have $\phi=1+\nu$. Here is how we compute the other covariances:
        \begin{align}
            \hspace{-4.1mm}
            J_{\theta_1 \theta_1}
            &= \EE[d_{\theta_1}(Y) d_{\theta_1}(Y)] \stackrel{\eqref{eq:vector.d}}{=} 4\, \EE[|Y|^{2\lambda}]
            \stackrel{\eqref{eq:expectation.ya}}{=}\frac{4\lambda^{2}\Gamma(2+1/\lambda)}{\Gamma(1/\lambda)}
            \stackrel{\eqref{eq:property.gamma.factorial}}{=}4\lambda^{2}(1+1/\lambda)(1/\lambda)=\boxed{4(1+\lambda)},\label{eq:J.theta1.theta1}\\
            %%%%%%%%%
            \hspace{-4.1mm}
            J_{\theta_1 \mu}
            &= \EE[d_{\theta_1}(Y) d_{\mu}(Y)] \stackrel{\eqref{eq:vector.d}}{=} -2 \,\EE[|Y|^{2\lambda - 1}]
            \stackrel{\eqref{eq:expectation.ya}}{=}\frac{-2\lambda^{2-1/\lambda}\Gamma(2)}{\Gamma(1/\lambda)}=\boxed{\frac{-2\lambda^{2-1/\lambda}}{\Gamma(1/\lambda)}},
            \label{eq:J.theta1.mu}\\
            %%%%%%%%%
            \hspace{-4.1mm}
            J_{\mu \mu}
            &= \EE[d_{\mu}(Y) d_{\mu}(Y)] \stackrel{\eqref{eq:vector.d}}{=}\EE\big[|Y|^{2\lambda - 2}\big]\stackrel{\eqref{eq:expectation.ya}}{=}
            \boxed{\frac{\lambda^{2-2/\lambda}\Gamma(2-1/\lambda)}{\Gamma(1/\lambda)}},\label{eq:J.mu.muq}\\
            %%%%%%%%
            \hspace{-4.1mm}
            J_{\sigma \sigma}
            &= \EE[d_{\sigma}(Y) d_{\sigma}(Y)] \stackrel{\eqref{eq:vector.d}}{=}\EE\big[(|Y|^{\lambda}-1)^2\big]=\EE\big[|Y|^{2\lambda}\big]+1-2\,\EE\big[|Y|^{\lambda}\big]\notag\\
            &\stackrel{\eqref{eq:J.theta1.theta1},\eqref{eq:expectation.ya}}{=}
            (1+\lambda)+1-\frac{2\lambda\,\Gamma(1+1/\lambda)}{\Gamma(1/\lambda)}
            \stackrel{\eqref{eq:property.gamma.factorial}}{=}2+\lambda-2\lambda(1/\lambda)=\boxed{\lambda},\label{eq:J.sigma.sigma}\\
            %%%%%%
            \hspace{-4.1mm}
            J_{\theta_2 \sigma}
            &= \EE[d_{\theta_2}(Y) d_{\sigma}(Y)]  \stackrel{\eqref{eq:vector.d}}{=}\lambda^{-1}\EE\big[|Y|^{\lambda} \log|Y|\big]-
            \lambda^{-1}\EE\big[|Y|^{2\lambda} \log|Y|\big]+\lambda^{-2}\nu\EE\big[|Y|^{\lambda}\big]-\lambda^{-2}\nu\notag\\
            &\stackrel{\eqref{eq:expectation.ya},\eqref{eq:expectation.ya.log}}{=}\frac{\Gamma(1+1/\lambda)\lambda^{-1}\nu}{\Gamma(1/\lambda)}
            - \frac{\Gamma(2+1/\lambda)(\psi(2+1/\lambda)+\log\lambda)}{\Gamma(1/\lambda)}  +  \frac{\Gamma(1+1/\lambda)\lambda^{-1}\nu}{\Gamma(1/\lambda)}-\lambda^{-2}\nu\notag\\
            &\stackrel{\eqref{eq:property.gamma.factorial},\eqref{eq:property.digamma}}{=}\lambda^{-2}\nu - (1+1/\lambda)\lambda^{-1}(\psi(1+1/\lambda)+(1+1/\lambda)^{-1}+\log\lambda)  + \lambda^{-2}\nu - \lambda^{-2}\nu\notag\\
            &=\lambda^{-2}\nu - (1+1/\lambda)\lambda^{-1}(\nu+(1+1/\lambda)^{-1})
            =\lambda^{-2}\nu - (1+1/\lambda)\lambda^{-1}\nu - \lambda^{-1}   \notag\\
            &=- \lambda^{-1}(-\lambda^{-1}\nu + (1+1/\lambda)\nu + 1)=- \lambda^{-1}(1+\nu)=\boxed{-\phi/\lambda},\label{eq:J.theta2.sigma}\\
            %%%%%%%%
            \hspace{-4.1mm}
            J_{\theta_2 \theta_2}
            & = \EE[d_{\theta_2}(Y) d_{\theta_2}(Y)] \stackrel{\eqref{eq:vector.d}}{=} \lambda^{-2}\EE\big[|Y|^{2\lambda} (\log|Y|)^2\big]-2\lambda^{-3}\nu\EE\big[|Y|^{\lambda} \log|Y|\big]+\lambda^{-4}\nu^2\notag\\
            &\stackrel{\eqref{eq:expectation.ya.log},\eqref{eq:expectation.ya.log2}}{=}\frac{\lambda^{-2}\Gamma(2+1/\lambda)}{\Gamma(1/\lambda)}
            \left(\psi_1(2+1/\lambda)+[\psi(2+1/\lambda)+\log\lambda]^2\right)-\frac{2\Gamma(1+1/\lambda)\lambda^{-3}\nu^2}{\Gamma(1/\lambda)}+\lambda^{-4}\nu^2\notag\\
            &\stackrel{\eqref{eq:property.gamma.factorial},\eqref{eq:property.digamma}}{=}(1+1/\lambda)\lambda^{-3}
            \left(\psi_1(2+1/\lambda)+[\psi(1/\lambda)+\lambda+(1+1/\lambda)^{-1}+\log\lambda]^2\right)-\lambda^{-4}\nu^2\notag\\
            &\stackrel{\eqref{eq:property.trigamma}}{=}(1+1/\lambda)\lambda^{-3}
            \left(\psi_1(1+1/\lambda)-(1+1/\lambda)^{-2}+[(1+1/\lambda)^{-1}+\nu]^2\right)-\lambda^{-4}\nu^2\notag\\
            &=(1+1/\lambda)\lambda^{-3}\left(\psi_1(1+1/\lambda)+\nu^2+2\nu(1+1/\lambda)^{-1}\right)-\lambda^{-4}\nu^2\notag\\
            &=\lambda^{-3}\left((1+1/\lambda)\psi_1(1+1/\lambda)+\nu^2(1+1/\lambda)+2\nu\right)-\lambda^{-4}\nu^2\notag\\
            &=\lambda^{-3}\left[(1+1/\lambda)\psi_1(1+1/\lambda)+\nu(2+\nu)\right]
            =\boxed{\lambda^{-3}\left[(1+1/\lambda)\psi_1(1+1/\lambda)+\phi^2-1\right]}\label{eq:J.theta2.theta2}.
        \end{align}
        \vspace{-3mm}
        This ends the proof.
    \end{proof}

    \begin{proof}[\bf Proof of Proposition~\ref{prop:MLE.score.asymptotics.Taylor}]
        We work under $H_0$ throughout this proof.
        Using the fundamental theorem of calculus to expand $\bb{r}_n(\hat{\bb{\kappa}}_n)$ around $\bb{\kappa}$, we have
        \begin{equation}\label{eq:prop:MLE.score.asymptotics.expansion.start}
                \bb{r}_n(\hat{\bb{\kappa}}_n) = \bb{r}_n(\bb{\kappa}) + \int_0^1 \bb{r}_n'(\bb{\kappa}_{n,v}^{\star}) \rd v \, (\hat{\bb{\kappa}}_n - \bb{\kappa}),
        \end{equation}
        where $\bb{\kappa}_{n,v}^{\star} \leqdef \bb{\kappa} + v(\hat{\bb{\kappa}}_n - \bb{\kappa})$ for $v\in [0,1]$.

        From \eqref{eq:score.statistic.known.2} and \eqref{eq:vector.d}, we know that for all $\bb{t}\in \R \times (0,\infty)$,
        \begin{equation}\label{eq:partial.derivatives.expression.in.taylor.expansion}
            \bb{r}_n'(\bb{t}) \leqdef\left(\frac{\partial}{\partial\mu}\bb{r}_n(\bb{\kappa}), \frac{\partial}{\partial\sigma}\bb{r}_n(\bb{\kappa})\right)=
            \begin{pmatrix}
                \frac{1}{n} \sum_{i=1}^n U_1(X_i,\bb{t}) & \frac{1}{n} \sum_{i=1}^n U_2(X_i,\bb{t}) \\[2mm]
                \frac{1}{n} \sum_{i=1}^n U_3(X_i,\bb{t}) & \frac{1}{n} \sum_{i=1}^n U_4(X_i,\bb{t})
            \end{pmatrix}
        \end{equation}
        where $y \leqdef (x - t_1)/t_2$ and
        \begin{alignat*}{3}
            &U_1(x,\bb{t}) \leqdef \frac{2\lambda}{\sigma} |y|^{\lambda-1}; \quad &&U_2(x,\bb{t}) \leqdef \frac{2\lambda}{\sigma} y |y|^{\lambda-1}; \\
            &U_3(x,\bb{t}) \leqdef \frac{1}{\lambda\sigma} |y|^{\lambda-1} \mathrm{sign}\left(y\right) \left[\lambda \log |y| + 1\right]; \quad &&U_4(x,\bb{t}) \leqdef \frac{1}{\lambda\sigma} |y|^{\lambda} \left[\lambda \log |y| + 1\right].
        \end{alignat*}
        By the triangle inequality and Lemma~\ref{lem:standard.ULLN}, we have, for all $(k,\lambda)\in \{1,2,3,4\} \times [1,\infty) \backslash \{(3,1)\}$,
        \begin{equation}\label{eq:uniform.convergence.partial.derivatives.in.taylor.expansion}
            \begin{aligned}
                &\PP\bigg(\limsup_{n\to\infty} \sup_{v\in [0,1]} \Big|\frac{1}{n} \sum_{i=1}^n (U_k(X_i,\bb{\kappa}_{n,v}^{\star}) - U_k(X_i,\bb{\kappa}))\Big| > 0\bigg) \\
                &\quad\leq 2 \, \PP\bigg(\limsup_{n\to\infty} \sup_{v\in [0,1]} \Big|\frac{1}{n} \sum_{i=1}^n U_k(X_i,\bb{\kappa}_{n,v}^{\star}) - \overline{U}_k(\bb{\kappa})\Big| > 0\bigg) = 0.
            \end{aligned}
        \end{equation}
        Since we already know from \eqref{eq:prop:MLE.score.asymptotics.explicit.MLE} (this will be proved below independently of Proposition~\ref{prop:MLE.score.asymptotics.Taylor})
        that
        \begin{equation}\label{eq:convergence.mu.sigma.in.taylor.expansion}
            \hat{\bb{\kappa}}_n - \bb{\kappa} = O_{\PP}(n^{-1/2}) \bb{1}_2,
        \end{equation}
        we deduce from \eqref{eq:prop:MLE.score.asymptotics.expansion.start}, \eqref{eq:partial.derivatives.expression.in.taylor.expansion}, \eqref{eq:uniform.convergence.partial.derivatives.in.taylor.expansion} and \eqref{eq:convergence.mu.sigma.in.taylor.expansion} that, for all $(k,\lambda)\in \{1,2,3,4\} \times [1,\infty) \backslash \{(3,1)\}$,
        \begin{equation}\label{eq:lem:standard.ULLN.expansion.final}
            \bb{r}_n(\hat{\bb{\kappa}}_n) = \bb{r}_n(\bb{\kappa}) + \bb{r}_n'(\bb{\kappa}) (\hat{\bb{\kappa}}_n - \bb{\kappa}) + o_{\hspace{0.3mm}\PP}(n^{-1/2}) \bb{1}_2,
        \end{equation}
        which is the statement we wanted to prove, see \eqref{eq:prop:MLE.score.asymptotics.Taylor}.

        When $(k,\lambda) = (3,1)$, we have to be a bit more careful.
        Indeed, Lemma~\ref{lem:standard.ULLN} cannot be applied to $U_3$ in this case because the log term implies that, for any $\delta > 0$, $\sup_{\bb{t}\in B_{\delta}[\bb{\kappa}]} |U_3(x,\bb{t})| = \infty$ for all $x\in B_{\delta}[\mu]$, and thus ({\rm C.2}) cannot be satisfied.
        Instead, we use Lemma~\ref{lem:example.1} below (again, this will be proved independently of Proposition~\ref{prop:MLE.score.asymptotics.Taylor}), which is a consequence of a uniform law of large numbers developed in \cite{MR3842623} for summands that blow up.
        By using successively Jensen's inequality, Fubini's theorem, the triangle inequality and Lemma~\ref{lem:example.1}, we have
        \begin{equation}
            \begin{aligned}
                &\EE\bigg|\int_0^1 \frac{1}{n} \sum_{i=1}^n U_3(X_i,\bb{\kappa}_{n,v}^{\star}) \rd v - \int_0^1 \frac{1}{n} \sum_{i=1}^n U_3(X_i,\bb{\kappa}) \rd v \bigg| \\
                &\hspace{10mm}\leq \int_0^1 \EE\bigg|\frac{1}{n} \sum_{i=1}^n U_3(X_i,\bb{\kappa}_{n,v}^{\star}) - \frac{1}{n} \sum_{i=1}^n U_3(X_i,\bb{\kappa})\bigg| \rd v \\
                &\hspace{10mm}\leq 2 \sup_{v\in [0,1]} \EE\bigg|\frac{1}{n} \sum_{i=1}^n U_3(X_i,\bb{\kappa}_{n,v}^{\star}) -  \EE\big[U_3(X_1,\bb{\kappa})\big]\bigg|
                \stackrel{n\to\infty}{\longrightarrow} 0.
            \end{aligned}
        \end{equation}
        By Markov's inequality, this yields, for $\lambda = 1$,
        \begin{equation}\label{eq:prop:MLE.score.asymptotics.special.case.end.U.2}
            \bigg|\int_0^1 \frac{1}{n} \sum_{i=1}^n U_3(X_i,\bb{\kappa}_{n,v}^{\star}) \rd v - \int_0^1 \frac{1}{n} \sum_{i=1}^n U_3(X_i,\bb{\kappa}) \rd v \bigg| \stackrel{\PP}{\longrightarrow} 0.
        \end{equation}
        Putting \eqref{eq:convergence.mu.sigma.in.taylor.expansion} and \eqref{eq:prop:MLE.score.asymptotics.special.case.end.U.2} together into \eqref{eq:prop:MLE.score.asymptotics.expansion.start} proves the statement of the proposition when $(k,\lambda) = (3,1)$, assuming that Lemma~\ref{lem:example.1} is true.
    \end{proof}

    In order to conclude the proof of Proposition~\ref{prop:MLE.score.asymptotics.Taylor}, it remains to prove the following lemma.

    \begin{lemma}\label{lem:example.1}
        Let $X_1,X_2,X_3,\dots$ be a sequence of i.i.d.\ random variables such that $X_1\sim \mathrm{APD}_{\lambda}(\bb{\theta}_0,\bb{\kappa})$, where $\lambda = 1$, $\mu\in \R$ and $\sigma > 0$.
        In particular, the density function of $X_1$ is given by
        \begin{equation}
            f_{X_1}(x) \leqdef \frac{1}{2 \sigma} e^{-\left|\frac{x - \mu}{\sigma}\right|}, \quad x\in \R.
        \end{equation}
        Define $H : \R\backslash\{0\} \to \R$ by
        \begin{equation}
             H(y) \leqdef \mathrm{sign}(y) (\log|y| + 1).
        \end{equation}
        Let $\{\hat{\mu}_{1}\}_{n\in \N}$ and $\{\hat{\sigma}_{1}\}_{n\in \N}$ be the sequences of maximum likelihood estimators found in Proposition~2.3 for $\lambda = 1$:
        \begin{equation}\label{eq.MLEs.laplace.example}
            \hat{\mu}_{1} \leqdef \mathrm{median}(X_1,X_2,\dots,X_n) \quad \text{and} \quad \hat{\sigma}_{1} = \frac{1}{n} \sum_{i=1}^n  |X_i - \hat{\mu}_{1}|.
        \end{equation}
        The median is defined in Remark~2.4.
        For $v\in [0,1]$, let $\mu_{n,v}^{\star} \leqdef \mu + v(\hat{\mu}_{1} - \mu)$ and $\sigma_{n,v}^{\star} \leqdef \sigma + v(\hat{\sigma}_{1} - \sigma)$.
        Then,
        \begin{equation}\label{eq:lem:example.1.conclusion}
           \lim_{n\to\infty} \sup_{v\in [0,1]} \EE\bigg|\frac{1}{n} \sum_{i=1}^n \ind{1}_{\{X_i \neq \mu_{n,v}^{\star}\}} H\Big(\frac{X_i - \mu_{n,v}^{\star}}{\sigma_{n,v}^{\star}}\Big) - \EE\bigg[H\Big(\frac{X_1 - \mu}{\sigma}\Big)\bigg]\bigg| = 0.
        \end{equation}
    \end{lemma}

    \begin{proof}[\bf Proof of Lemma~\ref{lem:example.1}]
        Without loss of generality, assume that $\mu = 0$.
        Since $\sigma > 0$ and $\hat{\sigma}_{1} > 0$ a.s., we have $\sigma_{n,v}^{\star} > 0$ a.s. for any $v\in [0,1]$, which implies that the factors $\sigma_{n,v}^{\star}$ and $\sigma$ in the $\mathrm{sign}$ function of $H$ can be ignored. Also, $f_{X_1}$ is symmetric, so $\EE[\mathrm{sign}(X_1)] = 0$. Combining these facts together, the supremum in \eqref{eq:lem:example.1.conclusion} is bounded from above by
        \begin{equation}\label{eq:lem:example.1.c.plus.d}
            \begin{aligned}
                \hspace{-2mm}(c) + (d)
                &\leqdef \sup_{v\in [0,1]} \EE\bigg|\frac{1}{n} \sum_{i=1}^n \ind{1}_{\{X_i \neq \mu_{n,v}^{\star}\}} h(X_i - \mu_{n,v}^{\star}) - \EE\big[h(X_1)\big]\bigg| \\
                &\quad+ \sup_{v\in [0,1]} \EE\bigg|\big(1 - \log \sigma_{n,v}^{\star}\big) \cdot \frac{1}{n} \sum_{i=1}^n \ind{1}_{\{X_i \neq \mu_{n,v}^{\star}\}} \mathrm{sign}(X_i - \mu_{n,v}^{\star})\bigg|,
            \end{aligned}
        \end{equation}
        where $h(y) \leqdef \mathrm{sign}(y) \log|y|$.
        By Lemma~3.1 in \cite{MR3842623}, we have $(c)\to 0$.

        It remains to prove that $(d) \to 0$ in \eqref{eq:lem:example.1.c.plus.d}.
        By the Cauchy-Schwarz inequality,
        \begin{equation}
            \begin{aligned}
                (d)^2
                &\leq \EE\bigg[\sup_{v\in [0,1]} \big(1 - \log \sigma_{n,v}^{\star}\big)^2\bigg] \cdot \EE\bigg[\sup_{v\in [0,1]} \Big(\frac{1}{n} \sum_{i=1}^n \ind{1}_{\{X_i \neq \mu_{n,v}^{\star}\}} \mathrm{sign}(X_i - \mu_{n,v}^{\star})\Big)^2\bigg] \\
                &\reqdef (d.1) \cdot (d.2).
            \end{aligned}
        \end{equation}
        Below, we show that $(d.1)$ is bounded and $(d.2)$ tends to zero as $n\to \infty$.
        We start with $(d.2)$.
        Almost surely in $\omega\in \Omega$, the function
        \begin{equation}
            v\mapsto \frac{1}{n} \sum_{i=1}^n \ind{1}_{\{X_i(\omega) \neq \mu_{n,v}^{\star}(\omega)\}} \mathrm{sign}(X_i(\omega) - \mu_{n,v}^{\star}(\omega))
        \end{equation}
        is monotone and equal to zero at $v = 1$ (by definition of $\hat{\mu}_{1}$, recall \eqref{eq:MLE.system.mu.critical.points}).
        Therefore, almost surely in $\omega\in \Omega$, the supremum of the square in $(d.2)$ is always attained at $v = 0$.
        We deduce that
        \begin{equation}\label{eq:prop:example.1.LLN.L2}
            (d.2) = \EE\bigg[\Big(\frac{1}{n} \sum_{i=1}^n \ind{1}_{\{X_i \neq 0\}} \mathrm{sign}(X_i)\Big)^2\bigg] \stackrel{n\to \infty}{\longrightarrow} \Big(\EE\left[\ind{1}_{\{X_1 \neq 0\}} \mathrm{sign}(X_1)\right]\Big)^2 = 0,
        \end{equation}
        by the strong law of large numbers and the bounded convergence theorem.

        Now we show that $(d.1)$ is bounded.
        By successively using the inequality $(\alpha - \beta)^2 \leq 2\alpha^2 + 2\beta^2$, the fact that $z\mapsto (\log z)^2$ always maximizes at one of the two end points on any closed sub-interval of $(0,\infty)$, and the inequality $\max\{a,b\} \leq a + b$ for $a,b \geq 0$, we have
        \begin{equation}
            (d.1)
            \leq \EE\bigg[\sup_{v\in [0,1]} 2 + 2\, (\log \sigma_{n,v}^{\star})^2\bigg]
            \leq 2 + 2\, (\log \sigma)^2 + 2\, \EE\big[(\log \hat{\sigma}_{1})^2\big].
        \end{equation}
        It remains to show that $\EE[(\log \hat{\sigma}_{1})^2] < \infty$.
        Since $\hat{\sigma}_{1}$ is a mean of integrable terms (see \eqref{eq.MLEs.laplace.example}), we expect, at least heuristically (because of large deviations), that, as $n\to\infty$, its density function concentrates more and more around $\sigma$ and decays exponentially faster and faster in the right tail.
        The specific form of the density function of $\hat{\sigma}_{1}$ is given in Equation~(32) of \cite{MR0153074} and confirms the intuition.
        For $N\in\N$ large enough (depending on $\sigma$), there exists $c_{\sigma} > 0$ small enough that, for all $n\geq N$,
        \begin{equation}
            \begin{aligned}
                 \EE\big[(\log \hat{\sigma}_{1})^2\big]
                 &= \int_{(0,\sigma/2) \cup (\sigma/2,(3\sigma/2) \vee 1) \cup ((3\sigma/2) \vee 1, \infty)} (\log s)^2 \cdot f_{\hat{\sigma}_{1}}(s) \rd s \\
                 &\leq \underbrace{\int_0^{\sigma/2} (\log(s))^2 \cdot 1 \, \rd s}_{< ~\infty} \, + \, M_{\sigma} \underbrace{\int_{\sigma/2}^{(3\sigma/2) \vee 1} f_{\hat{\sigma}_{1}}(s) \rd s}_{\leq ~1} \, + \, \underbrace{\int_{(3\sigma/2) \vee 1}^{\infty} s \cdot e^{-c_{\sigma} s} \rd s}_{< ~\infty} < \infty,
            \end{aligned}
        \end{equation}
        where $a \vee b \leqdef \max\{a,b\}$ and $M_{\sigma} \leqdef \max_{s\in [\sigma/2,(3\sigma/2) \vee 1]} (\log s)^2 < \infty$.
        This ends the proof.
    \end{proof}

    \begin{proof}[\bf Proof of Proposition~\ref{prop:MLE.score.asymptotics.explicit}]
        We work under $H_0$ throughout this proof.
        Let $X\sim \mathrm{APD}_{\lambda}(\bb{\theta}_0,\bb{\kappa})$, $Y\leqdef \sigma^{-1}(X - \mu)$ and $y\leqdef \sigma^{-1}(x - \mu)$.
        By the weak law of large numbers, the chain rule, integration by parts and $\frac{\partial y}{\partial \bb{\kappa}^{\top}}\leqdef \big(\frac{\partial y}{\partial \mu}, \frac{\partial y}{\partial \sigma}\big)=-\sigma^{-1}(1, y)$, we have
        \begin{equation}
            \begin{aligned}
                \bb{r}_n'(\bb{\kappa})&\hspace{-1mm}\stackrel{\phantom{\eqref{eq:asymptotic.normality.vector.d}}}{=}\frac{\partial}{\partial \bb{\kappa}^{\top}}\frac{1}{n} \sum_{i=1}^n \bb{d}_{\bb{\theta}}(Y_i)
                \stackrel{\phantom{\eqref{eq:asymptotic.normality.vector.d}}}{=} \EE\Big[\frac{\partial}{\partial \bb{\kappa}^{\top}} \bb{d}_{\bb{\theta}}(Y)\Big] + o_{\hspace{0.3mm}\PP}(1) \bb{1}_2^{\phantom{\top}}\hspace{-1mm}\bb{1}_2^{\top} \stackrel{\phantom{\eqref{eq:asymptotic.normality.vector.d}}}{=} \EE\Big[\bb{d}_{\bb{\theta}}'(Y) \frac{\partial Y}{\partial \bb{\kappa}^{\top}}\Big] + o_{\hspace{0.3mm}\PP}(1)\bb{1}_2^{\phantom{\top}}\hspace{-1mm}\bb{1}_2^{\top}\\
                &\hspace{-1mm}\stackrel{\phantom{\eqref{eq:asymptotic.normality.vector.d}}}{=} \Big.\Big[\bb{d}_{\bb{\theta}}(y) \frac{\partial y}{\partial \bb{\kappa}^{\top}} f(y \nvert \bb{\theta}_0,(0,1)^{\top})\Big]\Big|_{-\infty}^{\infty} \hspace{-1mm}- \int_{-\infty}^{\infty} \bb{d}_{\bb{\theta}}(y) \frac{\partial}{\partial y} \Big[\frac{\partial y}{\partial \bb{\kappa}^{\top}} f(y \nvert \bb{\theta}_0,(0,1)^{\top})\Big] \rd y + o_{\hspace{0.3mm}\PP}(1)\bb{1}_2^{\phantom{\top}}\hspace{-1mm}\bb{1}_2^{\top}\\
                &\hspace{-1mm}\stackrel{\phantom{\eqref{eq:asymptotic.normality.vector.d}}}{=} [0]+ \sigma^{-1}\int_{-\infty}^{\infty} \bb{d}_{\bb{\theta}}(y)  \left(\frac{\partial}{\partial y} f(y \nvert \bb{\theta}_0,(0,1)^{\top}),
                \frac{\partial}{\partial y} y f(y \nvert \bb{\theta}_0,(0,1)^{\top})\right) \rd y + o_{\hspace{0.3mm}\PP}(1)\bb{1}_2^{\phantom{\top}}\hspace{-1mm}\bb{1}_2^{\top}\\
                &\hspace{-1mm}\stackrel{\phantom{\eqref{eq:asymptotic.normality.vector.d}}}{=} \sigma^{-1}\int_{-\infty}^{\infty} \bb{d}_{\bb{\theta}}(y)  \left(\frac{\partial}{\partial y} \log f(y \nvert \bb{\theta}_0,(0,1)^{\top}),
                \Big(1 + y\frac{\partial}{\partial y}\log f(y \nvert \bb{\theta}_0,(0,1)^{\top})\Big)\right)f(y \nvert \bb{\theta}_0,(0,1)^{\top}) \rd y \\
                &\quad\stackrel{\phantom{\eqref{eq:asymptotic.normality.vector.d}}}{+} o_{\hspace{0.3mm}\PP}(1)\bb{1}_2^{\phantom{\top}}\hspace{-1mm}\bb{1}_2^{\top}\stackrel{\eqref{eq:d.kappa}}{=}\, \,  - \sigma^{-1}\EE\big[\bb{d}_{\bb{\theta}}(Y) \bb{d}_{\bb{\kappa}}(Y)^{\top}\big] + o_{\hspace{0.3mm}\PP}(1)\bb{1}_2^{\phantom{\top}}\hspace{-1mm}\bb{1}_2^{\top}
                \stackrel{\phantom{\eqref{eq:asymptotic.normality.vector.d}}}{=} -\sigma^{-1} J_{\bb{\theta}\bb{\kappa}} + o_{\hspace{0.3mm}\PP}(1)\bb{1}_2^{\phantom{\top}}\hspace{-1mm}\bb{1}_2^{\top}.
            \end{aligned}
        \end{equation}
        This proves \eqref{eq:prop:MLE.score.asymptotics.explicit.r.prime}.
        Now, we show the asymptotics of $n^{1/2} (\hat{\bb{\kappa}}_n - \bb{\kappa})$.
        By applying Theorem~5.23 in \cite{MR1652247} (we verify the technical conditions of the theorem below) with
        \begin{equation}
            m_{\bb{t}}(x) \leqdef \log f(x \nvert \bb{\theta}_0,\bb{t}), \quad \bb{t}\in \R \times (0,\infty), ~x\in \R,
        \end{equation}
        ($\hat{\bb{\kappa}}_n \in \mathrm{argmax}_{\bb{t}\in \R \times (0,\infty)} \frac{1}{n} \sum_{i=1}^n m_{\bb{t}}(X_i)$, recall Proposition~2.3
        and the fact that $\big.\tfrac{\partial}{\partial \bb{t}} m_{\bb{t}}(x)\big|_{\bb{t} = \bb{\kappa}} = \sigma^{-1} \bb{d}_{\bb{\kappa}}(y)$ by \eqref{eq:d.kappa}
        yields
        \begin{equation}\label{eq:van.der.vaart.thm.5.23.apply}
            \begin{aligned}
            n^{1/2} (\hat{\bb{\kappa}}_n - \bb{\kappa})
            &\stackrel{\phantom{\eqref{eq:asymptotic.normality.vector.d}}}{=} - \EE\big[\sigma^{-1} \bb{d}_{\bb{\kappa}}(Y) \sigma^{-1} \bb{d}_{\bb{\kappa}}(Y)^{\top}\big]^{-1}
            \frac{1}{\sqrt{n}} \sum_{i=1}^n \sigma^{-1} \bb{d}_{\bb{\kappa}}(Y_i) + o_{\hspace{0.3mm}\PP}(1) \bb{1}_2 \\
            &\stackrel{\phantom{\eqref{eq:asymptotic.normality.vector.d}}}{=} \sigma J_{\bb{\kappa}\bb{\kappa}}^{-1} \frac{1}{\sqrt{n}} \sum_{i=1}^n \bb{d}_{\bb{\kappa}}(Y_i) + o_{\hspace{0.3mm}\PP}(1) \bb{1}_2.
            \end{aligned}
        \end{equation}
        This proves \eqref{eq:prop:MLE.score.asymptotics.explicit.MLE}.
        Finally, since $\frac{1}{\sqrt{n}} \sum_{i=1}^n \bb{d}_{\bb{\kappa}}(Y_i)$ is $O_{\PP}(1)$ by Proposition~\ref{prop:asymptotic.normality.vector.d}, Equation \eqref{eq:prop:MLE.score.asymptotics.explicit.score} follows directly from Proposition~\ref{prop:MLE.score.asymptotics.Taylor}, \eqref{eq:prop:MLE.score.asymptotics.explicit.r.prime} and \eqref{eq:prop:MLE.score.asymptotics.explicit.MLE}.

        For the convenience of the reader, we verify below the $6$ conditions of Theorem~5.23 in \cite{MR1652247}, which allowed us to write the first equality in \eqref{eq:van.der.vaart.thm.5.23.apply}:
        \begin{enumerate}
            \item For all $\bb{t}\in \R \times (0,\infty)$, the function $x\mapsto m_{\bb{t}}(x)$ is measurable (this is obvious).
            \item For all $x \neq \mu$ (and thus for almost-all $x$ under the measure on $\R$ induced by the distribution of $X$), the function $\bb{t}\mapsto m_{\bb{t}}(x)$ is differentiable at $\bb{t} = \bb{\kappa}$, and the derivative at that point is
                \begin{equation}
                    \big.\tfrac{\partial}{\partial \bb{t}} m_{\bb{t}}(x)\big|_{\bb{t} = \bb{\kappa}} = \sigma^{-1} \bb{d}_{\bb{\kappa}}(y) = \sigma^{-1} (|y|^{\lambda - 1} \mathrm{sign}(y), |y|^{\lambda} - 1)^{\top}, \quad \text{by \eqref{eq:d.kappa} and \eqref{eq:vector.d}}.
                \end{equation}
            \item The function $\dot m(x) \leqdef \sup_{\bb{t}\in [\mu - 1,\mu + 1] \times [\sigma/2,2\sigma]} \sigma^{-1} (2|y|^{\lambda} + 1)$ is measurable, and satisfies $\EE[\dot m (X)^2] < \infty$ (this is easy to verify because the supremum is attained) and also
                \begin{equation}
                    |m_{\bb{t}_1}(x) - m_{\bb{t}_2}(x)| \leq \dot m(x) \, \|\bb{t}_1 - \bb{t}_2\|_1, \quad \text{for all } x\in \R, ~ \bb{t}\in [\mu - 1,\mu + 1] \times [\sigma/2,2\sigma].
                \end{equation}
                This last equation is just a consequence of the mean value theorem and the fact that, for all $x\in \R$, the function $\bb{t}\mapsto \tfrac{\partial}{\partial \bb{t}} m_{\bb{t}}(x)$ is uniformly continuous on the compact set $[\mu - 1,\mu + 1] \times [\sigma/2,2\sigma]$.
            \item The map $\bb{t}\mapsto \EE[m_{\bb{t}}(X)]$ admits the following second order Taylor expansion at $\bb{t} = \bb{\kappa}$:
                \begin{equation}
                    \EE[m_{\bb{t}}(X)] = \EE[m_{\bb{\kappa}}(X)] + \underbrace{\EE[\sigma^{-1} \bb{d}_{\bb{\kappa}}(Y)^{\top}]}_{=~(0,0)^{\top}} \, (\bb{t} - \bb{\kappa}) + \frac{1}{2} (\bb{t} - \bb{\kappa})^{\top} V_{\bb{\kappa}} \, (\bb{t} - \bb{\kappa}) + o_{\bb{\kappa}}\big(\|\bb{t} - \bb{\kappa}\|_1^2\big),
                \end{equation}
                where the matrix $V_{\bb{\kappa}} \leqdef - \EE[\sigma^{-1} \bb{d}_{\bb{\kappa}}(Y) \sigma^{-1} \bb{d}_{\bb{\kappa}}(Y)^{\top}]$ is finite, non-singular, symmetric (and even diagonal) by the proof of Proposition~\ref{prop:asymptotic.normality.vector.d}.
                Indeed, whenever $\lambda > 1$, the expansion easily holds true because integration by parts shows that $V_{\bb{\kappa}} = \EE\big[\big.\tfrac{\partial^2}{\partial \bb{t} \partial \bb{t}^{\top}} m_{\bb{t}}(X)\big|_{\bb{t} = \bb{\kappa}}\big]$ and $\bb{t}\mapsto \EE\big[\tfrac{\partial^2}{\partial \bb{t} \partial \bb{t}^{\top}} m_{\bb{t}}(X)\big]$ is uniformly continuous on $[\mu - 1,\mu + 1] \times [\sigma/2,2\sigma]$. The only nontrivial case to verify is $\lambda = 1$.
                In that case, we have $m_{\bb{t}}(x) = - \big|\frac{x - t_1}{t_2}\big| - \log(2t_2)$, $V_{\bb{\kappa}} = - \sigma^{-2} I_2$, and therefore
                \begin{align*}
                    &\EE[m_{\bb{t}}(X)] - \EE[m_{\bb{\kappa}}(X)] = \EE\bigg[-\Big|\frac{X - t_1}{t_2}\Big| - \log(2t_2)\bigg] - \EE\big[- |Y| - \log(2\sigma)\big] \\
                    &\quad= \EE\bigg[|Y| - \frac{\sigma}{t_2} \cdot \Big|Y + \frac{\mu - t_1}{\sigma}\Big|\bigg] - \log\Big(1 + \frac{t_2 - \sigma}{\sigma}\Big) \\
                    &\quad= \frac{t_2 - \sigma}{t_2} \int_{-\infty}^{\infty} |y| \frac{e^{-|y|}}{2} \rd y + \frac{\sigma}{t_2} \int_{-\infty}^{\infty} \Big(|y| - \big|y - \tfrac{t_1 - \mu}{\sigma}\big|\Big) \frac{e^{-|y|}}{2} \rd y - \log\Big(1 + \frac{t_2 - \sigma}{\sigma}\Big) \\
                    &\quad= \frac{t_2 - \sigma}{t_2} \cdot 1 + \frac{\sigma}{t_2} \bigg\{1 - \big|\tfrac{t_1 - \mu}{\sigma}\big| - \exp\Big(-\big|\tfrac{t_1 - \mu}{\sigma}\big|\Big)\bigg\} - \Big(\frac{t_2 - \sigma}{\sigma} - \frac{1}{2} \frac{(t_2 - \sigma)^2}{\sigma^2} + O_{\sigma}(|t_2 - \sigma|^3)\Big) \\
                    &\quad= \frac{\sigma}{t_2} \bigg\{-\frac{1}{2} \sigma^{-2} (t_1 - \mu)^2 + O_{\sigma}(|t_1 - \mu|^3)\bigg\} + \sigma^{-2} (t_2 - \sigma)^2 \Big(-\frac{\sigma}{t_2} + \frac{1}{2}\Big) + O_{\sigma}(|t_2 - \sigma|^3) \\
                    &\quad= -\frac{1}{2} \sigma^{-2} (t_1 - \mu)^2 - \frac{1}{2} \sigma^{-2} (t_2 - \sigma)^2 + O_{\sigma}\big(\|\bb{t} - \bb{\kappa}\|_1^3\big) = \frac{1}{2} (\bb{t} - \bb{\kappa})^{\top} V_{\bb{\kappa}} \, (\bb{t} - \bb{\kappa}) + o_{\bb{\kappa}}\big(\|\bb{t} - \bb{\kappa}\|_1^2\big),
                \end{align*}
                where the second integral on the third line was computed using \texttt{Wolfram Mathematica}.
            \item $\EE[m_{\hat{\bb{\kappa}}_n}(X)] = \EE[\sup_{\bb{t}\in \R \times (0,\infty)} m_{\bb{t}}(X)] \geq \sup_{\bb{t}\in \R \times (0,\infty)} \EE[m_{\bb{t}}(X)] - o_{\hspace{0.3mm}\PP}(n^{-1})$ is trivially satisfied.
            \item $\hat{\bb{\kappa}}_n \xrightarrow{\PP} \bb{\kappa}$ since $\hat{\bb{\kappa}}_n \rightarrow \bb{\kappa}$ a.s.\ by Lemma~\ref{lem:convergence.almost.sure.MLE.estimators}.
        \end{enumerate}
        This ends the proof.
    \end{proof}

\subsection{Proof of Proposition~3.6}\label{Proof_ZKnetstar}

    By applying a uniform law of large numbers, we can show the following preliminary result.

    \begin{lemma}\label{lem:K.B.a.s.convergence}
        Under $H_0$ and for $\lambda\geq 1$, we have, as $n\to \infty$,
        \begin{equation}\label{eq:lem:K.B.a.s.convergence.eq.1}
            K_{\lambda}(\Xn) \xrightarrow{\mathrm{a.s.}} \frac{\lambda + \log \lambda + \psi(1/\lambda)}{\lambda}, \qquad S_{\lambda}(\Xn) \xrightarrow{\mathrm{a.s.}} 0,
        \end{equation}
        so that
        \begin{equation}\label{eq:lem:K.B.a.s.convergence.eq.2}
           K^{\mathrm{net}}_{\lambda}(\Xn) \xrightarrow{\mathrm{a.s.}} \frac{\lambda + \log \lambda + \psi(1/\lambda)}{\lambda} > 0.
        \end{equation}
    \end{lemma}

    \begin{proof}[\bf Proof of Lemma~\ref{lem:K.B.a.s.convergence}]
        For $v\in [0,1]$, $t_1\in \R$ and $t_2 > 0$, let $\bb{\kappa}_{n,v}^{\star} \leqdef \bb{\kappa} + v(\hat{\bb{\kappa}}_n - \bb{\kappa})$, $y \leqdef (x - t_1)/t_2$,
        \begin{equation}
            U_5(x,\bb{t}) \leqdef |y|^{\lambda} \log|y| \quad \text{and} \quad U_6(x,\bb{t}) \leqdef |y|^{\lambda} \mathrm{sign}(y), \quad \text{where } \bb{t} \leqdef (t_1,t_2)^{\top}.
        \end{equation}
        By Definition~2.6, note that
        \begin{equation}
            K_{\lambda}(\Xn) = \frac{1}{n} \sum_{i=1}^n U_5(X_i,\bb{\kappa}_{n,v}^{\star}) \quad \text{and} \quad S_{\lambda}(\Xn) = \frac{1}{n} \sum_{i=1}^n U_6(X_i,\bb{\kappa}_{n,v}^{\star}).
        \end{equation}
        By the triangle inequality and Lemma~\ref{lem:standard.ULLN}, we have, for all $(k,\lambda)\in \{5,6\} \times [1,\infty)$,
        \begin{equation}\label{eq:lem:K.B.a.s.convergence.proof.eq.1}
            \begin{aligned}
                &\PP\bigg(\limsup_{n\to\infty} \sup_{v\in [0,1]} \Big|\frac{1}{n} \sum_{i=1}^n (U_k(X_i,\bb{\kappa}_{n,v}^{\star}) - U_k(X_i,\bb{\kappa}))\Big| > 0\bigg) \\
                &\quad\leq 2 \, \PP\bigg(\limsup_{n\to\infty} \sup_{v\in [0,1]} \Big|\frac{1}{n} \sum_{i=1}^n U_k(X_i,\bb{\kappa}_{n,v}^{\star}) - \overline{U}_k(\bb{\kappa})\Big| > 0\bigg) = 0,
            \end{aligned}
        \end{equation}
        where, under $H_0$,
        \begin{equation}\label{eq:lem:K.B.a.s.convergence.proof.eq.2}
            \begin{aligned}
                &\overline{U}_5(\bb{k}) = \EE\left[\Big|\frac{X - \mu}{\sigma}\Big|^{\lambda} \log\Big|\frac{X - \mu}{\sigma}\Big|\right] = \frac{\psi(1 + 1/\lambda) + \log \lambda}{\lambda}, \\
                &\overline{U}_6(\bb{k}) = \EE\left[\Big|\frac{X - \mu}{\sigma}\Big|^{\lambda} \mathrm{sign}\Big(\frac{X - \mu}{\sigma}\Big)\right] = 0,
            \end{aligned}
        \end{equation}
        by \eqref{eq:expectation.ya.log} with $a = \lambda$, and by the symmetry of the density $f(\,\cdot\, |\,\bb{\theta}_0,(0,1)^{\top})$ with respect to $0$ and anti-symmetry of the function $z\mapsto |z|^{\lambda} \mathrm{sign}(z)$, respectively.
        This proves \eqref{eq:lem:K.B.a.s.convergence.eq.1}.
        The limit in \eqref{eq:lem:K.B.a.s.convergence.eq.2} follows by applying the limits from \eqref{eq:lem:K.B.a.s.convergence.eq.1} together in Definition~2.8 for $K^{\mathrm{net}}_{\lambda}(\Xn)$.
        To obtain the positivity on the right-hand side of \eqref{eq:lem:K.B.a.s.convergence.eq.2}, Lemma~2 in \cite{MR162751} shows that $\psi(x) - \log x > - 1/x$ for all $x > 1$, so that for all $\lambda \geq 1$,
        \begin{equation}
            \begin{aligned}
                \frac{\psi(1 + 1/\lambda) + \log \lambda}{\lambda}
                &= \frac{\log(\lambda + 1) + \psi(1 + 1/\lambda) - \log(1 + 1/\lambda)}{\lambda} \\
                &\geq \frac{\log(\lambda + 1) - \frac{1}{1 + 1/\lambda}}{\lambda} \geq \frac{\frac{1}{1/2 + 1/\lambda} - \frac{1}{1 + 1/\lambda}}{\lambda}> 0.
            \end{aligned}
        \end{equation}
        This ends the proof of Lemma~\ref{lem:K.B.a.s.convergence}.
    \end{proof}

    We are now ready to prove Proposition~3.6.
    Assume $H_0$ throughout, and denote
    \begin{equation}\label{eq:ES.EK.VS.VK}
        \begin{aligned}
            &E_S \leqdef 0, \qquad &&E_K\leqdef \frac{\lambda+\log \lambda + \psi(1/\lambda)}{\lambda}, \\
            &V_S\leqdef 1 + \lambda - \frac{\lambda^2}{\Gamma(2-1/\lambda)\Gamma(1/\lambda)}, \qquad &&V_K\leqdef \frac{(1+1/\lambda) \psi_1(1+1/\lambda)-1}{\lambda}.
        \end{aligned}
    \end{equation}
    Using Definition~3.1 and Theorem~3.3, we can write
    \begin{equation}
        \begin{aligned}
            &S_{\lambda}(\Xn) = E_S + n^{-1/2}W_{n,1}, \quad K_{\lambda}(\Xn) = E_K + n^{-1/2} W_{n,2} \quad \text{and} \\
            &K_{\lambda}^{\mathrm{net}}(\Xn) = \max\big\{0, E_K + n^{-1/2} W_{n,2} - (\lambda/2) n^{-1} W_{n,1}^2\big\},
        \end{aligned}
    \end{equation}
    where $W_{n,1}=V_S^{1/2}Z^{*\!}(S_{\lambda})\stackrel{\PP_{H_0}}{\rightsquigarrow} \mathcal{N}(0,V_S)$ and $W_{n,2}=V_K^{1/2}Z^{*\!}(K_{\lambda})\stackrel{\PP_{H_0}}{\rightsquigarrow} \mathcal{N}(0,V_K)$ as $n\to \infty$.
    Then, for any $\omega$ on the event
    \begin{equation}\label{eq:event.A.n.lambda}
        A_{n,\lambda}\leqdef \left\{\omega\in \Omega :
            \begin{array}{l}
                |W_{n,1}(\omega)| \leq n^{1/8}, n^{-1/8} < |W_{n,2}(\omega)| \leq n^{1/8}, ~\text{and} \\
                n^{-1/2} W_{n,2}(\omega) - (\lambda/2) n^{-1} (W_{n,1}(\omega))^2 > -\frac{1}{2} E_K
            \end{array}
            \right\},
    \end{equation}
    we have, as $n\to \infty$,
    \begin{align*}
        \frac{Z^{*\!}(K^{\mathrm{net}}_{\lambda})}{Z^{*\!}(K_{\lambda})} - 1
        &= \left[\frac{n^{1/2}\big((K^{\mathrm{net}}_{\lambda}(\Xn))^{1/4} - E_K^{1/4}\big)}{\big(\frac{1}{16}E_K^{-3/2}V_K\big)^{1/2}} - \frac{n^{1/2}\big(K_{\lambda}(\Xn) - E_K\big)}{V_K^{1/2}}\right] \cdot \frac{V_K^{1/2}}{n^{1/2}\big(K_{\lambda}(\Xn) - E_K\big)} \\
        &=4 E_K^{3/4}\cdot\frac{\big(E_K+n^{-1/2}W_{n,2}-(\lambda/2)n^{-1}W_{n,1}^2\big)^{1/4} - E_K^{1/4} - \frac{1}{4} E_K^{-3/4} n^{-1/2} W_{n,2}} {n^{-1/2} W_{n,2}}.
    \end{align*}
    By a second order Taylor expansion, we know that for all $x > -E_K$,
    \begin{equation}
        \left|(E_K + x)^{1/4} - E_K^{1/4} - \frac{1}{4} E_K^{-3/4} x\right| \leq \frac{|x|^2}{2} \sup_{y\in [E_K + (x \wedge 0), E_K + (x \vee 0)]} \frac{3}{16} y^{-7/4} \leq |x|^2 \frac{3}{32} (E_K + (x \wedge 0))^{-7/4},
    \end{equation}
    where $x \vee 0 \leqdef \max\{x,0\}$ and $x \wedge 0 \leqdef \min\{x,0\}$. Hence, for $x^* = n^{-1/2}W_{n,2}-(\lambda/2)n^{-1}W_{n,1}^2$ and for any $\omega\in A_{n,\lambda}$,
    \begin{equation}
        \left|\frac{Z^{*\!}(K^{\mathrm{net}}_{\lambda})}{Z^{*\!}(K_{\lambda})} - 1\right| \leq 4 E_K^{3/4}\cdot \frac{|x^*|^2 \frac{3}{32} (E_K + (x^* \wedge 0))^{-7/4} + \frac{1}{4} E_K^{-3/4} (\lambda/2) n^{-1} W_{n,1}^2}{n^{-1/2} |W_{n,2}|} \leq C_{\lambda} n^{-1/8},
    \end{equation}
    for some constant $C_{\lambda} > 0$ that depends only on $\lambda$.
    Since $\PP_{H_0}(A_{n,\lambda})\to 1$ as $n\to \infty$ by Theorem~3.3, we conclude that
    \begin{equation}
        \frac{Z^{*\!}(K^{\mathrm{net}}_{\lambda})}{Z^{*\!}(K_{\lambda})}\stackrel{\PP_{H_0}}{\longrightarrow} 1.
    \end{equation}
    The last part of Proposition~3.6 follows directly using Slutsky's theorem and Theorem~3.3.

    %\newpage
    We give here another proof of the asymptotic distribution of $(Z^*(S_{\lambda}) ~Z^*(K_{\lambda}^{\mathrm{net}}))^{\top}$ in the statement of Proposition~3.6.
        It suffices to prove that, as $n\to \infty$,
        \begin{equation}\label{eq:prop:law.net.kurtosis.to.prove.more.general}
            n^{1/2} \left[
            \begin{pmatrix}
                S_{\lambda}(\Xn) \\
                \big(K^{\mathrm{net}}_{\lambda}(\Xn)\big)^{1/4}
            \end{pmatrix}
            -
            \begin{pmatrix}
                E_S \\
                E_K^{1/4}
            \end{pmatrix}
            \right] \stackrel{\PP_{H_0}}{\scalebox{2}[1.2]{$\rightsquigarrow$}} \mathcal{N}_2
            \left(\bb{0},
                \begin{pmatrix}
                    V_S & 0 \\[1mm]
                    0 & \frac{1}{16} E_K^{-3/2} V_K
                \end{pmatrix}
            \right).
        \end{equation}
        Consider the vector-valued function
        \begin{equation}
            \bb{g}(s,k) =
            \begin{pmatrix}
                s \\
                (\max\{0,k - (\lambda/2) s^2\})^{1/4}
            \end{pmatrix}.
        \end{equation}
        The positivity of the right-hand side in \eqref{eq:lem:K.B.a.s.convergence.eq.2} implies that $\bb{g}$ is differentiable in a small open set that contains the point $(s,k) = (E_S,E_K)$.
        At that point, we have $\bb{g}(E_S,E_K) = (0, E_K^{1/4})$ and
        \begin{equation}
            \begin{pmatrix}
                \frac{\partial}{\partial s} \bb{g}(E_S,E_K)^{\top} \\[1mm]
                \frac{\partial}{\partial k} \bb{g}(E_S,E_K)^{\top}
            \end{pmatrix}
            =
            \begin{pmatrix}
                1 & \frac{-\lambda E_S}{4} \Big(E_K - \frac{\lambda}{2} E_S^2\Big)^{-3/4} \\
                0 & \frac{1}{4} \Big(E_K - \frac{\lambda}{2} E_S^2\Big)^{-3/4}
            \end{pmatrix}
            =
            \begin{pmatrix}
                1 & 0 \\
                0 & \frac{E_K^{-3/4}}{4}
            \end{pmatrix}.
        \end{equation}
        Now, by Equation~\eqref{eqn:law.B.K.H0} and the delta method, we get
        \begin{equation}
            n^{1/2} \left[\bb{g}(S_{\lambda}(\Xn),K_{\lambda}(\Xn)) -
                \begin{pmatrix}
                    0 \\
                    E_K^{1/4}
                \end{pmatrix}\right]
            \stackrel{\PP_{H_0}}{\scalebox{2}[1.2]{$\rightsquigarrow$}} \mathcal{N}_2
            \left(\bb{0},
                \begin{pmatrix}
                    1 & 0 \\
                    0 & \frac{E_K^{-3/4}}{4}
                \end{pmatrix}
                \begin{pmatrix}
                    V_S & 0 \\[1mm]
                    0 & V_K
                \end{pmatrix}
                \begin{pmatrix}
                    1 & 0 \\
                    0 & \frac{E_K^{-3/4}}{4}
                \end{pmatrix}
            \right),
        \end{equation}
        which is exactly the statement \eqref{eq:prop:law.net.kurtosis.to.prove.more.general}.

\subsection{Proof of Proposition~3.8}\label{Proof_ZKnet}

    Assume $H_0$ throughout.
    We have
    \begin{equation}
        \frac{Z(S_{\lambda})}{Z^{*\!}(S_{\lambda})} \stackrel{\mathrm{a.s.}}{=} \Big(1 + \frac{\cone}{n^{\hspace{-0.3mm}\aone}}\Big)^{-1/2} \stackrel{n\to \infty}{\longrightarrow} 1,
    \end{equation}
    since $\aone>0$ for all values of $\lambda$ in Table~3.1. Furthermore, we have
    \begin{equation}
      Z(K^{\mathrm{net}}_{\lambda})= \frac{Z^{*\!}(K^{\mathrm{net}}_{\lambda})}{\left(1 +
        \frac{\cthree}{n^{\hspace{-0.3mm}\athree}}+
        \frac{\cfour}{n^{\hspace{-0.3mm}\afour}}\right)^{1/2}}
     + \frac{n^{1/2}\left(\frac{\lambda+\log \lambda +
        \psi(1/\lambda)}{\lambda}\right)^{1/4}\big(1-(1+\frac{\ctwo}{n^{\hspace{-0.3mm}\atwo}})\big)}{\Big[\frac{1}{16}\left(\frac{\lambda+\log
        \lambda + \psi(1/\lambda)}{\lambda}\right)^{-3/2} \frac{(1+1/\lambda) \psi_1(1+1/\lambda)-1}{\lambda}\left(1 +
        \frac{\cthree}{n^{\hspace{-0.3mm}\athree}}+
        \frac{\cfour}{n^{\hspace{-0.3mm}\afour}}\right)\Big]^{1/2}}.
    \end{equation}
    Since $\athree>0$ and $\afour>0$, and since $\atwo \ge 0.86$ and $|\ctwo|<1$ for all values of $\lambda$ in Table~3.1, we have
    \begin{equation}
        1 +  \frac{\cthree}{n^{\hspace{-0.3mm}\athree}}+
        \frac{\cfour}{n^{\hspace{-0.3mm}\afour}} \stackrel{n\to \infty}{\longrightarrow} 1 \quad \text{ and } \quad       n^{1/2}\Big|1-\big(1+\frac{\ctwo}{n^{\hspace{-0.3mm}\atwo}}\big)\Big|=|\ctwo|\cdot n^{1/2-\hspace{-0.3mm}\atwo}<n^{-0.36} \stackrel{n\to \infty}{\longrightarrow} 0.
    \end{equation}
    Also, note that $Z^{*\!}(K^{\mathrm{net}}_{\lambda})\stackrel{\mathcal{D}}{\longrightarrow} \mathcal{N}(0,1)$ (see Proposition~3.6) and thus $n^{-0.36}/Z^{*\!}(K^{\mathrm{net}}_{\lambda})\stackrel{\PP_{H_0}}{\longrightarrow} 0$ by Slutsky's theorem.
    Therefore, we have
    \begin{equation}
        \frac{Z(K^{\mathrm{net}}_{\lambda})}{Z^{*\!}(K^{\mathrm{net}}_{\lambda})}\stackrel{\PP_{H_0}}{\longrightarrow}1, \quad \text{as } n\rightarrow\infty.
    \end{equation}
    Finally, the last convergence in distribution results follow directly from Proposition~3.6 and Slutsky's theorem.

\subsection{Proof of Theorem~3.10}\label{sec:proof.asymp.dist.H1}

    The following proposition will be a crucial tool to prove the weak convergence of our modified score statistic under local alternatives.
    It is a consequence of the concept of {\it contiguity}, see, e.g., Section 6.2 in \cite{MR1652247}.

    \begin{proposition}\label{prop:iff.convergence.H0.H1n}
        For any statistics $\bb{T}_n \leqdef \bb{T}_n(X_1,X_2,\dots,X_n;\bb{\kappa})$ taking values in $\R^d$,
        \begin{equation}\label{eq:prop:iff.convergence.H0.H1n.eq}
            \bb{T}_n \xrightarrow{\PP_{H_0}} 0 \quad \text{if and only if} \quad \bb{T}_n \xrightarrow{\PP_{H_{\hspace{-0.2mm}1\hspace{-0.1mm},\hspace{-0.2mm}n}}} 0,
        \end{equation}
        as $n\to \infty$.
    \end{proposition}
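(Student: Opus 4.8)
The assertion is the standard ``soft'' consequence of the \emph{mutual contiguity} of the null and local-alternative laws, so the plan is to establish that contiguity first and then read off the equivalence. Denote also by $\PP_{H_0}$ and $\PP_{H_{1,n}}$ the joint laws of $(X_1,\dots,X_n)$ under the two hypotheses; since $\bb{\kappa}$ is common to both, these are the $n$-fold products of $g(\cdot\nvert\bb{\theta}_0,\bb{\kappa})$ and of $g(\cdot\nvert\bb{\theta}_n,\bb{\kappa})$, the latter being well defined for $n$ large because $\bb{\theta}_n\to\bb{\theta}_0$ lies eventually in the interior $(0,1)\times(0,\infty)$. Once the mutual contiguity $\PP_{H_0}\triangleleft\!\triangleright\PP_{H_{1,n}}$ is known, the proposition is immediate: fix $\varepsilon>0$ and apply the definition of contiguity to the events $A_n\circeq\{\|\bb{T}_n\|_2>\varepsilon\}$, obtaining $\PP_{H_0}(A_n)\to0\iff\PP_{H_{1,n}}(A_n)\to0$; as $\varepsilon>0$ is arbitrary, this is precisely \eqref{eq:prop:iff.convergence.H0.H1n.eq}.

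To obtain the mutual contiguity I will invoke Le Cam's first lemma (see Section 6.2 of \cite{MR1652247}): it suffices to show that, under $H_0$, the log-likelihood ratio
\[
\Lambda_n \circeq \sum_{i=1}^n \log\frac{g(X_i\nvert\bb{\theta}_n,\bb{\kappa})}{g(X_i\nvert\bb{\theta}_0,\bb{\kappa})}
\]
converges in distribution to a $\mathcal{N}(-\tfrac12\tau^2,\tau^2)$ law with $\tau^2\circeq\bb{\delta}^{\top}J_{\bb{\theta}\bb{\theta}}\bb{\delta}\in(0,\infty)$ (its limit is then a log-normal law of mean one, so no mass escapes, which is what upgrades the one-sided statement to two-sided contiguity). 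For this I will produce the LAN expansion
\[
\Lambda_n \;=\; \frac{1}{\sqrt n}\sum_{i=1}^n \bb{\delta}^{\top}\bb{d}_{\bb{\theta}}(Y_i)\;-\;\tfrac12\,\bb{\delta}^{\top}J_{\bb{\theta}\bb{\theta}}\,\bb{\delta}\;+\;o_{\PP_{H_0}}(1),\qquad Y_i\circeq\sigma^{-1}(X_i-\mu).
\]
Expanding $\bb{\theta}\mapsto\sum_{i}\log g(X_i\nvert\bb{\theta},\bb{\kappa})$ to second order about $\bb{\theta}_0$ and using $\bb{\theta}_n-\bb{\theta}_0=\bb{\delta}\,n^{-1/2}(1+o(1))$: the first-order term equals $(1+o(1))\,\bb{\delta}^{\top}n^{-1/2}\sum_i\bb{d}_{\bb{\theta}}(Y_i)$ by \eqref{eq:d.theta}, which is $n^{-1/2}\sum_i\bb{\delta}^{\top}\bb{d}_{\bb{\theta}}(Y_i)+o_{\PP_{H_0}}(1)$ because that sum is $O_{\PP_{H_0}}(1)$ by Proposition \ref{prop:asymptotic.normality.vector.d}; the second-order term equals $\tfrac12\bb{\delta}^{\top}\big(n^{-1}\sum_i\nabla^2_{\bb{\theta}}\log g(X_i\nvert\tilde{\bb{\theta}}_n,\bb{\kappa})\big)\bb{\delta}\,(1+o(1))$ for some $\tilde{\bb{\theta}}_n$ on the segment from $\bb{\theta}_0$ to $\bb{\theta}_n$, and Lemma \ref{lem:standard.ULLN}, applied entrywise with $\hat{\bb{\xi}}_n=\bb{\theta}_n\to\bb{\theta}_0$, together with the information identity $\EE_{H_0}[\nabla^2_{\bb{\theta}}\log g(X_1\nvert\bb{\theta}_0,\bb{\kappa})]=-J_{\bb{\theta}\bb{\theta}}$, forces it to $-\tfrac12\bb{\delta}^{\top}J_{\bb{\theta}\bb{\theta}}\bb{\delta}$. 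Finally, the central limit theorem applied to the scalar $n^{-1/2}\sum_i\bb{\delta}^{\top}\bb{d}_{\bb{\theta}}(Y_i)$ (mean $0$, variance $\tau^2$) yields the required Gaussian limit, and Le Cam's lemma then delivers the mutual contiguity.

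The main obstacle lies entirely in the second-order step, that is, in justifying the Taylor expansion and the uniform convergence of the Hessian. One must check that $\bb{\theta}\mapsto\log g(x\nvert\bb{\theta},\bb{\kappa})$ is twice continuously differentiable on a neighbourhood of $\bb{\theta}_0=(1/2,\lambda)^{\top}$ for every $x$ --- which holds in spite of the kink of $g$ at $x=\mu$, since $\delta_{\bb{\theta}}/A_{\bb{\theta}}(y)$ reduces to $2\theta_1^{\theta_2}/(\theta_1^{\theta_2}+(1-\theta_1)^{\theta_2})$ or to $2(1-\theta_1)^{\theta_2}/(\theta_1^{\theta_2}+(1-\theta_1)^{\theta_2})$ according to $\text{sign}(y)$, both smooth in $\bb{\theta}$ on the interior --- and that the entries of $\nabla^2_{\bb{\theta}}\log g(x\nvert\bb{\theta},\bb{\kappa})$ are dominated, uniformly for $\bb{\theta}$ in that neighbourhood, by a single $\PP_{H_0}$-integrable function of $x$; the bounds involved are of the type $|y|^{\theta_2\pm\epsilon}(\log|y|)^2$, which are integrable against the APD because of its exponential tails and its harmless behaviour near the origin. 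These are exactly hypotheses (A.1)--(A.2) of Lemma \ref{lem:standard.ULLN}, and they also legitimize differentiating twice under the integral sign to get the information identity. One may instead bypass the explicit Taylor bookkeeping by verifying differentiability in quadratic mean of $\bb{\theta}\mapsto\sqrt{g(\cdot\nvert\bb{\theta},\bb{\kappa})}$ at $\bb{\theta}_0$, with score $\bb{d}_{\bb{\theta}}(Y)$ and nonsingular Fisher information $J_{\bb{\theta}\bb{\theta}}$ (both already supplied by Proposition \ref{prop:asymptotic.normality.vector.d}), and then quote the general LAN theorem; either route makes the required integrability routine, and, unlike in Proposition \ref{prop:MLE.score.asymptotics.Taylor}, nothing special happens in the Laplace case $\lambda=1$ here.
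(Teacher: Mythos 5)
Your proposal is correct and follows essentially the same route as the paper: a second-order Taylor expansion of the log-likelihood ratio, the CLT from Proposition \ref{prop:asymptotic.normality.vector.d} for the linear term, Lemma \ref{lem:standard.ULLN} plus the information identity for the Hessian term, and then items (ii)--(iv) of Le Cam's first lemma to get mutual contiguity and the stated equivalence. The only cosmetic difference is that the paper writes the remainder in integral form over the segment $\bb{t}_{n,u,v}$ rather than with a mean-value point $\tilde{\bb{\theta}}_n$, which makes the uniform-law step slightly cleaner to state.
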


    As an immediate consequence, we obtain the same decomposition under $H_{\hspace{-0.2mm}1\hspace{-0.1mm},\hspace{-0.2mm}n}$ that we found for the modified score statistic under $H_0$ in Proposition~\ref{prop:MLE.score.asymptotics.explicit}.

    \begin{corollary}\label{cor:score.asymptotics.explicit.score.H1}
        Let $\bb{\delta}\in \R^2 \backslash \{\bb{0}\}$. Then, as $n\to \infty$,
        \begin{equation}\label{eq:cor:score.asymptotics.explicit.score.H1}
            n^{1/2} \bb{r}_n(\hat{\bb{\kappa}}_n)
            =
            \left(I_2 \, ; \, - J_{\bb{\theta}\bb{\kappa}} J_{\bb{\kappa}\bb{\kappa}}^{-1}\right) \frac{1}{\sqrt{n}} \sum_{i=1}^n
            \begin{pmatrix}
                \bb{d}_{\bb{\theta}}(Y_i) \\[1mm]
                \bb{d}_{\bb{\kappa}}(Y_i)
            \end{pmatrix}
            + o_{\hspace{0.3mm}\PP_{H_{\hspace{-0.2mm}1\hspace{-0.1mm},\hspace{-0.2mm}n}}}\hspace{-1mm}(1)\bb{1}_2.
        \end{equation}
    \end{corollary}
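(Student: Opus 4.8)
The plan is to obtain \eqref{eq:cor:score.asymptotics.explicit.score.H1} as a direct translation, via contiguity, of the decomposition already established under $H_0$ in \eqref{eq:prop:MLE.score.asymptotics.explicit.score}. First I would introduce the $\R^2$-valued statistic
\begin{equation*}
    \bb{T}_n \circeq n^{1/2} \bb{r}_n(\hat{\bb{\kappa}}_n) - \left(I_2 \, ; \, - J_{\bb{\theta}\bb{\kappa}} J_{\bb{\kappa}\bb{\kappa}}^{-1}\right) \frac{1}{\sqrt{n}} \sum_{i=1}^n
    \left(\hspace{-1.5mm}
        \begin{array}{c}
            \bb{d}_{\bb{\theta}}(Y_i) \\[1mm]
            \bb{d}_{\bb{\kappa}}(Y_i)
        \end{array}
    \hspace{-1.5mm}\right),
\end{equation*}
and check that it is of the form $\bb{T}_n = \bb{T}_n(X_1,X_2,\dots,X_n;\bb{\kappa})$ required to invoke Proposition \ref{prop:iff.convergence.H0.H1n}: indeed $\hat{\bb{\kappa}}_n$ (hence $\bb{r}_n(\hat{\bb{\kappa}}_n)$ and the $Z_i$) is a measurable function of $(X_1,\dots,X_n)$ alone by \eqref{eq:MLE.estimators} and \eqref{eq:score.statistic.unknown.2}, each $Y_i = \sigma^{-1}(X_i - \mu)$ is a function of $X_i$ and $\bb{\kappa}$, and $J_{\bb{\theta}\bb{\kappa}}$, $J_{\bb{\kappa}\bb{\kappa}}$ together with the maps $\bb{d}_{\bb{\theta}}$, $\bb{d}_{\bb{\kappa}}$ are deterministic.

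Next, Proposition \ref{prop:MLE.score.asymptotics.explicit} \--- specifically \eqref{eq:prop:MLE.score.asymptotics.explicit.score} \--- asserts precisely that $\bb{T}_n = o_{\PP_{H_0}}(1)\bb{1}_2$, i.e.\ $\bb{T}_n \stackrel{\PP_{H_0}}{\longrightarrow} 0$ as $n\to\infty$. Applying Proposition \ref{prop:iff.convergence.H0.H1n} with $d = 2$ then gives $\bb{T}_n \stackrel{\PP_{H_{1,n}}}{\longrightarrow} 0$, that is $\bb{T}_n = o_{\PP_{H_{1,n}}}(1)\bb{1}_2$. Rearranging the definition of $\bb{T}_n$ yields \eqref{eq:cor:score.asymptotics.explicit.score.H1}, which concludes the argument.

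There is essentially no obstacle beyond recording these observations; the substantive work is carried out in Propositions \ref{prop:MLE.score.asymptotics.explicit} and \ref{prop:iff.convergence.H0.H1n}. The only point deserving a line of care is the bookkeeping that $\bb{T}_n$ depends on the sample and on the (known form of the) nuisance parameter $\bb{\kappa}$ only \--- and not on the unknown $\bb{\theta}$ \--- so that the contiguity equivalence of Proposition \ref{prop:iff.convergence.H0.H1n} applies verbatim to it.
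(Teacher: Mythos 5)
Your argument is correct and coincides with the paper's own (implicit) proof: the corollary is obtained exactly by applying the contiguity equivalence of Proposition \ref{prop:iff.convergence.H0.H1n} to the remainder term from \eqref{eq:prop:MLE.score.asymptotics.explicit.score}, which is $o_{\PP_{H_0}}(1)$ by Proposition \ref{prop:MLE.score.asymptotics.explicit}. Your added remark that the statistic $\bb{T}_n$ depends only on the sample and on $\bb{\kappa}$ (not on $\bb{\theta}$) is a sensible bookkeeping check that the paper leaves unstated.
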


    We now use Le Cam's third lemma to prove the analogue of Proposition~\ref{prop:asymptotic.normality.vector.d} under $H_{\hspace{-0.2mm}1\hspace{-0.1mm},\hspace{-0.2mm}n}$.
    Our aim is to obtain the asymptotic distribution of the right-hand side of \eqref{eq:cor:score.asymptotics.explicit.score.H1}.

    \begin{proposition}\label{prop:asymptotic.normality.vector.d.H1}
        Let $\bb{\delta}\in \R^2 \backslash \{\bb{0}\}$. Then, as $n\to\infty$,
        \begin{equation}\label{eq:asymptotic.normality.vector.d.H1}
            \frac{1}{\sqrt{n}} \sum_{i=1}^n
            \begin{pmatrix}
                \bb{d}_{\bb{\theta}}(Y_i) \\[1mm]
                \bb{d}_{\bb{\kappa}}(Y_i)
            \end{pmatrix}
            \stackrel{\PP_{H_{\hspace{-0.2mm}1\hspace{-0.1mm},\hspace{-0.2mm}n}}}{\scalebox{2}[1.2]{$\rightsquigarrow$}}
            \mathcal{N}_4\left(
            \begin{pmatrix}
                J_{\bb{\theta}\bb{\theta}} \bb{\delta} \\[2mm]
                J_{\bb{\theta}\bb{\kappa}}^{\top} \bb{\delta}
            \end{pmatrix}
            ,
            J \leqdef
            \begin{pmatrix}
                J_{\bb{\theta}\bb{\theta}} &J_{\bb{\theta}\bb{\kappa}} \\[2mm]
                J_{\bb{\theta}\bb{\kappa}}^{\top} &J_{\bb{\kappa}\bb{\kappa}}
            \end{pmatrix}
            \right),
        \end{equation}
        where $J$ is given in detail in Proposition~\ref{prop:asymptotic.normality.vector.d}.
    \end{proposition}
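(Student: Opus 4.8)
The plan is to apply Le Cam's third lemma, for which we need two ingredients: a local asymptotic normality (LAN) expansion of the log-likelihood ratio between $H_{1,n}$ and $H_0$, and the joint weak convergence under $H_0$ of the score vector on the left-hand side of \eqref{eq:asymptotic.normality.vector.d} together with that log-likelihood ratio.

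First I would write $\bb{W}_n \circeq \frac{1}{\sqrt{n}} \sum_{i=1}^n \bigl(\bb{d}_{\bb{\theta}}(Y_i)^{\top}, \bb{d}_{\bb{\kappa}}(Y_i)^{\top}\bigr)^{\top}$ for that $4$-dimensional vector, and introduce the log-likelihood ratio
\[
    \Lambda_n \circeq \sum_{i=1}^n \log \frac{g(X_i \nvert \bb{\theta}_n, \bb{\kappa})}{g(X_i \nvert \bb{\theta}_0, \bb{\kappa})} = \sum_{i=1}^n \log \frac{f(Y_i \nvert \bb{\theta}_n)}{f(Y_i \nvert \bb{\theta}_0)},
\]
which depends on the sample only through $Y_i \circeq \sigma^{-1}(X_i - \mu) \sim \text{APD}(\bb{\theta}_0,(0,1)^{\top})$ under $H_0$. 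The map $\bb{\theta} \mapsto \sqrt{f(y \nvert \bb{\theta})}$ is differentiable in quadratic mean (DQM) at $\bb{\theta}_0$; this is checked directly from \eqref{eq:APD.density}, the $\bb{\theta}$-derivatives of $\log f(y \nvert \bb{\theta})$ at $\bb{\theta}_0$ being governed by $|y|^{\lambda}\,\text{sign}(y)$ and $|y|^{\lambda}\log|y|$ (see \eqref{eq:vector.d}), both square-integrable against $f(\cdot \nvert \bb{\theta}_0)$ for every $\lambda \geq 1$. This DQM property is also what underlies the mutual contiguity invoked in Proposition \ref{prop:iff.convergence.H0.H1n}, and, by the usual LAN machinery (see e.g.\ Chapter 7 in \cite{MR1652247}), it yields, under $H_0$,
\[
    \Lambda_n = \bb{\delta}^{\top} \frac{1}{\sqrt{n}} \sum_{i=1}^n \bb{d}_{\bb{\theta}}(Y_i) - \tfrac{1}{2}\, \bb{\delta}^{\top} J_{\bb{\theta}\bb{\theta}}\, \bb{\delta} + o_{\PP_{H_0}}(1),
\]
where the $o(1)$ factor in the definition \eqref{eq:H1.n} of $\bb{\theta}_n$ is harmless because $\sqrt{n}(\bb{\theta}_n - \bb{\theta}_0) \to \bb{\delta}$, and the quadratic term uses the information identity $\mathrm{Cov}_{H_0}(\bb{d}_{\bb{\theta}}(Y_1)) = J_{\bb{\theta}\bb{\theta}}$ (the upper-left block of $J$ in \eqref{eq:asymptotic.normality.vector.d.matrix}).

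Next I would combine this expansion with Proposition \ref{prop:asymptotic.normality.vector.d}: since $\Lambda_n$ is, up to $o_{\PP_{H_0}}(1)$, an affine image of the i.i.d.\ sum appearing there, the continuous mapping theorem and Slutsky's lemma give, under $H_0$,
\[
    (\bb{W}_n,\ \Lambda_n) \rightsquigarrow \mathcal{N}_5\!\left(\left(\hspace{-1mm}\begin{array}{c} \bb{0} \\ -\tfrac{1}{2}\bb{\delta}^{\top}J_{\bb{\theta}\bb{\theta}}\bb{\delta} \end{array}\hspace{-1mm}\right),\ \left(\hspace{-1mm}\begin{array}{cc} J & \bb{c} \\ \bb{c}^{\top} & \bb{\delta}^{\top}J_{\bb{\theta}\bb{\theta}}\bb{\delta} \end{array}\hspace{-1mm}\right)\right),
\]
where $\bb{c}$ is the covariance under $H_0$ of the summand $\bigl(\bb{d}_{\bb{\theta}}(Y_1)^{\top}, \bb{d}_{\bb{\kappa}}(Y_1)^{\top}\bigr)^{\top}$ with $\bb{\delta}^{\top}\bb{d}_{\bb{\theta}}(Y_1)$, that is, $\bb{c} = \bigl((J_{\bb{\theta}\bb{\theta}}\bb{\delta})^{\top}, (J_{\bb{\theta}\bb{\kappa}}\bb{\delta})^{\top}\bigr)^{\top}$, read directly off the block structure of $J$ (using that $J_{\bb{\theta}\bb{\kappa}}$ is symmetric, so $J_{\bb{\kappa}\bb{\theta}} = J_{\bb{\theta}\bb{\kappa}}$).

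Finally I would invoke Le Cam's third lemma (see e.g.\ Section 6.2 in \cite{MR1652247}): because $\{\PP_{H_{1,n}}\}$ is contiguous with respect to $\{\PP_{H_0}\}$ and the asymptotic mean of $\Lambda_n$ equals $-\tfrac12$ times its asymptotic variance, the limit law of $\bb{W}_n$ under $H_{1,n}$ is the $H_0$-limit law $\mathcal{N}_4(\bb{0},J)$ with the mean translated by $\bb{c}$, which is precisely \eqref{eq:asymptotic.normality.vector.d.H1}. The step I expect to be the main obstacle is the rigorous verification of DQM (hence of the LAN expansion) down to the boundary value $\lambda = 1$ of the Laplace case, where one must confirm that the singularity of $|y|^{\lambda}\log|y|$ at $y = 0$ together with its tail behaviour causes no integrability problem; the remainder is bookkeeping with the matrix $J$ already computed in Proposition \ref{prop:asymptotic.normality.vector.d}.
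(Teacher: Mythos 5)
Your proposal is correct and follows the same skeleton as the paper's proof: establish a LAN expansion of $\log(\rd\PP_{H_{1,n}}/\rd\PP_{H_{0,n}})$, deduce the joint $5$-dimensional normal limit of this log-ratio with the score vector under $H_0$ (with cross-covariance $\bb{c} = ((J_{\bb{\theta}\bb{\theta}}\bb{\delta})^{\top},(J_{\bb{\theta}\bb{\kappa}}\bb{\delta})^{\top})^{\top}$, exactly as you read off), and conclude by Le Cam's third lemma (Lemma \ref{lem:Le Cam.third.lemma}). The one genuine divergence is how the LAN expansion is obtained. The paper does it by hand in the proof of Proposition \ref{prop:iff.convergence.H0.H1n}: a second-order Taylor expansion \eqref{eq:second.order.taylor.ratio.densities.H0} with an integral-form remainder, controlled by the uniform law of large numbers of Lemma \ref{lem:standard.ULLN} applied to the Hessian $\frac{\partial^2}{\partial\bb{\theta}^2}\log f(y\nvert\bb{t})$ on a compact neighbourhood of $\bb{\theta}_0$, together with the information identity $\overline{U}_{j,k}(\bb{\theta}_0) = -J_{\theta_j\theta_k}$. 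You instead delegate to differentiability in quadratic mean and the standard LAN theorem of \cite{MR1652247}. Both routes work; yours is shorter on paper but shifts the burden to a careful verification of DQM, whereas the paper's explicit Taylor argument reuses machinery (Lemma \ref{lem:standard.ULLN}) already developed for Proposition \ref{prop:MLE.score.asymptotics.Taylor} and makes the constant $-\tfrac12\bb{\delta}^{\top}J_{\bb{\theta}\bb{\theta}}\bb{\delta}$ emerge transparently. One small clarification: the boundary case $\lambda=1$ that you flag as the likely obstacle is not actually delicate here, since the $\bb{\theta}$-score components $|y|^{\lambda}\mathrm{sign}(y)$ and $|y|^{\lambda}\log|y|$ remain square-integrable (indeed bounded near $y=0$) for every $\lambda\geq 1$; the Laplace-case difficulty in the paper lives entirely in the $\mu$-derivative of the score (the term $|y|^{\lambda-1}\log|y|$ in $U_3$ of Proposition \ref{prop:MLE.score.asymptotics.Taylor}), which does not enter the likelihood-ratio expansion at all.
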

    By combining Corollary~\ref{cor:score.asymptotics.explicit.score.H1} and Proposition~\ref{prop:asymptotic.normality.vector.d.H1}, we see that
    \begin{equation}\label{eq:asymp.r.n.under.H1}
        n^{1/2} \bb{r}_n(\hat{\bb{\kappa}}_n)
        \stackrel{\PP_{H_{\hspace{-0.2mm}1\hspace{-0.1mm},\hspace{-0.2mm}n}}}{\scalebox{2}[1.2]{$\rightsquigarrow$}}
        \mathcal{N}_2\left((J_{\bb{\theta}\bb{\theta}} - J_{\bb{\theta} \bb{\kappa}} J_{\bb{\kappa} \bb{\kappa}}^{-1} J_{\bb{\theta} \bb{\kappa}}^{\top}) \bb{\delta}, \Sigma\right),
    \end{equation}
    where the expressions for $J_{\bb{\theta} \bb{\theta}}$, $J_{\bb{\theta} \bb{\kappa}}$ and $J_{\bb{\kappa} \bb{\kappa}}$ are found in \eqref{eq:asymptotic.normality.vector.d.matrix}, and the covariance matrix $\Sigma$ was previously calculated in \eqref{eq:calculation.Sigma}.
    Given \eqref{eq:vector.d} and \eqref{eq:score.statistic.unknown.2}, we deduce from \eqref{eq:asymp.r.n.under.H1} that, as $n\to \infty$,
    \begin{equation}\label{eqn:law.B.K.H1}
        n^{1/2}
        \begin{pmatrix}
            S_{\lambda}(\Xn)\\[1mm]
            K_{\lambda}(\Xn) - \frac{\lambda+\log \lambda + \psi(1/\lambda)}{\lambda}
        \end{pmatrix}
        \stackrel{\PP_{H_{\hspace{-0.2mm}1\hspace{-0.1mm},\hspace{-0.2mm}n}}}{\scalebox{2}[1.2]{$\rightsquigarrow$}}
        \mathcal{N}_2
        \left(
        \begin{pmatrix}
            -\delta_1 \Vone^{1/2} \\[1mm]
            -\delta_2 \Vtwo^{1/2}
        \end{pmatrix},
        \begin{pmatrix}
            1 & 0 \\[1mm]
            0 & 1
        \end{pmatrix}
        \right),
    \end{equation}
    where
    \begin{equation}
        \begin{aligned}
            V_{1,\lambda}
            &\leqdef \left\{\frac{1}{2} \cdot \left[\Sigma^{-1/2} (J_{\bb{\theta}\bb{\theta}} - J_{\bb{\theta} \bb{\kappa}} J_{\bb{\kappa} \bb{\kappa}}^{-1} J_{\bb{\theta} \bb{\kappa}}^{\top})\right]_{11}\right\}^2 \\
            &= \left\{\frac{1}{2} \cdot \frac{4(1 + \lambda) - \left(-\frac{2\lambda^{2-1/\lambda}}{\Gamma(1/\lambda)}\right) \cdot \frac{\Gamma(1/\lambda)}{\lambda^{2-2/\lambda}\Gamma(2-1/\lambda)} \cdot \left(-\frac{2\lambda^{2-1/\lambda}}{\Gamma(1/\lambda)}\right)}{\sqrt{1 + \lambda - \frac{\lambda^2}{\Gamma(2-1/\lambda) \Gamma(1/\lambda)}}}\right\}^2 = 4 (1 + \lambda) - \frac{4 \lambda^2}{\Gamma(2-1/\lambda) \Gamma(1/\lambda)}, \\
            %%%
            V_{2,\lambda}
            &\leqdef \left\{\lambda \cdot \left[\Sigma^{-1/2} (J_{\bb{\theta}\bb{\theta}} - J_{\bb{\theta} \bb{\kappa}} J_{\bb{\kappa} \bb{\kappa}}^{-1} J_{\bb{\theta} \bb{\kappa}}^{\top})\right]_{22}\right\}^2 \\
            &= \left\{\lambda \cdot \frac{\frac{(1+1/\lambda)\psi_1(1+1/\lambda)+\phi^2-1}{\lambda^3} - \left(-\frac{\phi}{\lambda}\right) \cdot \frac{1}{\lambda} \cdot \left(-\frac{\phi}{\lambda}\right)}{\sqrt{\frac{(1+1/\lambda) \psi_1(1+1/\lambda)-1}{\lambda}}}\right\}^2 = \frac{(1+1/\lambda) \psi_1(1+1/\lambda)-1}{\lambda^3}.
        \end{aligned}
    \end{equation}
    Assuming that we have proofs for Propositions~\ref{prop:iff.convergence.H0.H1n}~and~\ref{prop:asymptotic.normality.vector.d.H1} (see Section~\ref{sec:main.result.2.proofs} below), this ends the proof of Theorem~3.10.

\subsection{Proofs of Propositions~\ref{prop:iff.convergence.H0.H1n}~and~\ref{prop:asymptotic.normality.vector.d.H1} to complete the proof of Theorem~3.10}\label{sec:main.result.2.proofs}

    In order to establish our results under the local alternatives $H_{\hspace{-0.2mm}1\hspace{-0.1mm},\hspace{-0.2mm}n}$, we use Le Cam's first and third lemma (see Lemma~6.4 and Example~6.7 of \cite{MR1652247}).
    The proof structure is inspired by the one presented in Section~4 of \cite{MR2442221}.

    \begin{lemma}[Le Cam's first lemma]\label{lem:Le Cam.first.lemma}
        Let $(P_n, n\in \N)$ and $(Q_n, n\in \N)$ be sequences of probability measures on the measurable spaces $(\Omega_n,\mathcal{A}_n)$.
        Then, the following statements are equivalent:
        \begin{enumerate}[\quad(i)]
            \item $Q_n \lhd P_n$, i.e., $(Q_n, n\in \N)$ is contiguous with respect to $(P_n, n\in \N)$.\vspace{-0.5mm}
            \item If $\frac{\rd P_n}{\rd Q_n} \stackrel{Q_n}{\scalebox{2}[1.2]{$\rightsquigarrow$}} U$ along a subsequence, then $\PP(U > 0) = 1$. \vspace{-1mm}
            \item If $\frac{\rd Q_n}{\rd P_n} \stackrel{P_n}{\scalebox{2}[1.2]{$\rightsquigarrow$}} V$ along a subsequence, then $\EE[V] = 1$. \vspace{-1mm}
            \item For any statistics $\bb{T}_n : \Omega_n \to \R^k$: If $\bb{T}_n \xrightarrow{P_n} 0$, then $\bb{T}_n \xrightarrow{Q_n} 0$. \vspace{-1mm}
        \end{enumerate}
    \end{lemma}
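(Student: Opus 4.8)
This is the classical Le Cam first lemma (Lemma~6.4 in \cite{MR1652247}); the plan is to recall its proof, organized around the likelihood ratios and a tightness argument. The equivalence (i)$\Leftrightarrow$(iv) is immediate: if $Q_n \lhd P_n$ and $\bb{T}_n \to 0$ under $P_n$, apply contiguity to the events $\{\|\bb{T}_n\| > \varepsilon\}$; conversely, taking $\bb{T}_n = \ind{1}_{A_n}$ recovers contiguity from (iv). So the substance is the cycle (i)$\Rightarrow$(iii)$\Rightarrow$(ii)$\Rightarrow$(i). First I would fix notation: write $\mu_n \circeq P_n + Q_n$, $p_n \circeq \rd P_n / \rd \mu_n$, $q_n \circeq \rd Q_n / \rd \mu_n$, and set $L_n \circeq q_n / p_n$ on $\{p_n > 0\}$ (the density of the $P_n$-absolutely continuous part $Q_n^a$ of $Q_n$) and $L_n' \circeq p_n / q_n$ on $\{q_n > 0\}$.

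Three elementary facts will drive everything. \emph{(a)} Markov's inequality gives $P_n(L_n > M) \leq M^{-1}$ and $Q_n(L_n' > M) \leq M^{-1}$, so the laws of $L_n$ under $P_n$ and of $L_n'$ under $Q_n$ are tight on the compactification $[0,\infty]$; by Prokhorov's theorem, every subsequence has a further subsequence along which $L_n \rightsquigarrow V$ under $P_n$ and $L_n' \rightsquigarrow U$ under $Q_n$, with $U, V \in [0,\infty)$ almost surely (the Markov bounds forbid mass at $\infty$) and $\EE[V] \leq \liminf_n \EE_{P_n}[L_n] \leq 1$ by Fatou's lemma. \emph{(b)} For any bounded continuous $g$, splitting $\{q_n > 0\}$ according to $\{p_n > 0\}$ and substituting $q_n = L_n p_n$ gives the change-of-variables identity
\begin{equation*}
    \EE_{Q_n}\big[g(L_n')\big] = \EE_{P_n}\big[g(1/L_n)\, L_n\, \ind{1}_{\{L_n > 0\}}\big] + g(0)\, Q_n^s(\Omega_n),
\end{equation*}
where $Q_n^s$ is the $P_n$-singular part of $Q_n$. \emph{(c)} If $Q_n \lhd P_n$, then $Q_n^s(\Omega_n) \to 0$, since $Q_n^s$ is carried by a $P_n$-null set, which therefore has $Q_n$-probability tending to $0$ by contiguity.

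Then I would close the cycle. \textbf{(ii)$\Rightarrow$(i):} assume $P_n(A_n) \to 0$; if $Q_n(A_n) \not\to 0$, there is a subsequence with $Q_n(A_n) \geq \varepsilon$, along which (after a further extraction) $L_n' \rightsquigarrow U$, so $\PP(U > 0) = 1$ by (ii). Since $\delta q_n \leq p_n$ on $\{L_n' > \delta\}$, we have $Q_n(A_n) \leq Q_n(A_n \cap \{L_n' > \delta\}) + Q_n(L_n' \leq \delta) \leq \delta^{-1} P_n(A_n) + Q_n(L_n' \leq \delta)$, and $\limsup_n Q_n(L_n' \leq \delta) \leq \PP(U \leq \delta) \downarrow \PP(U = 0) = 0$ as $\delta \downarrow 0$ — a contradiction. \textbf{(i)$\Rightarrow$(iii):} contiguity forces $\{L_n\}$ to be uniformly $P_n$-integrable, for otherwise (along some subsequence) the sets $A_n \circeq \{L_n > M_n\}$ with a suitable $M_n \to \infty$ would satisfy $P_n(A_n) \leq M_n^{-1} \to 0$ yet $Q_n(A_n) \geq Q_n^a(A_n) = \EE_{P_n}[L_n \ind{1}_{A_n}] \geq \varepsilon$, contradicting (i). Combined with $\EE_{P_n}[L_n] = Q_n^a(\Omega_n) = 1 - Q_n^s(\Omega_n) \to 1$ by (c), uniform integrability yields $\EE[V] = \lim_n \EE_{P_n}[L_n] = 1$ for every subsequential limit $V$, which is (iii). \textbf{(iii)$\Rightarrow$(ii):} take a subsequence with $L_n' \rightsquigarrow U$ under $Q_n$ and, after a further extraction, $L_n \rightsquigarrow V$ under $P_n$; by (iii), $\EE[V] = 1$, which with $\EE_{P_n}[L_n] \leq 1$ and $\EE[V] \leq \liminf_n \EE_{P_n}[L_n]$ forces $\EE_{P_n}[L_n] \to 1$, hence $\{L_n\}$ is uniformly $P_n$-integrable and, via $\EE_{P_n}[L_n] = 1 - Q_n^s(\Omega_n)$, also $Q_n^s(\Omega_n) \to 0$. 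Passing to the limit in (b) — using uniform integrability of $\{L_n\}$ for the first term on the right and (c) for the boundary term — gives $\EE[g(U)] = \EE[g(1/V)\, V\, \ind{1}_{\{V > 0\}}]$ for every bounded continuous $g$; approximating $\ind{1}_{(0,\infty)}$ from below by bounded continuous functions and applying monotone convergence then gives $\PP(U > 0) = \EE[V \ind{1}_{\{V > 0\}}] = \EE[V] = 1$, which is (ii).

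The main obstacle, and the only place requiring genuine care, is the interplay between the Lebesgue decompositions and uniform integrability: concretely, that contiguity is equivalent to uniform $P_n$-integrability of $\{\rd Q_n / \rd P_n\}$ together with the vanishing of the singular mass $Q_n^s(\Omega_n)$, and that in the change-of-variables identity (b) one may pass to the limit without losing mass to $0$ or $\infty$. Everything else reduces to Prokhorov's theorem, the portmanteau theorem, and routine subsequence extraction.
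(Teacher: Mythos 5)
Your proof is correct and is the classical argument for Le Cam's first lemma; the paper itself offers no proof of this statement, simply citing Lemma 6.4 of van der Vaart's \emph{Asymptotic Statistics}, and your cycle (i)$\Leftrightarrow$(iv), (i)$\Rightarrow$(iii)$\Rightarrow$(ii)$\Rightarrow$(i) via the Lebesgue decomposition $Q_n = Q_n^a + Q_n^s$, tightness of the likelihood ratios on $[0,\infty]$, and the change-of-variables identity is essentially the proof given in that reference. The key technical points you flag (uniform $P_n$-integrability of $\rd Q_n^a/\rd P_n$, vanishing of $Q_n^s(\Omega_n)$, and passing to the limit in the identity without losing mass at $0$ or $\infty$) are exactly the right ones and are handled correctly.
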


    \begin{lemma}[Le Cam's third lemma]\label{lem:Le Cam.third.lemma}
        Let $(P_n, n\in \N)$ and $(Q_n, n\in \N)$ be sequences of probability measures on the measurable spaces $(\Omega_n,\mathcal{A}_n)$, and let $\bb{W}_{\hspace{-0.5mm}n} : \Omega_n \to \R^k$ be a sequence of random vectors.
        Suppose that $Q_n \lhd P_n$ and
        \begin{equation}
            \begin{pmatrix}
                \bb{W}_{\hspace{-0.5mm}n} \\[1mm]
                \log \frac{\rd Q_n}{\rd P_n}
            \end{pmatrix}
            \stackrel{P_n}{\scalebox{2}[1.2]{$\rightsquigarrow$}} \mathcal{N}_{k+1}
            \left(
            \begin{pmatrix}
                m \\[1mm]
                -\frac{1}{2} s^2
            \end{pmatrix}
            ,
            \begin{pmatrix}
                M &\tau \\[1mm]
                \tau^{\top} &s^2
            \end{pmatrix}
            \right),
        \end{equation}
        where $M\in \R^{k\times k}$ is positive definite, $m,\tau\in \R^k$ and $s^2 > 0$, then
        \begin{equation}
            \bb{W}_{\hspace{-0.5mm}n} \stackrel{Q_n}{\scalebox{2}[1.2]{$\rightsquigarrow$}} \mathcal{N}_k(m + \tau, M).
        \end{equation}
    \end{lemma}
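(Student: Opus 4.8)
The plan is to establish Le Cam's third lemma by means of the Lévy continuity theorem: I will compute the pointwise limit of the characteristic function of $\bb{W}_n$ under $Q_n$ and recognize it as the characteristic function of $\mathcal{N}_k(m + \tau, M)$. Throughout, write $W_n \circeq \log \frac{\rd Q_n}{\rd P_n}$ for the log-likelihood ratio (interpreted via the Lebesgue decomposition $Q_n = Q_n^{\mathrm{ac}} + Q_n^{\perp}$ with $Q_n^{\mathrm{ac}} \ll P_n$ and $Q_n^{\perp} \perp P_n$). The starting point is the change-of-measure identity, valid for every $\bb{t}\in \R^k$,
\[
    \EE_{Q_n}\big[e^{i \bb{t}^{\top} \bb{W}_n}\big] = \EE_{P_n}\big[e^{i \bb{t}^{\top} \bb{W}_n}\, e^{W_n}\big] + r_n, \qquad |r_n| \leq Q_n^{\perp}(\Omega_n),
\]
where the remainder $r_n$ collects the contribution of the singular part of $Q_n$.

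First I would argue that $r_n \to 0$. Since $W_n \stackrel{P_n}{\rightsquigarrow} W$ with $W\sim \mathcal{N}(-s^2/2, s^2)$, the limit $e^W$ satisfies $\EE[e^W] = e^{-s^2/2 + s^2/2} = 1$, which is consistent with contiguity via part (iii) of Le Cam's first lemma (Lemma~\ref{lem:Le Cam.first.lemma}). Because $e^{W_n}\geq 0$ and $e^{W_n}\stackrel{P_n}{\rightsquigarrow} e^{W}$, the portmanteau inequality gives $\liminf_n \EE_{P_n}[e^{W_n}] \geq \EE[e^W] = 1$, while $\EE_{P_n}[e^{W_n}] = Q_n^{\mathrm{ac}}(\Omega_n) \leq 1$ always holds; hence $Q_n^{\mathrm{ac}}(\Omega_n)\to 1$ and $Q_n^{\perp}(\Omega_n) = 1 - Q_n^{\mathrm{ac}}(\Omega_n) \to 0$, so $r_n\to 0$.

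Second, and this is where the main obstacle lies, I would pass to the limit inside the $P_n$-expectation. The continuous mapping theorem applied to $(\bb{w},\ell)\mapsto (\bb{w}, e^{\ell})$ upgrades the assumed joint Gaussian convergence to $(\bb{W}_n, e^{W_n}) \stackrel{P_n}{\rightsquigarrow} (\bb{W}, e^{W})$. To convert this weak convergence into convergence of the mixed mean $\EE_{P_n}[e^{i\bb{t}^{\top}\bb{W}_n} e^{W_n}] \to \EE[e^{i\bb{t}^{\top}\bb{W}} e^{W}]$, I need the weights $e^{W_n}$ to be uniformly integrable under $P_n$. The hard part will be establishing this uniform integrability; it follows from the combination of $e^{W_n}\stackrel{P_n}{\rightsquigarrow} e^W$ with the convergence of the means $\EE_{P_n}[e^{W_n}]\to 1 = \EE[e^W]$ obtained above (for nonnegative variables, convergence in distribution together with convergence of the first moments to the finite limiting moment forces uniform integrability). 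Splitting $e^{i\bb{t}^{\top}\bb{W}_n}$ into its bounded continuous real and imaginary parts and dominating by $\|\cdot\|_{\infty}\, e^{W_n}$ then yields the desired convergence of expectations. This step is precisely where contiguity does the essential work.

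Finally, the limit reduces to a Gaussian computation. Setting $\bb{a}\circeq (i\bb{t}^{\top}, 1)^{\top}\in \mathbb{C}^{k+1}$ and using the identity $\EE[e^{\bb{a}^{\top}\bb{U}}] = \exp\big(\bb{a}^{\top}\bb{\mu} + \tfrac12 \bb{a}^{\top}\bb{\Sigma}\,\bb{a}\big)$ for a Gaussian vector $\bb{U}$ with mean $\bb{\mu}$ and covariance $\bb{\Sigma}$, I obtain
\[
    \EE\big[e^{i\bb{t}^{\top}\bb{W} + W}\big] = \exp\Big(i\bb{t}^{\top} m - \tfrac{s^2}{2} + \tfrac12\big(-\bb{t}^{\top} M \bb{t} + 2 i\, \bb{t}^{\top}\tau + s^2\big)\Big) = \exp\Big(i\bb{t}^{\top}(m+\tau) - \tfrac12 \bb{t}^{\top} M \bb{t}\Big),
\]
the two $\pm s^2/2$ contributions cancelling. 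This is the characteristic function of $\mathcal{N}_k(m+\tau, M)$, so combining with the first two steps gives $\EE_{Q_n}[e^{i\bb{t}^{\top}\bb{W}_n}]\to \exp\big(i\bb{t}^{\top}(m+\tau) - \tfrac12 \bb{t}^{\top} M \bb{t}\big)$ for every $\bb{t}\in\R^k$, and the Lévy continuity theorem delivers $\bb{W}_n \stackrel{Q_n}{\rightsquigarrow}\mathcal{N}_k(m+\tau, M)$.
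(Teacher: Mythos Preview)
Your argument is correct and is essentially the standard characteristic-function proof of Le Cam's third lemma (as found, e.g., in van der Vaart's \emph{Asymptotic Statistics}). Note, however, that the paper does not actually prove Lemma~\ref{lem:Le Cam.third.lemma}: it is stated there as a known result with a citation to \cite{MR1652247} (Lemma~6.4 and Example~6.7), and then invoked as a black box in the proof of Proposition~\ref{prop:asymptotic.normality.vector.d.H1}. So there is no ``paper's own proof'' to compare against; you have supplied a complete and correct proof where the paper simply quotes the result. One minor remark: your sentence ``This step is precisely where contiguity does the essential work'' slightly overstates the role of the hypothesis $Q_n\lhd P_n$, since you have in fact re-derived the relevant consequence (namely $\EE_{P_n}[e^{W_n}]\to 1$ and $Q_n^{\perp}(\Omega_n)\to 0$) directly from the assumed Gaussian limit of $W_n$; the contiguity assumption is redundant once the joint normal convergence with mean $-s^2/2$ and variance $s^2$ in the last coordinate is in place.
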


    \begin{proof}[\bf Proof of Proposition~\ref{prop:iff.convergence.H0.H1n}]
        As suggested by a referee, this result can be proved using Lemma~7.6 of \cite{MR1652247} to establish the differentiability in quadratic mean of our density function under $H_0$. Using Theorem~2 of \cite{MR1652247}, this would imply a second order Taylor expansion akin to \eqref{eq:second.order.taylor.ratio.densities.H0} along with the convergence of the first and second order terms as in \eqref{eq:referee.first.order} and \eqref{eq:prop:iff.convergence.H0.H1n.eq.main}. This would then yield \eqref{eq:prop:iff.convergence.H0.H1n.convergence.ratio.measures} and the same argument using Le Cam's first lemma (see below \eqref{eq:prop:iff.convergence.H0.H1n.convergence.ratio.measures}) would finally give us the contiguity $\PP_{H_{\hspace{-0.1mm}0\hspace{-0.1mm},\hspace{-0.2mm}n}} \hspace{-0.5mm}\lhd \rhd \hspace{0.5mm} \PP_{H_{\hspace{-0.2mm}1\hspace{-0.1mm},\hspace{-0.2mm}n}}$. This proof is shorter but has the downside of involving the concept of differentiability in quadratic mean, which may slightly obscure the relation between the statement of Proposition~\ref{prop:iff.convergence.H0.H1n} and the aforementioned contiguity for some readers.

        Here is another straightforward (albeit lengthier) approach to the proof.
        We want to use Le Cam's first lemma.
        Assume that our vector of observations is the identity function
        \begin{equation}
            \bb{X} \leqdef (X_1,X_2,\dots,X_n) \leqdef \mathrm{Id} : (\Omega_n \leqdef \R^n, \mathcal{A}_n \leqdef \mathcal{L}(\R^n),\lambda) \longrightarrow (\R^n, \mathcal{B}(\R^n),\lambda),
        \end{equation}
        where $\mathcal{L}(\R^n)$ denotes the completion of the Borel $\sigma$-algebra $\mathcal{B}(\R^n)$, and where $\lambda$ denotes the Lebesgue measure.
        On $(\Omega_n,\mathcal{A}_n)$, define the probability measures
        \begin{equation}\label{eq:prop:iff.convergence.H0.H1n.def.Pn.Qn}
            \begin{aligned}
                &\PP_{H_{\hspace{-0.1mm}0\hspace{-0.1mm},\hspace{-0.2mm}n}}(A) \leqdef \int_A \, \prod_{i=1}^n f(X_i(\omega) \nvert \bb{\theta}_0,\bb{\kappa}) \, \rd \lambda(\omega), \quad A\in \mathcal{A}_n, \\
                &\PP_{H_{\hspace{-0.2mm}1\hspace{-0.1mm},\hspace{-0.2mm}n}}(A) \leqdef \int_A \, \prod_{i=1}^n f(X_i(\omega) \nvert \bb{\theta}_n,\bb{\kappa}) \, \rd \lambda(\omega), \quad A\in \mathcal{A}_n,
            \end{aligned}
        \end{equation}
        where $\bb{\theta}_n \leqdef \bb{\theta}_0 + (1 + o(1)) n^{-1/2} \bb{\delta}$ and $H_{\hspace{-0.1mm}0\hspace{-0.1mm},\hspace{-0.2mm}n} \leqdef H_0$.
        By construction, the law of $\bb{X}$ under $\PP_{H_{\hspace{-0.1mm}0\hspace{-0.1mm},\hspace{-0.2mm}n}}$ corresponds to the null hypothesis $H_0$ and the law $\bb{X}$ under $\PP_{H_{\hspace{-0.2mm}1\hspace{-0.1mm},\hspace{-0.2mm}n}}$ corresponds the alternative hypothesis $H_{\hspace{-0.2mm}1\hspace{-0.1mm},\hspace{-0.2mm}n}$.
        Since $f$ is positive on $\R$, the measures $\PP_{H_{\hspace{-0.1mm}0\hspace{-0.1mm},\hspace{-0.2mm}n}}$, $\PP_{H_{\hspace{-0.2mm}1\hspace{-0.1mm},\hspace{-0.2mm}n}}$ and $\lambda$ are equivalent on $(\Omega_n,\mathcal{A}_n)$.
        From \eqref{eq:prop:iff.convergence.H0.H1n.def.Pn.Qn}, we deduce that
        \begin{equation}
            \begin{aligned}
                \frac{\rd \PP_{H_{\hspace{-0.2mm}1\hspace{-0.1mm},\hspace{-0.2mm}n}}}{\rd \PP_{H_{\hspace{-0.1mm}0\hspace{-0.1mm},\hspace{-0.2mm}n}}} = \frac{\rd \PP_{H_{\hspace{-0.2mm}1\hspace{-0.1mm},\hspace{-0.2mm}n}} / \rd \lambda}{\rd \PP_{H_{\hspace{-0.1mm}0\hspace{-0.1mm},\hspace{-0.2mm}n}} / \rd \lambda} = \frac{\prod_{i=1}^n f(X_i \nvert \bb{\theta}_n,\bb{\kappa})}{\prod_{i=1}^n f(X_i \nvert \bb{\theta}_0,\bb{\kappa})} = \frac{\prod_{i=1}^n f(Y_i \nvert \bb{\theta}_n,(0,1)^{\top})}{\prod_{i=1}^n f(Y_i \nvert \bb{\theta}_0,(0,1)^{\top})},
            \end{aligned}
        \end{equation}
        where $Y_i \leqdef \sigma^{-1}(X_i - \mu)$.

        Using a second-order Taylor expansion around $\bb{\theta}_0$, we have, under $H_0 : X_i \sim \mathrm{APD}_{\lambda}(\bb{\theta}_0,\bb{\kappa})$,
        \begin{equation}\label{eq:second.order.taylor.ratio.densities.H0}
            \begin{aligned}
            &\log\left(\frac{\rd \PP_{H_{\hspace{-0.2mm}1\hspace{-0.1mm},\hspace{-0.2mm}n}}}{\rd \PP_{H_{\hspace{-0.1mm}0\hspace{-0.1mm},\hspace{-0.2mm}n}}}\right)
            = \sum_{i=1}^n (\log f(Y_i \nvert \bb{\theta}_n,(0,1)^{\top}) - \log f(Y_i \nvert \bb{\theta}_0,(0,1)^{\top})) \\
            &= (1 + o(1)) \, \bb{\delta}^{\top} \frac{1}{\sqrt{n}} \sum_{i=1}^n \bb{d}_{\bb{\theta}}(Y_i) + (1 + o(1))^2 \, \bb{\delta}^{\top} \hspace{-1mm}\int_0^1 \int_0^1 v \frac{1}{n} \sum_{i=1}^n \frac{\partial^2}{\partial \bb{\theta}^2} \log f(Y_i \nvert \bb{t}_{n,u,v},(0,1)^{\top}) \rd u \rd v \, \bb{\delta},
            \end{aligned}
        \end{equation}
        where $\bb{t}_{n,u,v} \leqdef \bb{\theta}_0 + uv (\bb{\theta}_n - \bb{\theta}_0)$.
        From the convergence of the first two components in \eqref{eq:asymptotic.normality.vector.d}, we know that, as $n\to\infty$,
        \begin{equation}\label{eq:referee.first.order}
            (1 + o(1))\, \bb{\delta}^{\top} \frac{1}{\sqrt{n}} \sum_{i=1}^n \bb{d}_{\bb{\theta}}(Y_i) \stackrel{\PP_{H_{\hspace{-0.1mm}0\hspace{-0.1mm},\hspace{-0.2mm}n}}}{\scalebox{2}[1.2]{$\rightsquigarrow$}} \mathcal{N}(0,\bb{\delta}^{\top} \hspace{-0.5mm}J_{\bb{\theta}\bb{\theta}} \, \bb{\delta}).
        \end{equation}
        For the second term on the right-hand side of \eqref{eq:second.order.taylor.ratio.densities.H0}, we want to apply a standard uniform law of large numbers (Lemma~\ref{lem:standard.ULLN}).
        From the expression of $f(y \nvert \bb{t},(0,1)^{\top})$ in (1), we see that for each $(j,k)\in \{1,2\}^2$, the function
        $U_{j,k}(y,\bb{t}) \leqdef \frac{\partial^2}{\partial \theta_j \partial \theta_k} \log f(y \nvert \bb{\theta},(0,1)^{\top})|_{\bb{\theta}=\bb{t}}$
        satisfies:
        \begin{description}
            \item[({\rm C.1})] For all $y\in \R$, $\bb{t}\mapsto U_{j,k}(y,\bb{t})$ is continuous on the compact $\mathcal{C} \leqdef [\frac{1}{4},\frac{3}{4}] \times [\frac{\lambda}{2},\frac{3\lambda}{2}]$;
            \item[({\rm C.2})] There exists a finite polynomial $K : \R\to \R$ such that $|U_{j,k}(y,\bb{t})| \leq K(|y|)$ for all $(y,\bb{t})\in \R \times \mathcal{C}$ (which implies that $K(|y|)$ is integrable under $f(y \nvert \bb{\theta}_0,(0,1)^{\top}) \rd y$).
        \end{description}
        Take $N\in \N$ large enough that $\bb{\theta}_n\in \mathcal{C}$ for all $n \geq N$.
        By Jensen's inequality and Lemma~\ref{lem:standard.ULLN} (under $H_0$), we deduce that
        \begin{equation}\label{eq:prop:iff.convergence.H0.H1n.eq.main}
            \begin{aligned}
                &\bigg|\int_0^1 \int_0^1 v \frac{1}{n} \sum_{i=1}^n U_{j,k}(Y_i,\bb{t}_{n,u,v}) \rd u \rd v - \int_0^1 \int_0^1 v \frac{1}{n} \sum_{i=1}^n \overline{U}_{j,k}(\bb{\theta}_0) \rd u \rd v\bigg| \\
                &\quad\leq \int_0^1 \int_0^1 v \frac{1}{n} \sum_{i=1}^n \left|U_{j,k}(Y_i,\bb{t}_{n,u,v}) - \overline{U}_{j,k}(\bb{\theta}_0)\right| \rd u \rd v \\
                &\quad\leq \frac{1}{2} \sup_{\bb{t}\in B_{\|\bb{\theta}_n - \bb{\theta}_0\|_2}[\bb{\theta}_0]} \frac{1}{n} \sum_{i=1}^n \left|U_{j,k}(Y_i,\bb{t}) - \overline{U}_{j,k}(\bb{\theta}_0)\right| \xrightarrow{\PP_{H_{\hspace{-0.1mm}0\hspace{-0.1mm},\hspace{-0.2mm}n}}} 0.
            \end{aligned}
        \end{equation}
        By definition of the matrix $J$ in \eqref{eq:asymptotic.normality.vector.d}, note that $\overline{U}_{j,k}(\bb{\theta_0}) = -J_{\theta_j\theta_k}$ (this can be seen by integrating by parts).
        Hence, \eqref{eq:prop:iff.convergence.H0.H1n.eq.main} shows that the second term on the right-hand side of \eqref{eq:second.order.taylor.ratio.densities.H0} is equal to $-\frac{1}{2} \bb{\delta}^{\top} \hspace{-0.5mm}J_{\bb{\theta}\bb{\theta}}\, \bb{\delta} + o_{\hspace{0.3mm}\PP_{H_{\hspace{-0.1mm}0\hspace{-0.1mm},\hspace{-0.2mm}n}}}(1)$.
        We deduce that
        \begin{equation}\label{eq:prop:iff.convergence.H0.H1n.convergence.ratio.measures}
            \log\left(\frac{\rd \PP_{H_{\hspace{-0.2mm}1\hspace{-0.1mm},\hspace{-0.2mm}n}}}{\rd \PP_{H_{\hspace{-0.1mm}0\hspace{-0.1mm},\hspace{-0.2mm}n}}}\right) \stackrel{\PP_{H_{\hspace{-0.1mm}0\hspace{-0.1mm},\hspace{-0.2mm}n}}}{\scalebox{2}[1.2]{$\rightsquigarrow$}} \mathcal{N}\left(-\frac{1}{2} \bb{\delta}^{\top} \hspace{-0.5mm}J_{\bb{\theta}\bb{\theta}}\, \bb{\delta}, \bb{\delta}^{\top}\hspace{-0.5mm}J_{\bb{\theta}\bb{\theta}}\, \bb{\delta}\right).
        \end{equation}
        Take any random variable $V > 0$ such that $\log(V)\stackrel{\PP_{H_{\hspace{-0.1mm}0\hspace{-0.1mm},\hspace{-0.2mm}n}}}{\sim} \mathcal{N}(-\frac{1}{2} \bb{\delta}^{\top} \hspace{-0.5mm}J_{\bb{\theta}\bb{\theta}}\, \bb{\delta}, \bb{\delta}^{\top} \hspace{-0.5mm}J_{\bb{\theta}\bb{\theta}}\, \bb{\delta})$.
        The continuous mapping theorem and \eqref{eq:prop:iff.convergence.H0.H1n.convergence.ratio.measures} imply that
        \begin{equation}
            \frac{\rd \PP_{H_{\hspace{-0.2mm}1\hspace{-0.1mm},\hspace{-0.2mm}n}}}{\rd \PP_{H_{\hspace{-0.1mm}0\hspace{-0.1mm},\hspace{-0.2mm}n}}} \stackrel{\PP_{H_{\hspace{-0.1mm}0\hspace{-0.1mm},\hspace{-0.2mm}n}}}{\scalebox{2}[1.2]{$\rightsquigarrow$}} V.
        \end{equation}
        By the definition of $V$, we have $\EE_{H_{\hspace{-0.1mm}0\hspace{-0.1mm},\hspace{-0.2mm}n}}[V] = 1$. This shows $(iii)$ in Lemma~\ref{lem:Le Cam.first.lemma} with $P_n = \PP_{H_{\hspace{-0.1mm}0\hspace{-0.1mm},\hspace{-0.2mm}n}}$ and $Q_n = \PP_{H_{\hspace{-0.2mm}1\hspace{-0.1mm},\hspace{-0.2mm}n}}$, which implies $\PP_{H_{\hspace{-0.2mm}1\hspace{-0.1mm},\hspace{-0.2mm}n}} \hspace{-0.5mm}\lhd \hspace{0.5mm} \PP_{H_{\hspace{-0.1mm}0\hspace{-0.1mm},\hspace{-0.2mm}n}}$ by $(i)$.
        Define $U \leqdef V$ and note that $\PP_{H_{\hspace{-0.1mm}0\hspace{-0.1mm},\hspace{-0.2mm}n}}(U > 0) = 1$ by definition of $V$.
        This shows $(ii)$ in Lemma~\ref{lem:Le Cam.first.lemma} where the roles of $P_n$ and $Q_n$ have been interchanged, which implies $\PP_{H_{\hspace{-0.1mm}0\hspace{-0.1mm},\hspace{-0.2mm}n}} \hspace{-0.5mm}\lhd \hspace{0.5mm} \PP_{H_{\hspace{-0.2mm}1\hspace{-0.1mm},\hspace{-0.2mm}n}}$ by $(i)$.
        We conclude that the sequences $(\PP_{H_{\hspace{-0.1mm}0\hspace{-0.1mm},\hspace{-0.2mm}n}}, n\in \N)$ and $(\PP_{H_{\hspace{-0.2mm}1\hspace{-0.1mm},\hspace{-0.2mm}n}}, n\in \N)$ are mutually contiguous, which we denote by $\PP_{H_{\hspace{-0.1mm}0\hspace{-0.1mm},\hspace{-0.2mm}n}} \hspace{-0.5mm}\lhd \rhd \hspace{0.5mm} \PP_{H_{\hspace{-0.2mm}1\hspace{-0.1mm},\hspace{-0.2mm}n}}$.
        The conclusion follows from $(iv)$.
    \end{proof}

    \begin{proof}[\bf Proof of Proposition~\ref{prop:asymptotic.normality.vector.d.H1}]
        From the expressions that we found for the two terms on the right-hand side of \eqref{eq:second.order.taylor.ratio.densities.H0} in the proof of Proposition~\ref{prop:iff.convergence.H0.H1n}, we have
        \begin{equation}
            \begin{pmatrix}
                \frac{1}{\sqrt{n}} \sum_{i=1}^n \bb{d}_{\bb{\theta}}(Y_i) \\[2mm]
                \frac{1}{\sqrt{n}} \sum_{i=1}^n \bb{d}_{\bb{\kappa}}(Y_i) \\[2mm]
                \log\left(\frac{\rd \PP_{H_{\hspace{-0.2mm}1\hspace{-0.1mm},\hspace{-0.2mm}n}}}{\rd \PP_{H_{\hspace{-0.1mm}0\hspace{-0.1mm},\hspace{-0.2mm}n}}}\right)
            \end{pmatrix}
            =
            \begin{pmatrix}
                \bb{0}_2 \\[2mm]
                \bb{0}_2 \\[2mm]
                -\frac{1}{2} \bb{\delta}^{\top} \hspace{-0.5mm}J_{\bb{\theta}\bb{\theta}}\, \bb{\delta} + o_{\hspace{0.3mm}\PP_{H_{\hspace{-0.1mm}0\hspace{-0.1mm},\hspace{-0.2mm}n}}}(1)
            \end{pmatrix}
            +
            \begin{pmatrix}
                \frac{1}{\sqrt{n}} \sum_{i=1}^n \bb{d}_{\bb{\theta}}(Y_i) \\[2mm]
                \frac{1}{\sqrt{n}} \sum_{i=1}^n \bb{d}_{\bb{\kappa}}(Y_i) \\[2mm]
                (1 + o(1)) \, \bb{\delta}^{\top} \frac{1}{\sqrt{n}} \sum_{i=1}^n \bb{d}_{\bb{\theta}}(Y_i)
            \end{pmatrix},
        \end{equation}
        where $\bb{0}_2 \leqdef (0,0)^{\top}$.
        By the central limit theorem (see the definition of $J$ in Proposition~\ref{prop:asymptotic.normality.vector.d}), we obtain that, under $H_0$,
        \begin{equation}
            \begin{pmatrix}
                \frac{1}{\sqrt{n}} \sum_{i=1}^n \bb{d}_{\bb{\theta}}(Y_i) \\[2mm]
                \frac{1}{\sqrt{n}} \sum_{i=1}^n \bb{d}_{\bb{\kappa}}(Y_i) \\[2mm]
                \log\left(\frac{\rd \PP_{H_{\hspace{-0.2mm}1\hspace{-0.1mm},\hspace{-0.2mm}n}}}{\rd \PP_{H_{\hspace{-0.1mm}0\hspace{-0.1mm},\hspace{-0.2mm}n}}}\right)
            \end{pmatrix}
            \stackrel{\PP_{H_0}}{\scalebox{2}[1.2]{$\rightsquigarrow$}}
            \mathcal{N}_5\left(
            \begin{pmatrix}
                \bb{0}_2 \\[2mm]
                \bb{0}_2 \\[2mm]
                -\frac{1}{2} \bb{\delta}^{\top} \hspace{-0.5mm}J_{\bb{\theta}\bb{\theta}}\, \bb{\delta}
            \end{pmatrix}
            ,
            \begin{pmatrix}
                J_{\bb{\theta}\bb{\theta}} &J_{\bb{\theta}\bb{\kappa}} &J_{\bb{\theta}\bb{\theta}} \bb{\delta} \\[2mm]
                J_{\bb{\theta}\bb{\kappa}}^{\top} &J_{\bb{\kappa}\bb{\kappa}} &J_{\bb{\theta}\bb{\kappa}}^{\top} \bb{\delta} \\[2mm]
                \bb{\delta}^{\top} J_{\bb{\theta}\bb{\theta}} &\bb{\delta}^{\top} J_{\bb{\theta}\bb{\kappa}} &\bb{\delta}^{\top} J_{\bb{\theta}\bb{\theta}} \bb{\delta}
            \end{pmatrix}
            \right).
        \end{equation}
        Then, by Le Cam's third lemma,
        \begin{equation}
            \begin{pmatrix}
                \frac{1}{\sqrt{n}} \sum_{i=1}^n \bb{d}_{\bb{\theta}}(Y_i) \\[2mm]
                \frac{1}{\sqrt{n}} \sum_{i=1}^n \bb{d}_{\bb{\kappa}}(Y_i)
            \end{pmatrix}
            \stackrel{\PP_{H_{\hspace{-0.2mm}1\hspace{-0.1mm},\hspace{-0.2mm}n}}}{\scalebox{2}[1.2]{$\rightsquigarrow$}}
            \mathcal{N}_4\left(
            \begin{pmatrix}
                J_{\bb{\theta}\bb{\theta}} \bb{\delta} \\[2mm]
                J_{\bb{\theta}\bb{\kappa}}^{\top} \bb{\delta}
            \end{pmatrix}
            ,
            \begin{pmatrix}
                J_{\bb{\theta}\bb{\theta}} &J_{\bb{\theta}\bb{\kappa}} \\[2mm]
                J_{\bb{\theta}\bb{\kappa}}^{\top} &J_{\bb{\kappa}\bb{\kappa}}
            \end{pmatrix}
            \right).
        \end{equation}
        This ends the proof.
    \end{proof}

\subsection{Proof of Corollary~3.11}\label{Proof_local}

    First, we apply Proposition~\ref{prop:iff.convergence.H0.H1n} to obtain a version of Proposition~\ref{lem:K.B.a.s.convergence} that is valid under $H_{\hspace{-0.2mm}1\hspace{-0.1mm},\hspace{-0.2mm}n}$.
    \begin{lemma}
        For $\lambda\geq 1$, we have, as $n\to \infty$,
        \begin{equation}
            K_{\lambda}(\Xn) \xrightarrow{\PP_{H_{\hspace{-0.2mm}1\hspace{-0.1mm},\hspace{-0.2mm}n}}} \frac{\lambda + \log \lambda + \psi(1/\lambda)}{\lambda}, \qquad S_{\lambda}(\Xn) \xrightarrow{\PP_{H_{\hspace{-0.2mm}1\hspace{-0.1mm},\hspace{-0.2mm}n}}} 0,
        \end{equation}
        so that
        \begin{equation}
           K^{\mathrm{net}}_{\lambda}(\Xn) \xrightarrow{\PP_{H_{\hspace{-0.2mm}1\hspace{-0.1mm},\hspace{-0.2mm}n}}} \frac{\lambda + \log \lambda + \psi(1/\lambda)}{\lambda} > 0.
        \end{equation}
    \end{lemma}
    Then, we can apply the same second order Taylor expansion idea we used in the proof of Proposition~\ref{Proof_ZKnetstar} (the proof is virtually identical so we omit the details) to prove that
    \begin{equation}
        \frac{Z^{*\!}(K^{\mathrm{net}}_{\lambda})}{Z^{*\!}(K_{\lambda})}\stackrel{\PP_{H_{\hspace{-0.2mm}1\hspace{-0.1mm},\hspace{-0.2mm}n}}}{\longrightarrow} 1, \quad \text{as } n\to \infty.
    \end{equation}
    Using the proof of Proposition~3.8 in Section~\ref{Proof_ZKnet}, in conjunction again with Proposition~\ref{prop:iff.convergence.H0.H1n}, we also have
    \begin{equation}
        \frac{Z(S_{\lambda})}{Z^{*\!}(S_{\lambda})}\stackrel{\mathrm{a.s.}}{\longrightarrow}1\quad \text{ and } \quad
        \frac{Z(K^{\mathrm{net}}_{\lambda})}{Z^{*\!}(K^{\mathrm{net}}_{\lambda})}\stackrel{\PP_{H_{\hspace{-0.2mm}1\hspace{-0.1mm},\hspace{-0.2mm}n}}}{\longrightarrow}1, \quad \text{as } n\to \infty.
    \end{equation}
    The last part of Corollary~3.11 follows directly using Slutsky's theorem and Theorem~3.10.

\end{appendices}

\section*{References}

% \bibliographystyle{authordate1} % (this does not compile on my computer, hence I had to use the one below)
%\bibliographystyle{jss} % (this is ugly in my opinion)
%\bibliography{Desgagne_Lafaye_Ouimet_2021_APD_bib}

\end{document}